\numberwithin{equation}{section}
\newtheorem{theorem}{Theorem}[section]
\newtheorem{lemma}[theorem]{Lemma}
\newtheorem{corollary}[theorem]{Corollary}
\newtheorem{proposition}[theorem]{Proposition}
\newtheorem{conjecture}[theorem]{Conjecture}
\newtheorem{claim}[theorem]{Claim}
\theoremstyle{definition}
\newtheorem{remark}[theorem]{Remark}
\newtheorem{definition}[theorem]{Definition}
\newtheorem{example}[theorem]{Example}
\theoremstyle{remark}
\newtheorem{notation}{Notation}
\definecolor{dblue}{rgb}{0,0,0.45}
\definecolor{red}{rgb}{0.7,0,0}
\def\1{\textbf{\rm 1}}
\newtheorem{theorem}{Theorem}[section]
\newtheorem{lemma}[theorem]{Lemma}
\newtheorem{proposition}[theorem]{Proposition}
\newtheorem{conjecture}[theorem]{Conjecture}
\newtheorem{corollary}[theorem]{Corollary}
\theoremstyle{definition}
\theoremstyle{remark}
\newtheorem*{remark}{Remark}
\newcommand{\vertiii}[1]{{\left\vert\kern-0.25ex\left\vert\kern-0.25ex\left\vert #1
    \right\vert\kern-0.25ex\right\vert\kern-0.25ex\right\vert}}
\numberwithin{equation}{section}
\def\1{\textbf{\rm 1}}
\def\Xint#1{\mathchoice
{\XXint\displaystyle\textstyle{#1}}%
{\XXint\textstyle\scriptstyle{#1}}%
{\XXint\scriptstyle\scriptscriptstyle{#1}}%
{\XXint\scriptscriptstyle\scriptscriptstyle{#1}}%
\!\int}
\def\XXint#1#2#3{{\setbox0=\hbox{$#1{#2#3}{\int}$}
\vcenter{\hbox{$#2#3$}}\kern-.5\wd0}}
\def\dashint{\Xint-}
\begin{document}

\date{\today}

\author[Jonathan Bennett]{Jonathan Bennett}
\address[Jonathan Bennett]{School of Mathematics, The Watson Building, University of Birmingham, Edgbaston,
Birmingham, B15 2TT, England}\email{j.bennett@bham.ac.uk}
\author[Shohei Nakamura]{Shohei Nakamura}\thanks{This work was partially supported by the European Research Council [grant
number 307617] (Bennett), and by JSPS Grant-in-Aid for JSPS Research Fellow no. 17J01766 and JSPS Overseas Challenge Program for Young Researchers (Nakamura).}
\address[Shohei Nakamura]{Department of Mathematics and Information Sciences, Tokyo Metropolitan University,
1-1 Minami-Ohsawa, Hachioji, Tokyo, 192-0397, Japan}
\email{nakamura-shouhei@ed.tmu.ac.jp}
\keywords{Fourier extension operators, weighted norm inequalities, X-ray tomography}
\subjclass[2010]{42B10, 44A12}

\title{Tomography bounds for the Fourier extension operator and applications}

\begin{abstract}
We explore the extent to which the Fourier transform of an $L^p$ density supported on the sphere in $\mathbb{R}^n$ can have large mass on affine subspaces, placing particular emphasis on lines and hyperplanes. This involves establishing
bounds on quantities of the form $X(|\widehat{gd\sigma}|^2)$ and $\mathcal{R}(|\widehat{gd\sigma}|^2)$, where $X$ and $\mathcal{R}$ denote the X-ray and Radon transforms respectively; here $d\sigma$ denotes Lebesgue measure on the unit sphere $\mathbb{S}^{n-1}$, and $g\in L^p(\mathbb{S}^{n-1})$. We also identify some conjectural bounds of this type that sit between the classical Fourier restriction and Kakeya conjectures. Finally we provide some applications of such tomography bounds to the theory of weighted norm inequalities for $\widehat{gd\sigma}$, establishing some natural variants of conjectures of Stein and Mizohata--Takeuchi from the 1970s. Our approach, which has its origins in work of Planchon and Vega, exploits cancellation via Plancherel's theorem on affine subspaces, avoiding the conventional use of wave-packet and stationary-phase methods.
\end{abstract}

\maketitle

\section{Introduction and statements of results}
The purpose of this paper is to investigate ways in which basic ideas from tomography may be used to further develop our understanding of the Fourier extension operator from euclidean harmonic analysis. We begin this section with a brief introduction to the necessary aspects of the classical theory of the Fourier extension operator (known as restriction theory), and then proceed to present our results. These naturally divide into three parts. The first and second are exploratory, and expose a natural interplay between the Fourier extension operator and the Radon and X-ray transforms 
(Sections 1.2 and 1.3 respectively). The third part (Section 1.4) is driven by the prospect of applications to existing problems in restriction theory, and culminates in some progress on well-known conjectures of Stein and Mizohata--Takeuchi from the 1970s. Our work takes its inspiration from that of Planchon and Vega in \cite{PV}.
\subsection{Background: the Fourier extension operator} A fundamental objective of modern harmonic analysis is to understand the integrability properties of Fourier transforms of densities supported on ``curved" submanifolds of $\mathbb{R}^n$. The primordial example of such a submanifold, and the subject of this paper, is the unit sphere $\mathbb{S}^{n-1}$, which serves as a model for quite general smooth compact submanifolds of nonvanishing gaussian curvature.
Questions of this type are phrased in terms of the \emph{Fourier extension operator} $$g\mapsto\widehat{gd\sigma},$$ where
\begin{equation*}
\widehat{gd\sigma}(x)=\int_{\mathbb{S}^{n-1}}e^{ix\cdot\xi}g(\xi)d\sigma(\xi).
\end{equation*}
Here $d\sigma$ denotes surface measure on $\mathbb{S}^{n-1}$, $x\in\mathbb{R}^n$ and $g\in L^p(\mathbb{R}^n)$ for some $p\geq 1$. The extension operator is sometimes referred to as the \emph{adjoint Fourier restriction operator} since its (formal) adjoint is the mapping $$f\mapsto\widehat{f}\Bigl|_{\mathbb{S}^{n-1}}.$$
The celebrated \emph{restriction conjecture} states that
\begin{equation}\label{restconj}
\|\widehat{gd\sigma}\|_{L^q(\mathbb{R}^n)}\lesssim\|g\|_{L^p(\mathbb{S}^{n-1})}
\end{equation}
whenever
\begin{equation}\label{restpq}\frac{1}{q}<\frac{n-1}{2n}\;\mbox{ and }\; \frac{1}{q}\leq\frac{n-1}{n+1}\frac{1}{p'}.\end{equation}
The restriction conjecture has been verified in dimension $n=2$ (C. Fefferman and Stein \cite{Feff}, \cite{BigStein}; see also Zygmund \cite{Z}), and there has been considerable progress in higher dimensions in recent years (see for example \cite{HR} and \cite{Sto} for further discussion and context).
The necessity of the conditions \eqref{restpq} is straightforward to verify with simple examples. In particular the condition $\frac{1}{q}<\frac{n-1}{2n}$ amounts to the assertion that \eqref{restconj} holds with $g\equiv 1$. This is immediately apparent from the observation that
\begin{equation}\label{statphase}
|\widehat{\sigma}(x)|=\Bigl|\int_{\mathbb{S}^{n-1}}e^{ix\cdot\xi}d\sigma(\xi)\Bigr|\sim (1+|x|)^{-\frac{n-1}2}
\end{equation}
on a large portion of $\mathbb{R}^n$. This well-known bound follows from the method of stationary phase -- see \cite{Watson} or \cite{BigStein} for example. Accordingly, it is also conjectured that an endpoint inequality of the form
\begin{equation}\label{restconjend}
\|\widehat{gd\sigma}\|_{L^{\frac{2n}{n-1}}(B_R)}\lesssim_\varepsilon R^\varepsilon\|g\|_{L^{\frac{2n}{n-1}}(\mathbb{S}^{n-1})}
\end{equation}
holds for all $\varepsilon>0$; here $B_R$ denotes the ball of radius $R$ centred at the origin. It is well-known that \eqref{restconjend} for all $\varepsilon>0$, is equivalent to the restriction conjecture as stated above; see \cite{Tao-Bochner}.

\subsection{Radon transform bounds}\label{sub2}
Naively at least, the example $g\equiv 1$ above suggests that $L^2$ (rather than $L^{\frac{2n}{n-1}}$) is critical if we integrate on \emph{hyperplanes} (rather than the whole of $\mathbb{R}^n$). In other words, it seems natural to seek bounds on the quantities
\begin{equation}\label{compRad}
\mathcal{R}(|\widehat{gd\sigma}|^2)\;\;\;\mbox{ and }\;\;\;\mathcal{R}(\1_R|\widehat{gd\sigma}|^2),
\end{equation}
where $\mathcal{R}$ denotes the \emph{Radon transform},
$$
\mathcal{R}f(\omega,t):=\int_{x\cdot\omega=t}f(x)d\lambda_{\omega,t}(x).
$$
Here $(\omega,t)\in\mathbb{S}^{n-1}\times\mathbb{R}$ and the measure $d\lambda_{\omega,t}(x)=\delta(x\cdot\omega-t)dx$ is Lebesgue measure on the hyperplane $\{x\in\mathbb{R}^n:x\cdot\omega=t\}$.

The quantities \eqref{compRad} turn out to be very natural from other points of view. In particular, elementary considerations reveal that $\mathcal{R}$ is often unable to distinguish between $|\widehat{gd\sigma}|^2$ and $X_0^*(|g|^2)$, where $X_0$ denotes the \emph{restricted X-ray transform}
\begin{equation}\label{X0}
X_0f(\omega)=\int_{\mathbb{R}}f(s\omega)ds; \;\;\omega\in\mathbb{S}^{n-1}.
\end{equation}
It should be noticed that
\begin{equation}\label{exact}
X_0^*f(x)=|x|^{-(n-1)}(f({x}/{|x|})+f(-{x}/{|x|})),
\end{equation}
and so $\mathcal{R}X_0^*f(\omega,t)$ may be infinite unless the support of $f$ is contained in $\{x\in\mathbb{S}^{n-1}:x\cdot\omega\not=0\}$.
\begin{theorem}\label{planch}
For each $\delta\geq 0$, $f\in L^1(\mathbb{S}^{n-1})$ and $\omega\in\mathbb{S}^{n-1}$, let
$$
T_\delta f(\omega)=\int_{\mathbb{S}^{n-1}}\frac{f(x)}{|x\cdot\omega|+\delta}d\sigma(x).$$
Then,
\begin{enumerate}
\item
for any $\omega \in \mathbb{S}^{n-1}$, and $g\in L^2(\mathbb{S}^{n-1})$ supported in $\{x \in \mathbb{S}^{n-1}: x\cdot\omega > 0  \}$,
\begin{equation}\label{id1}
\mathcal{R}(|\widehat{gd\sigma}|^2)(\omega,t) = \mathcal{R}X_0^*(|g|^2)(\omega,t)=T_{0}(|g|^2)(\omega)
\end{equation}
for all $t\not=0$,
and
\item
\begin{equation}\label{id3}
\mathcal{R}(\1_R|\widehat{gd\sigma}|^2)(\omega,t)\lesssim T_{1/R}(|g|^2)(\omega),
\end{equation}
uniformly in $(\omega,t)\in\mathbb{S}^{n-1}\times\mathbb{R}$ and $R>0$.
\end{enumerate}
\end{theorem}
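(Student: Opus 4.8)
The plan is to handle the two parts separately. Part (1) rests on a one–dimensional slice of Plancherel's theorem, in the spirit of Planchon and Vega, together with a change-of-variables computation for $\mathcal{R}X_0^*$; part (2) is a variant of the same Plancherel argument in which the sharp cutoff $\1_R$ is dominated by a Gaussian at scale $R$, at the cost of trading a clean identity for a kernel estimate on the sphere. Throughout, by the rotation-invariance of every quantity involved we may fix $\omega=e_n$, identify the hyperplane $\{x\cdot\omega=t\}$ with $\mathbb{R}^{n-1}$ via $x=(x',t)$, so that $\mathcal{R}F(e_n,t)=\int_{\mathbb{R}^{n-1}}F(x',t)\,dx'$, and write $\xi=(\xi',\xi_n)$ with $\xi_n=\xi\cdot e_n$.

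For part (1), suppose $g$ is supported in the open upper hemisphere $\{\xi_n>0\}$, which we parametrise by the vertical projection $\xi\mapsto\xi'$ onto the open unit ball $B^{n-1}\subset\mathbb{R}^{n-1}$; under this parametrisation $\xi_n=\sqrt{1-|\xi'|^2}$ and $d\sigma=(1-|\xi'|^2)^{-1/2}\,d\xi'$. The slice then satisfies $\widehat{gd\sigma}(x',t)=\int_{\mathbb{R}^{n-1}}e^{ix'\cdot\xi'}G_t(\xi')\,d\xi'$, where
\[
G_t(\xi'):=\1_{B^{n-1}}(\xi')\,e^{it\sqrt{1-|\xi'|^2}}\,(1-|\xi'|^2)^{-1/2}\,g\bigl(\xi',\sqrt{1-|\xi'|^2}\bigr),
\]
whose modulus $|G_t|$ is independent of $t$. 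Plancherel's theorem on $\mathbb{R}^{n-1}$ gives
\[
\mathcal{R}(|\widehat{gd\sigma}|^2)(e_n,t)=\int_{\mathbb{R}^{n-1}}|\widehat{gd\sigma}(x',t)|^2\,dx'=\|G_t\|_{L^2(\mathbb{R}^{n-1})}^2=\int_{\{\xi_n>0\}}\frac{|g(\xi)|^2}{\xi_n}\,d\sigma(\xi),
\]
which is $T_0(|g|^2)(e_n)$ because $g$ vanishes where $\xi\cdot e_n\le0$; observe that this part of the identity in fact holds for every $t$. For the remaining equality, the support hypothesis forces $X_0^*(|g|^2)(x)=|x|^{-(n-1)}|g|^2(\theta_x)$, where $\theta_x\in\{\xi_n>0\}$ is whichever of $\pm x/|x|$ has positive last coordinate; integrating over $\{x_n=t\}$ and changing variables from $x'$ to $\theta_x$ through the solid-angle relation $d\sigma(\theta_x)=|t|\,|x|^{-n}\,dx'$ (valid for $t\neq 0$) turns $\mathcal{R}X_0^*(|g|^2)(e_n,t)$ into $\int |g|^2(\theta)\,(\theta\cdot e_n)^{-1}\,d\sigma(\theta)=T_0(|g|^2)(e_n)$ as well. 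It is precisely here that $t\neq 0$ is needed: on the hyperplane through the origin this change of variables degenerates.

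For part (2) we drop the hemisphere hypothesis. We may assume $R\ge1$ (for $R\lesssim 1$ the bound follows at once from $\|\widehat{gd\sigma}\|_\infty\lesssim\|g\|_{L^2(\mathbb{S}^{n-1})}$ and $T_{1/R}(|g|^2)(e_n)\gtrsim R\|g\|^2_{L^2(\mathbb{S}^{n-1})}$) and $|t|\le R$ (otherwise the left side vanishes). On the slice $\1_R(x',t)\le e^{\pi}\psi_R(x')$ with $\psi_R(x'):=e^{-\pi|x'|^2/R^2}$, so it suffices to bound $\int_{\mathbb{R}^{n-1}}|\widehat{gd\sigma}(x',t)|^2\psi_R(x')\,dx'$. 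Expanding $|\widehat{gd\sigma}(x',t)|^2$ as a double integral over $\mathbb{S}^{n-1}\times\mathbb{S}^{n-1}$ and carrying out the $x'$-integration yields
\[
\int_{\mathbb{R}^{n-1}}|\widehat{gd\sigma}(x',t)|^2\psi_R(x')\,dx'=\iint_{\mathbb{S}^{n-1}\times\mathbb{S}^{n-1}}e^{it(\xi_n-\eta_n)}g(\xi)\overline{g(\eta)}\,\widehat{\psi_R}(\xi'-\eta')\,d\sigma(\xi)\,d\sigma(\eta),
\]
where $\widehat{\psi_R}(\zeta)=R^{n-1}e^{-R^2|\zeta|^2/(4\pi)}$ is nonnegative, even, of size $\sim R^{n-1}$, and concentrated in $\{|\zeta|\lesssim 1/R\}$. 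Bounding the modulus of the exponential by $1$, applying $2|g(\xi)||g(\eta)|\le|g(\xi)|^2+|g(\eta)|^2$, and using the symmetry of $\widehat{\psi_R}(\xi'-\eta')$ in $\xi\leftrightarrow\eta$, the whole expression is dominated by $\int_{\mathbb{S}^{n-1}}|g(\xi)|^2\,\bigl(\int_{\mathbb{S}^{n-1}}\widehat{\psi_R}(\xi'-\eta')\,d\sigma(\eta)\bigr)\,d\sigma(\xi)$, so the proof reduces to the pointwise kernel bound
\[
\int_{\mathbb{S}^{n-1}}\widehat{\psi_R}(\xi'-\eta')\,d\sigma(\eta)\lesssim\frac{1}{|\xi\cdot e_n|+1/R}\qquad\text{for all }\xi\in\mathbb{S}^{n-1},
\]
after which the estimate $\lesssim T_{1/R}(|g|^2)(e_n)$ is immediate.

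The kernel bound is the real content, and I expect it to be the main obstacle. I would prove it by pushing $d\sigma$ forward under $\eta\mapsto\eta'$ (it becomes $2(1-|\eta'|^2)^{-1/2}\,d\eta'$ on $B^{n-1}$), decomposing dyadically according to $|\xi'-\eta'|\sim 2^j/R$, and using the rapid decay $\widehat{\psi_R}(\zeta)\lesssim_N R^{n-1}(1+R|\zeta|)^{-N}$; this reduces matters to the elementary spherical-slab estimate
\[
\sigma\bigl(\{\eta\in\mathbb{S}^{n-1}:|\eta'-\xi'|\le\rho\}\bigr)\lesssim\frac{\rho^{n-1}}{|\xi\cdot e_n|+\sqrt{\rho}}\qquad(0<\rho\le1),
\]
which one verifies by distinguishing the case where the vertical slab $\{|\eta'-\xi'|\le\rho\}$ stays a distance $\gtrsim\rho$ from the equator $\{\eta_n=0\}$ — there the weight $(1-|\eta'|^2)^{-1/2}$ is comparable to $|\xi\cdot e_n|^{-1}$ and the slab has $(n-1)$–measure $\sim\rho^{n-1}$ — from the case where it meets an $O(\rho)$–neighbourhood of the equator, where one simply integrates the integrable singularity of the weight directly. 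Summing the geometric series in $j$, the term $j=0$ dominates and produces $(|\xi\cdot e_n|+1/\sqrt R)^{-1}\le(|\xi\cdot e_n|+1/R)^{-1}$ (valid since $R\ge1$), while all $j\ge1$ are absorbed by the decay. Conceptually, this regularisation — the scale-$1/R$ truncation replacing the a priori non-integrable weight $|\xi\cdot\omega|^{-1}$ of part (1) by $(|\xi\cdot\omega|+1/R)^{-1}$ — is the heart of the matter, and formally part (1) is the $R\to\infty$ limit of part (2) restricted to a hemisphere.
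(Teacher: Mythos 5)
Your proposal is correct, and it follows the paper's strategy in its essentials: fix $\omega=e_n$, view $\widehat{gd\sigma}(x',t)$ as an $(n-1)$-dimensional Fourier transform and apply Plancherel on the slice for part (1) (your radial-projection Jacobian computation of $\mathcal{R}X_0^*(|g|^2)$ replaces the paper's polar-coordinate/delta-function calculation, but is the same elementary change of variables), and for part (2) majorize the truncation by a mollifier at scale $1/R$ and reduce to a kernel bound of the form $(|\xi\cdot e_n|+1/R)^{-1}$ on the sphere. The execution of part (2) differs in detail: the paper dominates $\1_{B_R}|\widehat{gd\sigma}|^2$ by $|\widehat{\Phi_R*(gd\sigma)}|^2$ with a product mollifier whose Fourier transform equals $1$ on $B_R$, applies Plancherel in $x'$, and then uses Cauchy--Schwarz with the weight $(1-|\eta'|^2)^{-1/2}$ plus Fubini, quoting the convolution bound $|\Psi_R|*|\Psi_R|*[(1-|\cdot|^2)^{-1/2}](\xi')\lesssim((1-|\xi'|^2)^{1/2}+R^{-1})^{-1}$ as an elementary computation; you instead dominate the cutoff on the slice by a Gaussian in $x'$ alone, expand the square, integrate out $x'$ to produce the positive kernel $\widehat{\psi_R}(\xi'-\eta')$, symmetrize by AM--GM, and prove the kernel bound yourself via a dyadic decomposition and the spherical slab estimate $\sigma(\{|\eta'-\xi'|\le\rho\})\lesssim\rho^{n-1}(|\xi\cdot e_n|+\sqrt{\rho})^{-1}$ (which is correct, as are your reductions to $R\ge1$ and $|t|\le R$). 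This buys you a self-contained proof of the kernel estimate, and in fact the slightly sharper bound $(|\xi\cdot e_n|+R^{-1/2})^{-1}$, which you correctly relax to $(|\xi\cdot e_n|+R^{-1})^{-1}$ to conclude \eqref{id3}; the paper's route avoids the slab analysis at the cost of leaving its kernel computation unproved in the text.
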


By symmetry, the identity \eqref{id1} also holds for $g$ supported in the ``lower" hemisphere $\{x \in \mathbb{S}^{n-1}: x\cdot\omega < 0  \}$.
We remark that the operator $T_0$ appearing in Theorem \ref{planch} is a variant of the spherical Radon (also known as Funk) transform
\begin{equation}\label{MCop0}
A_0f(\omega)=\int_{\mathbb{S}^{n-1}}f(x)\delta(x\cdot\omega)d\sigma(x).
\end{equation}
However, $T_0$ is more singular than $A_0$ from certain points of view. For example, $T_01$ is identically infinite, recalling the need for some care in interpreting \eqref{id1}. As a result, no Lebesgue space bounds on $\mathcal{R}(|\widehat{gd\sigma}|^2)$ are possible. As a substitute, we have the following near-uniform bounds on $\mathcal{R}(\1_R|\widehat{gd\sigma}|^2)$:
\begin{theorem}\label{t:main}
If
\begin{equation}\label{expoRad}
\ p\ge2,\ \frac{n-2}{2} + \frac1{2q} \ge \frac{n-1}{p},\ \frac{n-1}{q} \ge \frac{2}{p},
\end{equation}
then
\begin{equation}\label{e:radon-extension}
\big\| \mathcal{R}(1_{B_R}|\widehat{gd\sigma}|^2) \big\|_{L^q_\omega L^\infty_t} \lesssim \log(R) \| g \|_{L^p(\mathbb{S}^{n-1})}^2
\end{equation}
for all $R>0$.
\end{theorem}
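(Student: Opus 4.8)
The plan is to derive Theorem~\ref{t:main} from the pointwise bound \eqref{id3} of Theorem~\ref{planch}. Since the right-hand side of \eqref{id3} does not depend on $t$, applying it with $\delta=1/R$ gives
\[
\big\|\mathcal{R}(\1_{B_R}|\widehat{gd\sigma}|^2)\big\|_{L^q_\omega L^\infty_t}\;\lesssim\;\big\|T_{1/R}(|g|^2)\big\|_{L^q(\mathbb{S}^{n-1})},
\]
so, using $\|g\|_{L^p(\mathbb{S}^{n-1})}^2=\big\||g|^2\big\|_{L^{p/2}(\mathbb{S}^{n-1})}$, the theorem reduces to the fixed-parameter estimate
\[
\|T_\delta h\|_{L^q(\mathbb{S}^{n-1})}\;\lesssim\;\log(1/\delta)\,\|h\|_{L^{r}(\mathbb{S}^{n-1})},\qquad r:=p/2,
\]
uniformly in $h\ge 0$ and in small $\delta>0$ (bounded $R$ being immediate, as the kernel of $T_{1/R}$ is then $O(1)$). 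Writing $(u,v)=(1/r,1/q)$, a short computation identifies the conditions \eqref{expoRad} with the requirement that $(u,v)$ lie in the closed quadrilateral $Q$ with vertices $(0,0)$, $\big(\tfrac{n-1}{n},\tfrac1n\big)$, $(1,1)$, $(0,1)$: for $v\le 1/n$ the binding constraint is $u\le(n-1)v$, and for $v\ge 1/n$ it is $u\le\tfrac{n-2+v}{n-1}$. By Riesz--Thorin interpolation (triangulating $Q$), it then suffices to establish the displayed bound, with constant $\lesssim\log(1/\delta)$, at the four vertices of $Q$.

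Three of these are elementary. The kernel $K_\delta(\omega,x)=(|x\cdot\omega|+\delta)^{-1}$ is symmetric in $(\omega,x)$, and its singular set $\{x\cdot\omega=0\}$ has codimension one, with $|x\cdot\omega|$ comparable to the distance to it; integrating in the transverse variable gives
\[
\sup_{\omega\in\mathbb{S}^{n-1}}\int_{\mathbb{S}^{n-1}}\frac{d\sigma(x)}{|x\cdot\omega|+\delta}\;\lesssim\;\int_{-1}^{1}\frac{ds}{|s|+\delta}\;\lesssim\;\log(1/\delta),
\]
so Schur's test yields $\|T_\delta\|_{L^1\to L^1}=\|T_\delta\|_{L^\infty\to L^\infty}\lesssim\log(1/\delta)$ (the vertices $(1,1)$ and $(0,0)$), and hence $\|T_\delta\|_{L^\infty\to L^1}\lesssim\log(1/\delta)$ (the vertex $(0,1)$) since $\mathbb{S}^{n-1}$ has finite measure. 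When $n=2$ the remaining vertex $\big(\tfrac{n-1}{n},\tfrac1n\big)=(\tfrac12,\tfrac12)$ also lies on the diagonal and is therefore already covered, so the proof is complete in that case.

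The substance of the argument is thus the single off-diagonal endpoint
\[
\|T_\delta\|_{L^{n'}(\mathbb{S}^{n-1})\to L^{n}(\mathbb{S}^{n-1})}\;\lesssim\;\log(1/\delta),
\]
which is genuinely $L^p$-improving exactly when $n\ge3$. To prove it I would exploit that, apart from the regularisation, $T_\delta$ is a fractional form of the spherical (Funk) Radon transform: from $\frac{1}{|x\cdot\omega|+\delta}\sim\int_{\delta}^{2}\1_{\{|x\cdot\omega|\le\mu\}}\,\frac{d\mu}{\mu^{2}}$ one obtains $T_\delta\lesssim\int_\delta^2 \mu^{-2}A_\mu\,d\mu$, where $A_\mu h(\omega)=\int_{\{|x\cdot\omega|\le\mu\}}h\,d\sigma$ averages $h$ over a $\mu$-slab about the equator of $\omega$. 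The curvature of $\mathbb{S}^{n-1}$ -- equivalently the transversality of these slabs as $\omega$ ranges -- then enters through an $L^2$ estimate of the type central to this paper (Plancherel on the equatorial spheres $\{x\cdot\omega=\mathrm{const}\}$): the goal is an $L^{n'}\to L^n$ bound for $\int_\delta^2 \mu^{-2}A_\mu\,d\mu$ in which the only logarithmic growth is that of the outer $\mu$-integral, yielding a constant $\lesssim\log(1/\delta)$ rather than a power of $1/\delta$. Interpolating the resulting $L^2$-based inequality against the trivial $L^1\to L^1$ and $L^\infty\to L^\infty$ bounds then gives the endpoint.

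The main obstacle is precisely this last step: extracting, from the $L^2$/Plancherel analysis of the singular fractional Radon transform $T_\delta$, an $L^{n'}\to L^n$ bound with the sharp \emph{logarithmic} (rather than polynomial) dependence on $1/\delta$. A term-by-term treatment of the slab decomposition is too crude, since the operators $\mu^{-1}A_\mu$ are not uniformly bounded from $L^{n'}$ to $L^n$; the gain must come from the cancellation present in the $L^2$ argument. Two further, routine, points to verify are the identification of \eqref{expoRad} with the quadrilateral $Q$, and that interpolation across $Q$ does not degrade the $\log(1/\delta)$ constant.
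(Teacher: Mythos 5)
Your reduction is exactly the one the paper uses: \eqref{id3} with $\delta=1/R$, the identification of \eqref{expoRad} with the region \eqref{expoMike} for the exponent pair $(p/2,q)$, the trivial Schur-test bounds on the diagonal and at $(0,1)$, and interpolation down to the single off-diagonal vertex $L^{n/(n-1)}(\mathbb{S}^{n-1})\to L^{n}(\mathbb{S}^{n-1})$ (this is Proposition \ref{mainprop} in the paper). The problem is that this vertex is precisely where all the content lies, and you do not prove it: you only sketch a plan and explicitly flag it as the "main obstacle". As it stands the proposal is therefore incomplete for every $n\ge 3$.

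Moreover, your diagnosis of that obstacle is mistaken. You claim that a term-by-term treatment must fail because the normalised slab averages $\mu^{-1}A_\mu$ (with $A_\mu$ averaging over $\{|x\cdot\omega|\le\mu\}$) are not uniformly bounded from $L^{n'}$ to $L^{n}$, so that some cancellation is needed. In fact $T_\delta$ is a positive operator, so no cancellation can help, and none is needed: the paper writes
\begin{equation*}
T_\delta f(\omega)=\int_{-1}^{1}A_t f(\omega)\,\frac{dt}{|t|+\delta},
\end{equation*}
where $A_t$ is the slice (Funk-type) transform \eqref{MCopt}, and proves the uniform estimate $\|A_t f\|_{L^n(\mathbb{S}^{n-1})}\lesssim\|f\|_{L^{n/(n-1)}(\mathbb{S}^{n-1})}$ for $|t|$ small (Lemma \ref{Chrot}); this is Christ's $t=0$ estimate made stable in $t$ via the rotational curvature condition and the averaging-operator bound of Lemma \ref{l:taovargasvega}. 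Minkowski's inequality in $t$ then gives the $\log(1/\delta)$ directly, and the uniform slice bound also shows (again by Minkowski) that your $\mu^{-1}A_\mu$ \emph{are} uniformly $L^{n'}\to L^{n}$ bounded for small $\mu$, contrary to your claim. So the missing ingredient in your argument is exactly this uniform Funk-transform estimate; supplying it (e.g.\ by the rotational-curvature argument, or by citing and perturbing Christ's endpoint result) is what turns your outline into a proof, and your proposed detour through an $L^2$/cancellation mechanism is not the right route.
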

Several remarks are in order. Firstly, the $L^\infty$ norm in $t$ is necessary, as may be seen quickly by considering the case $g\equiv 1$. This is closely related to the simple observation that $\mathcal{R}(|\widehat{gd\sigma}|^2)(\omega,t)$ is independent of $t$ for certain $g$ -- see Theorem \ref{planch}. Secondly, the range of exponents in \eqref{expoRad} is best-possible in the sense that the logarithmic growth must be replaced with power growth outside of this range.
Finally, the power of the logarithm in \eqref{e:radon-extension} is also best-possible. Our proof of Theorem \ref{t:main} will follow from \eqref{id3} combined with sharp bounds on the operator $T_\delta$. As may be expected given the logarithmic growth in $R$, these bounds on $T_\delta$ will follow from uniform bounds on the ``uncentred" spherical Radon transforms
\begin{equation}\label{MCopt}
A_tf(\omega)=\int_{\mathbb{S}^{n-1}}f(x)d\sigma_{\omega,t}(x)
\end{equation}
for small $t$; here $d\sigma_{\omega,t}(x)=\delta(x\cdot\omega-t)d\sigma(x)$. Several Lebesgue space estimates for these operators were considered by Christ in \cite{Christ84}, and our proof of Theorem \ref{t:main} involves only modest additions to his results.

It should be remarked that Theorem \ref{t:main} contains
Lebesgue space bounds on the composition \eqref{compRad} that are well beyond the scope of the restriction conjecture and possible estimates for the Radon transform -- the clearest example being the case $p=q=\infty$. We also note that for $n>2$, Theorem \ref{t:main} has as an endpoint the inequality
\begin{equation}\label{e:EndRadon}
\big\| \mathcal{R}(1_{B_R}|\widehat{gd\sigma}|^2) \big\|_{L^n_\omega L^\infty_t} \lesssim \log(R) \| g \|_{L^{\frac{2n}{n-1}}(\mathbb{S}^{n-1})}^2,
\end{equation}
which would follow (up to a factor of $R^\varepsilon$) from the conjectured endpoint restriction inequality \eqref{restconjend}, combined with a (missing) endpoint estimate for the Radon transform (see \cite{Oberlin-Stein}). Such ``improvements" are to be expected as the composition $\mathcal{R}X_0^*$ is much less singular than either of its factors.

The estimate \eqref{e:EndRadon} provides us with an opportunity to draw attention to the potential for ideas from tomography to be effective in addressing existing problems in restriction theory. By the inversion formula for the Radon transform, $f=c_n(-\Delta)^{\frac{n-1}{2}}\mathcal{R}^*\mathcal{R}f$, which holds for a suitably regular function $f$ on $\mathbb{R}^n$ and constant $c_n$, we may write
$$
|\widehat{gd\sigma}|^2\gamma_R=c_n(-\Delta)^{\frac{n-1}{2}}\mathcal{R}^*(\mathcal{R}(|\widehat{gd\sigma}|^2\gamma_R)),
$$
where $\gamma_R$ is a smooth bump function adapted to $B_R$. Hence by \eqref{e:EndRadon}, the restriction conjecture \eqref{restconjend} would follow if we knew that $(-\Delta)^{\frac{n-1}{2}}\mathcal{R}^*: L^n_\omega L^\infty_t\rightarrow L^{\frac{n}{n-1}}(B_R)$, with bound at most $O(R^\varepsilon)$. Unsurprisingly this is easily seen to not be the case in any dimension. However, there are precedents for this sort of approach to problems in the wider restriction theory -- see the forthcoming Section \ref{sub4} for further discussion and applications.

\subsection{X-ray transform bounds}\label{sub3}
As we have discussed, our motivation for considering integrals of $|\widehat{gd\sigma}|^2$ on hyperplanes comes from integrability considerations relating to examples that generate the conditions \eqref{restpq}. Of course the exponent $2$ ceases to be critical in this regard if we instead consider integrals on \emph{lines}. If one is prepared to sacrifice the obvious advantages of $L^2$ line integrals, one is naturally led to
look for bounds on
\begin{equation}\label{compX}
X(|\widehat{gd\sigma}|^{\frac{2}{n-1}})\;\;\;\mbox{ or }\;\;\;X(\1_R|\widehat{gd\sigma}|^{\frac{2}{n-1}}),
\end{equation}
where $X$ denotes the \textit{X-ray transform}
\begin{equation}\label{Xray}
Xf(\omega,v)=\int_{\mathbb{R}}f(v+s\omega)ds.
\end{equation}
Here $\omega\in\mathbb{S}^{n-1}$ and $v\in\langle\omega\rangle^\perp$ parametrise the manifold $\mathcal{M}_{1,n}$ of all doubly-infinite lines in $\mathbb{R}^n$ in the natural way.
In this setting there is a close conjectural analogue of the endpoint estimate \eqref{e:EndRadon} that sits between the restriction and Kakeya conjectures; see Section \ref{AP1} for a statement of the latter.
\begin{conjecture}\label{conj:Xrest}
For every $\varepsilon>0$ there is a constant $C_\varepsilon<\infty$ such that
\begin{equation}\label{e:2/n-1}
\big\| X( \1_{B_R} |\widehat{gd\sigma}|^{\frac{2}{n-1}} ) \big\|_{L^n_\omega L^\infty_v} \leq C_\varepsilon R^\varepsilon \|g\|_{L^{\frac{2n}{n-1}}(\mathbb{S}^{n-1})}^{\frac{2}{n-1}}
\end{equation}
for all $R>0$.
\end{conjecture}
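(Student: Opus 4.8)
Since \eqref{e:2/n-1} is a conjecture sitting, as we will discuss, between the restriction conjecture \eqref{restconjend} and the Kakeya conjecture, the realistic aim is not an unconditional proof but a scheme parallel to the proof of Theorem \ref{t:main}, together with an identification of the one ingredient that lies out of reach. The first move is to split the segment $\ell\cap B_R$, where $\ell=\{v+s\omega:s\in\mathbb{R}\}$, into the dyadic annular pieces $\{x\in\mathbb{R}^n:|x|\sim\rho\}$ with $1\le\rho\le R$, together with a bounded piece near the origin. On each annulus the relevant part of $\ell$ has one-dimensional measure $O(\rho)$, so Hölder in $s$ with exponent $n-1$ gives
\begin{equation*}
X\big(\1_{\{|x|\sim\rho\}}|\widehat{gd\sigma}|^{\frac{2}{n-1}}\big)(\omega,v)\lesssim \rho^{\frac{n-2}{n-1}}\Big(X\big(\1_{B_{2\rho}}|\widehat{gd\sigma}|^2\big)(\omega,v)\Big)^{\frac{1}{n-1}}.
\end{equation*}
This reduces \eqref{e:2/n-1} to uniform control of the \emph{localised $L^2$ X-ray transform} $X(\1_{B_{2\rho}}|\widehat{gd\sigma}|^2)$, the line analogue of the quantities \eqref{compRad} treated in Theorem \ref{planch}.

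For that localised $L^2$ quantity I would run the Planchon--Vega argument of \cite{PV} on the line $\ell$ rather than on a hyperplane. Writing $\widehat{gd\sigma}(v+s\omega)=\int_{\mathbb{S}^{n-1}}e^{is(\omega\cdot\xi)}g_v(\xi)\,d\sigma(\xi)$ with $g_v(\xi)=e^{iv\cdot\xi}g(\xi)$, one recognises the right-hand side as a one-dimensional Fourier transform in $s$ of the push-forward of $g_v\,d\sigma$ under $\xi\mapsto\omega\cdot\xi$; Plancherel in $s$ then yields
\begin{equation*}
\int_{\mathbb{R}}|\widehat{gd\sigma}(v+s\omega)|^2\,ds = c\int_{\mathbb{R}}\big|A_u g_v(\omega)\big|^2\,du
\end{equation*}
for an absolute constant $c$, where $A_u$ is the uncentred spherical Radon transform \eqref{MCopt} and $g_v$ carries the modulation $e^{iv\cdot\xi}$. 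Inserting the cutoff $\1_{B_{2\rho}}$ (which, when $|v|\le 2\rho$, amounts to $\1_{[-2\rho,2\rho]}$ in the variable $s$) thickens $A_u$ by $\sim 1/\rho$ --- precisely the mechanism behind \eqref{id3} --- and the stationary-phase size \eqref{statphase} forces the gain $\rho^{-(n-2)}$ in the resulting $L^2_u$ integral for the extremal profile $g\equiv1$, so that together with the Hölder factor $\rho^{\frac{n-2}{n-1}}$ each dyadic piece contributes $O(1)$. Taking $L^n_\omega L^\infty_v$ norms, feeding in Christ-type bounds \cite{Christ84} for the thickened, modulated operators $A_u$ exactly as in the proof of Theorem \ref{t:main}, and summing the $\lesssim\log R$ dyadic pieces would then give \eqref{e:2/n-1}, in fact with $\log R$ in place of $R^\varepsilon$.

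The main obstacle is this last step: one needs uniform-in-$v$ control, in the mixed norm $L^n_\omega$, of the modulated operators $g\mapsto A_u g_v$ at the critical exponent $p=\tfrac{2n}{n-1}$, and the supremum over $v\in\langle\omega\rangle^\perp$ turns this into an X-ray \emph{maximal} estimate of Kakeya strength. Indeed, averaging \eqref{e:2/n-1} over a family of parallel lines through a Besicovitch set shows that \eqref{e:2/n-1} implies the Kakeya conjecture (Section \ref{AP1}), while, in the other direction, since $|\widehat{gd\sigma}|^{2/(n-1)}$ is essentially constant at unit scale the required X-ray maximal bound collapses to the Kakeya maximal function and hence follows from \eqref{restconjend} --- the same phenomenon by which \eqref{restconjend} yields \eqref{e:EndRadon} modulo an endpoint transform estimate. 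For this reason no unconditional proof is to be expected, and the attainable targets for the method are: (i) the case $n=2$, where $\tfrac{2}{n-1}=2$ and lines are hyperplanes, so \eqref{e:2/n-1} already follows --- with $\log R$ --- from Theorem \ref{t:main} with $p=4$, $q=2$; (ii) the variant of \eqref{e:2/n-1} carrying $\|g\|_{L^2(\mathbb{S}^{n-1})}^{2/(n-1)}$ on the right-hand side, for which Christ's bounds on $A_u$ are already strong enough to close the argument above; and (iii) any restriction-type input, which the scheme converts into a conditional bound. In one phrase: the method reduces \eqref{e:2/n-1} to a sharp $L^n_\omega L^\infty_v$ bound for modulated uncentred spherical Radon transforms, and that bound is no easier than Kakeya.
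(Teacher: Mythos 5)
The statement you were asked to prove is a \emph{conjecture}: the paper offers no proof of \eqref{e:2/n-1}, and indeed Proposition \ref{prop:implication} shows that it implies the Kakeya maximal conjecture, so no unconditional proof is expected. Your proposal correctly recognises this, and much of your scheme mirrors material that the paper does contain elsewhere: the line-Plancherel identity you invoke is essentially Theorem \ref{planchX}; your observation (i), that for $n=2$ the conjecture holds with $\log R$ in place of $R^\varepsilon$ via Theorem \ref{t:main} with $(p,q)=(4,2)$, is correct since $X=\mathcal{R}$ in the plane; and your closing diagnosis --- that after the dyadic/H\"older reduction the surviving estimate is of restriction/Kakeya strength --- matches the paper's framing of \eqref{e:2/n-1} as sitting strictly between the two conjectures. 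One caveat on your remark that the required bound ``collapses to the Kakeya maximal function'': for $n>3$ the exponent $\tfrac{2}{n-1}<1$ makes this mollification non-routine (Minkowski's inequality is unavailable), which is exactly why the paper introduces the Poisson-kernel device of Lemma \ref{l:KakeXray}.

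Two concrete problems remain. First, your claim (ii) --- that the variant of \eqref{e:2/n-1} with $\|g\|_{L^2(\mathbb{S}^{n-1})}^{2/(n-1)}$ on the right closes using Christ-type bounds --- is false. Take $g$ the indicator of a $\delta$-cap and $R=\delta^{-2}$: then $|\widehat{gd\sigma}|^{2/(n-1)}\sim\delta^{2}$ on the dual tube of length $\delta^{-2}$ and radius $\delta^{-1}$, so for the set of directions $\omega$ within angle $\sim\delta$ of the tube axis (measure $\sim\delta^{n-1}$) there is a line along which the integral is $\gtrsim\delta^{2}\cdot\delta^{-2}=1$; hence the left-hand side is $\gtrsim\delta^{(n-1)/n}$, while $\|g\|_{L^2}^{2/(n-1)}\sim\delta$, and $\delta^{(n-1)/n}\gg\delta$. (Relatedly, running your own scheme with the crude bound $\sup_vX(|\widehat{gd\sigma}|^2)\lesssim\|g\|_{2}^{2}$ from Section \ref{Sec6} and summing the dyadic pieces yields the power $R^{(n-2)/(n-1)}$ for $n\geq3$, not $R^\varepsilon$.) Second, your assertion that ``each dyadic piece contributes $O(1)$'' is verified only for $g\equiv1$; what the H\"older step actually demands is an estimate of the shape $\|X(\1_{B_\rho}|\widehat{gd\sigma}|^2)\|_{L^{n/(n-1)}_\omega L^\infty_v}\lesssim\rho^{-(n-2)}\|g\|_{L^{2n/(n-1)}}^{2}$, which, as you yourself concede in the final paragraph, is no easier than the conjecture. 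So the proposal is not a proof --- necessarily, given Proposition \ref{prop:implication} --- and as a reduction-plus-obstruction narrative it is broadly in the spirit of the paper, but claim (ii) should be deleted.
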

\begin{proposition}\label{prop:implication}
$$
{\rm Restriction\; Conjecture} \;\Rightarrow \;{\rm Conjecture\; \ref{conj:Xrest} }\;\Rightarrow \;{\rm Kakeya\; Maximal\; Conjecture}.
$$
\end{proposition}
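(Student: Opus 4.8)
The plan is to prove the two implications separately, bridging the ``extension side'' and the ``tomography side'' in each case via the elementary fact that $\widehat{gd\sigma}$ is band-limited to the unit ball, so that $|\widehat{gd\sigma}|$, and hence $|\widehat{gd\sigma}|^{\frac{2}{n-1}}$, is locally constant at unit scale up to a rapidly decaying error. The bridge I would use is the following: if $H\ge 0$ is locally constant at a scale $\rho$ and supported in a ball of radius $O(R)$, then $X(H)(\omega,v)$, being a line integral of $H$, is comparable to $\rho^{-(n-1)}$ times the integral of $H$ over the radius-$\rho$ tube about the line $(\omega,v)$; taking the supremum in $v$ and rescaling space by $R$ then shows that $\|X(H)\|_{L^n_\omega L^\infty_v}$ is comparable to $R$ times the $L^n(\mathbb{S}^{n-1})$-norm of the Kakeya maximal function, at aperture $\rho/R$, of the rescaled profile $H(R\,\cdot\,)$. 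In both implications this is applied with $\rho/R=\delta$.

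\emph{Restriction $\Rightarrow$ Conjecture \ref{conj:Xrest}.} Write $F=|\widehat{gd\sigma}|^{\frac{2}{n-1}}$, which is (essentially) locally constant at unit scale. Since $\|F\|_{L^n(B_R)}=\|\widehat{gd\sigma}\|_{L^{\frac{2n}{n-1}}(B_R)}^{\frac{2}{n-1}}$, the endpoint restriction estimate \eqref{restconjend} gives $\|F\|_{L^n(B_R)}\lesssim_\varepsilon R^\varepsilon\|g\|_{L^{\frac{2n}{n-1}}(\mathbb{S}^{n-1})}^{\frac{2}{n-1}}$. By the bridge with $\rho=1$ (aperture $\delta=1/R$), $\|X(\1_{B_R}F)\|_{L^n_\omega L^\infty_v}\sim R\|(F(R\,\cdot\,))^*_{1/R}\|_{L^n(\mathbb{S}^{n-1})}$, and since the restriction conjecture is known to imply the Kakeya maximal conjecture, the latter norm is $\lesssim_\varepsilon R^\varepsilon\|F(R\,\cdot\,)\|_{L^n(\mathbb{R}^n)}=R^{\varepsilon-1}\|F\|_{L^n(B_R)}$. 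Combining the two estimates yields $\|X(\1_{B_R}F)\|_{L^n_\omega L^\infty_v}\lesssim_\varepsilon R^{2\varepsilon}\|g\|_{L^{\frac{2n}{n-1}}(\mathbb{S}^{n-1})}^{\frac{2}{n-1}}$, i.e.\ \eqref{e:2/n-1}. Thus this direction says exactly that Conjecture \ref{conj:Xrest} is the composition of a restriction estimate (fixing the normalization) with a Kakeya maximal estimate (controlling the $X$-ray integral).

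\emph{Conjecture \ref{conj:Xrest} $\Rightarrow$ Kakeya maximal.} It suffices to prove the dual form, namely $\|\sum_\nu\1_{T_\nu}\|_{L^{n/(n-1)}(\mathbb{R}^n)}\lesssim_\varepsilon\delta^{-\varepsilon}\big(\sum_\nu|T_\nu|\big)^{\frac{n-1}{n}}$ for every family of $\delta$-tubes $\{T_\nu\}\subset B_2$ with $\delta$-separated directions $\{\omega_\nu\}$ (one tube per direction, by a routine dyadic reduction). Put $W=\sum_\nu\1_{T_\nu}$. First comes an elementary redistribution: writing $W^{\frac{n}{n-1}}=W\cdot W^{\frac{1}{n-1}}$,
\[
\|W\|_{L^{n/(n-1)}}^{\frac{n}{n-1}}=\sum_\nu\int_{T_\nu}W^{\frac{1}{n-1}}\le\sum_\nu|T_\nu|\,\big(W^{\frac{1}{n-1}}\big)^*_\delta(\omega_\nu)\lesssim\big\|\big(W^{\frac{1}{n-1}}\big)^*_\delta\big\|_{L^n(\mathbb{S}^{n-1})}\Big(\sum_\nu|T_\nu|\Big)^{\frac{n-1}{n}},
\]
the last step using that the Kakeya maximal function is roughly constant on $\delta$-caps. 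So it is enough to bound $\|(W^{\frac{1}{n-1}})^*_\delta\|_{L^n(\mathbb{S}^{n-1})}$ by $\delta^{-\varepsilon}(\sum_\nu|T_\nu|)^{1/n}$. For this I would use a randomized Knapp example at scale $R=\delta^{-2}$: let $\widetilde T_\nu=R T_\nu$ (a Knapp tube of radius $\delta^{-1}$ and length $\delta^{-2}$, lying in $B_{2R}$), let $\theta_\nu\subset\mathbb{S}^{n-1}$ be the $\delta$-cap centred at $\omega_\nu$, choose modulations $a_\nu$ so that $|\widehat{g_\nu d\sigma}|\gtrsim\delta^{n-1}$ on $\widetilde T_\nu$ for $g_\nu=e^{-ia_\nu\cdot\xi}\1_{\theta_\nu}$, and set $g=\sum_\nu\epsilon_\nu g_\nu$ with random signs $\epsilon_\nu$. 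Then $\|g\|_{L^{\frac{2n}{n-1}}(\mathbb{S}^{n-1})}^{\frac{2}{n-1}}\sim(\sum_\nu\sigma(\theta_\nu))^{1/n}$ independently of the signs (bounded overlap of the $\theta_\nu$), while Khintchine's inequality gives $\mathbb{E}_\epsilon|\widehat{gd\sigma}(x)|^{\frac{2}{n-1}}\gtrsim\big(\sum_\nu|\widehat{g_\nu d\sigma}(x)|^2\big)^{\frac{1}{n-1}}\gtrsim\delta^2\,\widetilde W(x)^{\frac{1}{n-1}}$, where $\widetilde W=\sum_\nu\1_{\widetilde T_\nu}$. Applying $X(\1_{B_{2R}}\cdot)$, taking the $L^n_\omega L^\infty_v$-norm, using convexity to pass $\mathbb{E}_\epsilon$ inside, and then picking one sign pattern at least as good as the average, Conjecture \ref{conj:Xrest} gives $\delta^2\|X(\1_{B_{2R}}\widetilde W^{\frac{1}{n-1}})\|_{L^n_\omega L^\infty_v}\lesssim_\varepsilon\delta^{-2\varepsilon}(\sum_\nu\sigma(\theta_\nu))^{1/n}$. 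Finally the bridge with $\rho=\delta^{-1}$ (aperture $\delta$; note $\widetilde W(R\,\cdot\,)^{\frac{1}{n-1}}=W^{\frac{1}{n-1}}$) gives $\|X(\1_{B_{2R}}\widetilde W^{\frac{1}{n-1}})\|_{L^n_\omega L^\infty_v}\sim\delta^{-2}\|(W^{\frac{1}{n-1}})^*_\delta\|_{L^n(\mathbb{S}^{n-1})}$, and since $\sigma(\theta_\nu)\sim\delta^{n-1}\sim|T_\nu|$, combining the last two displays closes the argument.

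The step I expect to need the most care is making the ``bridge'' rigorous in both directions: $|\widehat{gd\sigma}|^{\frac{2}{n-1}}$ is only morally band-limited (taking a non-even power destroys compact Fourier support), so one should replace it by an honest band-limited majorant via a Bernstein/reverse-Hölder inequality for band-limited functions, absorbing the remainder using the crude bound $\|\widehat{gd\sigma}\|_{L^\infty}\le\|g\|_{L^1}$; likewise each $\1_{\widetilde T_\nu}$ must be mollified at the scale of its radius. The accompanying probabilistic step — Khintchine with the non-$L^2$ exponent $\frac{2}{n-1}$, and selecting a single sign pattern favourable for an $L^n_\omega L^\infty_v$ quantity rather than for an integral — is routine but delicate. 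The rest is bookkeeping of exponents, which works out because Conjecture \ref{conj:Xrest} sits between the restriction and Kakeya conjectures precisely by being their composition.
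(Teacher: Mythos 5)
Your argument is essentially the paper's own proof. For the second implication, your randomised Knapp construction, Khintchine at the exponent $\tfrac{2}{n-1}$, the trivial pointwise bound $\mathcal{K}_\delta f(\omega)\lesssim\sup_v Xf(\omega,v)$, and the final redistribution/H\"older step using the near-constancy of the Kakeya maximal function on $\delta$-caps reproduce Lemma \ref{l:RestKake} and the argument following it (you work at spatial scale $\delta^{-2}$ where the paper rescales to the unit ball, which is the same thing). For the first implication the structure is also the paper's: restriction implies Kakeya, which is applied to a band-limited majorant of $|\widehat{gd\sigma}|^{\frac{2}{n-1}}$, and the resulting $L^{\frac{2n}{n-1}}$ norm is then estimated by restriction again; your ``bridge'' is Lemma \ref{l:KakeXray}.

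The one step to be careful about is the remainder you propose to ``absorb using $\|\widehat{gd\sigma}\|_\infty\le\|g\|_1$''. If the majorising kernel decays only like the Poisson kernel $|x|^{-(n+1)}$ (the paper's choice), this crude bound controls the contribution to the unit ball from the annuli $|x|\sim 2^j$ only by $\sum_{j}2^{-j}R^{-1}\|g\|_1\sim R^{-1}\|g\|_1$, which is too large compared with the required $R^{-\frac{n-1}{2}+\varepsilon}\|g\|_{L^{\frac{2n}{n-1}}}$ once $n\ge4$; the paper instead applies \eqref{restconjend} on the balls $B_{2^jR}$, gaining the decay $(2^jR)^{-\frac{n+1}{2}+\varepsilon}$ from the extension estimate itself. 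Your plan still closes if you insist on a majorant with arbitrarily fast polynomial decay (e.g.\ dominate a Schwartz $\phi$ with $\widehat{\phi}\equiv1$ near the unit ball by $C_N(1+|x|)^{-N}$, which is self-comparable at unit scale), so this is a fixable imprecision rather than a wrong approach --- but as written the ``bookkeeping of exponents'' does not work out in dimensions $n\ge4$, which is precisely the non-routine point the paper isolates in Lemma \ref{l:KakeXray}.
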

Although \eqref{e:EndRadon} and \eqref{e:2/n-1} are very similar, it is of course the quadratic character of the former that makes it more tractable.
However, despite the exponent $2$ appearing to be subcritical in the context of line integrals, it does turn out to be rather natural to consider $X(|\widehat{gd\sigma}|^2)$, as the following elementary result illustrates (see also the forthcoming results in Section \ref{sub4}).
\begin{theorem}\label{planchX} For $f\in L^1(\mathbb{S}^{n-1})$ and $\omega\in\mathbb{S}^{n-1}$, let
$$
Sf(\omega)=\left(\int_{-1}^1(A_tf(\omega))^2dt\right)^{\frac12}.
$$
Then, for any $v\in \langle \omega\rangle^\perp$,
\begin{equation}\label{idX1}
X(|\widehat{gd\sigma}|^2)(\omega,v)=2\pi\int_{-1}^1|\widehat{gd\sigma}_{\omega,t}(v)|^2dt,
\end{equation}
and
\begin{equation}\label{idX2}
\sup_{v\in\langle\omega\rangle^\perp}X(|\widehat{gd\sigma}|^2)(\omega,v)\leq X_0(|\widehat{|g|d\sigma}|^2)(\omega)=2\pi S(|g|)(\omega)^2,
\end{equation}
with equality if $g$ is single-signed. Here $A_t$ and $X_0$ are given by \eqref{MCopt} and \eqref{X0} respectively.
\end{theorem}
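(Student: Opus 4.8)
The plan is to derive both displays from the same mechanism as Theorem~\ref{planch}: disintegrate the surface measure $d\sigma$ according to the height function $\xi\mapsto\xi\cdot\omega$, and then apply the one-dimensional Plancherel theorem along each line $\{v+s\omega:s\in\R\}$. This exploits cancellation on affine subspaces without any recourse to stationary phase.

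First I would fix $\omega\in\mathbb{S}^{n-1}$ and $v\in\langle\omega\rangle^\perp$, and record the disintegration identity $\int_{\mathbb{S}^{n-1}}h\,d\sigma=\int_{-1}^1 A_th(\omega)\,dt$, which is Fubini's theorem combined with $\int_{-1}^1\delta(\xi\cdot\omega-t)\,dt=1$ for $\xi\in\mathbb{S}^{n-1}$ (this being precisely the normalisation built into $d\sigma_{\omega,t}$). Writing $\xi=\xi'+t\omega$ with $\xi'\in\langle\omega\rangle^\perp$ and $t=\xi\cdot\omega$, one has $v\cdot\xi=v\cdot\xi'$ and $s\omega\cdot\xi=st$ on the slice $\{\xi\cdot\omega=t\}$, so applying the disintegration to $h(\xi)=e^{i(v+s\omega)\cdot\xi}g(\xi)$ gives
\[
F(s):=\widehat{gd\sigma}(v+s\omega)=\int_{-1}^1 e^{ist}\,\widehat{gd\sigma}_{\omega,t}(v)\,dt,
\]
so that $F$ is (a reflection of) the one-dimensional Fourier transform in $s$ of the function $t\mapsto\widehat{gd\sigma}_{\omega,t}(v)$, which is supported in $[-1,1]$. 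Plancherel's theorem in one variable then yields $\int_\R|F(s)|^2\,ds=2\pi\int_{-1}^1|\widehat{gd\sigma}_{\omega,t}(v)|^2\,dt$, and since the left-hand side is $X(|\widehat{gd\sigma}|^2)(\omega,v)$ by definition of the X-ray transform, this is exactly \eqref{idX1}.

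For the identity asserted in \eqref{idX2} I would specialise \eqref{idX1} to $v=0$ with $g$ replaced by $|g|$, observe that $\widehat{|g|d\sigma}_{\omega,t}(0)=\int_{\mathbb{S}^{n-1}}|g|\,d\sigma_{\omega,t}=A_t(|g|)(\omega)$, and use the definition \eqref{X0} of $X_0$; this produces $X_0(|\widehat{|g|d\sigma}|^2)(\omega)=X(|\widehat{|g|d\sigma}|^2)(\omega,0)=2\pi\int_{-1}^1(A_t(|g|)(\omega))^2\,dt=2\pi S(|g|)(\omega)^2$. The inequality in \eqref{idX2} is then immediate from \eqref{idX1}: the trivial pointwise bound $|\widehat{gd\sigma}_{\omega,t}(v)|\le\int_{\mathbb{S}^{n-1}}|g|\,d\sigma_{\omega,t}=A_t(|g|)(\omega)$ holds for every $v$ and $t$, so inserting it into \eqref{idX1} and taking the supremum over $v$ gives $\sup_v X(|\widehat{gd\sigma}|^2)(\omega,v)\le 2\pi S(|g|)(\omega)^2$, which equals the right-hand side of \eqref{idX2} by the previous sentence. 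When $g$ is single-signed this bound is saturated at $v=0$, since then $|\widehat{gd\sigma}_{\omega,t}(0)|=\bigl|\int_{\mathbb{S}^{n-1}}g\,d\sigma_{\omega,t}\bigr|=A_t(|g|)(\omega)$ for every $t$; hence the supremum is attained at $v=0$ and equals $2\pi S(|g|)(\omega)^2$.

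Since the statement is elementary there is no serious obstacle; the only point requiring care is making the interchange of integrals and the Plancherel step rigorous and keeping track of the constant $2\pi$ (which is tied to the $e^{ix\cdot\xi}$ normalisation of the extension operator). I would handle this by first verifying everything for smooth $g$, where all the integrals converge absolutely and the manipulations are transparent, and then passing to general $g\in L^2(\mathbb{S}^{n-1})$ by density, using $|\widehat{gd\sigma}_{\omega,t}(v)|\le A_t(|g|)(\omega)$ together with the membership of $t\mapsto A_t(|g|)(\omega)$ in $L^2(-1,1)$ for almost every $\omega$ (an instance of the mapping properties of the slicing operators $A_t$, cf.\ Christ~\cite{Christ84}) to control the right-hand side of \eqref{idX1}; this simultaneously guarantees that $|\widehat{gd\sigma}|^2$ is genuinely integrable along almost every line in direction $\omega$, which is what makes the left-hand sides well defined.
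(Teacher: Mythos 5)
Your argument is correct and follows essentially the same route as the paper: the Fubini disintegration of $d\sigma$ into the slice measures $d\sigma_{\omega,t}$, the factorisation of the phase along the line $v+s\omega$ giving $\widehat{gd\sigma}(v+s\omega)=\int_{-1}^1 e^{ist}\widehat{gd\sigma}_{\omega,t}(v)\,dt$, one-dimensional Plancherel for \eqref{idX1}, and the bound $|\widehat{gd\sigma}_{\omega,t}(v)|\le A_t(|g|)(\omega)$ together with the $v=0$ identification $X_0(|\widehat{|g|d\sigma}|^2)(\omega)=2\pi S(|g|)(\omega)^2$ for \eqref{idX2}. Your treatment of the single-signed equality case (saturation at $v=0$) is, if anything, stated slightly more carefully than in the paper, and the closing remarks on rigour are harmless extras.
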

Theorem \ref{planchX} suggests looking for X-ray estimates of the form
\begin{equation}\label{e:r=infty}
\big\| X( |\widehat{gd\sigma}|^2) \|_{L^q_\omega L^\infty_v} \lesssim \| g \|_{L^p(\mathbb{S}^{n-1})}^2.
\end{equation}
For $n=3$ at least, Theorem \ref{planchX} allows us to provide a complete picture for the inequality \eqref{e:r=infty}.
\begin{theorem}\label{t:n=3,r=infty}
Suppose $n=3$ and $p,q\ge1$.  Then
\begin{equation}\label{e:n=3r=infty}
\big\| X( |\widehat{gd\sigma}|^2) \|_{L^q_\omega L^\infty_v} \lesssim \| g \|_{L^p(\mathbb{S}^{2})}^2
\end{equation}
holds if and only if
\begin{equation}\label{e:n3rinfty-nec}
\frac1p \le\min\left\{\frac12 + \frac1{2q}, \frac34\right\},\quad \left(\frac1p,\frac1q\right) \neq \left(\frac34,\frac12\right).
\end{equation}
\end{theorem}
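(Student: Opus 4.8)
The plan is to reduce, via Theorem~\ref{planchX}, to a sharp mapping property of $S$ and then carry out the resulting harmonic analysis on $\mathbb{S}^2$. By \eqref{idX2} one has $\sup_v X(|\widehat{gd\sigma}|^2)(\omega,v)\le 2\pi S(|g|)(\omega)^2$, with equality for single-signed $g$, so \eqref{e:n=3r=infty} is \emph{equivalent} to $\|S(h)\|_{L^{2q}(\mathbb{S}^2)}\lesssim\|h\|_{L^p(\mathbb{S}^2)}$ for $h\ge 0$. Writing $r=2q$ and recalling $S(h)(\omega)^2=\int_{-1}^1(A_th(\omega))^2\,dt$, the target condition \eqref{e:n3rinfty-nec} reads $\tfrac1p\le\min\{\tfrac12+\tfrac1r,\tfrac34\}$ together with $(\tfrac1p,\tfrac1r)\neq(\tfrac34,\tfrac14)$; and since $h\mapsto(A_th)_{t\in(-1,1)}$ is linear with values in $L^2_t$ and $\|S(h)\|_{L^r_\omega}=\|(A_th)_t\|_{L^r_\omega(L^2_t)}$ for $h\ge0$, I may interpolate freely between mixed-norm bounds on this map.

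The $A_t$-toolbox for $n=3$ consists of: (i) the uniform-in-$t$ bounds $\|A_t\|_{L^p\to L^\rho}\lesssim1$ whenever $\rho\le p$, from $A_t\1=2\pi$, self-adjointness of $A_t$, finiteness of $\sigma$ and interpolation; (ii) the curvature/smoothing estimate $\|A_t\|_{L^{4/3}(\mathbb{S}^2)\to L^2(\mathbb{S}^2)}\lesssim(1-t^2)^{-1/4}$, obtained from the Funk--Hecke identity ($A_t$ acts on degree-$k$ harmonics by $2\pi P_k(t)$), the classical bound $|P_k(t)|\lesssim k^{-1/2}(1-t^2)^{-1/4}$, and the Sobolev embedding $H^{1/2}(\mathbb{S}^2)\hookrightarrow L^4(\mathbb{S}^2)$ --- the point being $\int_{-1}^1(1-t^2)^{-1/2}\,dt<\infty$; and (iii) the identity $S(h)(\omega)^2=\int_{(\mathbb{S}^2)^2}h(x)h(y)\,\delta((x-y)\cdot\omega)\,d\sigma(x)\,d\sigma(y)=\langle T_\omega h,h\rangle$, where $T_\omega h(x)=\int h(y)\,\delta((x-y)\cdot\omega)\,d\sigma(y)$ is self-adjoint with positive kernel, bounded on $L^\infty$ by $2\pi$ (since $T_\omega\1=2\pi$), hence on $L^2$ by $2\pi$. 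Via Fubini ($\|S(h)\|_{L^2_\omega}^2=\int_{-1}^1\|A_th\|_{L^2_\omega}^2\,dt$) together with (i), (ii) and (iii), these give four corner estimates: $(p,q)=(\infty,\infty)$ (trivial), $(\infty,1)$ and $(4/3,1)$, and $(p,q)=(2,\infty)$ (this last from the pointwise bound $S(h)(\omega)^2\le 2\pi\|h\|_{L^2}^2$ supplied by (iii)).

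The hard step --- the one I expect to carry essentially all the work --- is the boundary segment $\tfrac1p=\tfrac12+\tfrac1{2q}$, $2<q<\infty$, running from the corner $(2,\infty)$ to the excluded point $E=(4/3,2)$. Minkowski in $t$ is fatally lossy here, since a circle-average on a surface gains only $\tfrac14$ of an exponent --- no single $A_t$ maps $L^p\to L^r$ with $\tfrac1r=\tfrac1p-\tfrac12$ --- so one must genuinely exploit the $t$-average (which makes $\int_{-1}^1A_t^2\,dt$, acting on degree $k$ by $\tfrac{8\pi^2}{2k+1}$, behave like order-$1$ fractional integration on $\mathbb{S}^2$). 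My route would be to establish the \emph{restricted weak-type} bound $\|S(h)\|_{L^{4,\infty}(\mathbb{S}^2)}\lesssim\|h\|_{L^{4/3}(\mathbb{S}^2)}$ at $E$ by a level-set analysis of the quadrilinear identity
\[
\|S(h)\|_{L^4_\omega}^4=2\int_{(\mathbb{S}^2)^4}\frac{h(x_1)h(x_2)h(x_3)h(x_4)}{|(x_1-x_2)\times(x_3-x_4)|}\,d\sigma^{\otimes 4},
\]
obtained by squaring $S(h)^2=\langle T_\omega h,h\rangle$ and integrating out $\omega$ using $\int_{\mathbb{S}^2}\delta(u\cdot\omega)\delta(v\cdot\omega)\,d\sigma(\omega)\sim|u\times v|^{-1}$; this is a critical multilinear fractional-integral inequality on $\mathbb{S}^2$, of the type accessible through Christ's bounds \cite{Christ84} on uncentred spherical means, whose endpoint is of restricted weak type while the strong $L^4$ estimate fails logarithmically. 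Granting this, Marcinkiewicz (restricted weak-type) interpolation of $h\mapsto(A_th)_t$ between the weak endpoint at $E$ and the strong corners at $(2,\infty)$ and $(4/3,1)$ yields strong bounds on the two open boundary edges through $E$, and interpolation among all these estimates fills the interior; so \eqref{e:n=3r=infty} holds throughout \eqref{e:n3rinfty-nec}.

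For necessity a single family does everything: $g=\1_B$ with $B$ a spherical cap of radius $\delta$. Then $\|g\|_{L^p}^2\sim\delta^{4/p}$, while $|\widehat{gd\sigma}|\sim\delta^2$ on a box of dimensions $\delta^{-1}\times\delta^{-1}\times\delta^{-2}$ whose long axis points along the cap's centre, so $\sup_vX(|\widehat{gd\sigma}|^2)(\omega,v)\sim\delta^2$ for $\omega$ within angle $\delta$ of that axis and $\sim\delta^3/\alpha$ for $\omega$ at angle $\alpha\in[\delta,1]$ from it. Integrating the $q$-th power over $\omega$ (with $d\sigma\sim\alpha\,d\alpha$ near the axis) produces $\delta^{2q+2}$ from the first range and $\delta^{3q}$ ($q<2$), $\delta^{2q+2}$ ($q>2$), or $\delta^6\log(1/\delta)$ ($q=2$) from the second; comparison with $\delta^{4/p}$ forces $\tfrac1p\le\tfrac34$ when $q\le2$, $\tfrac1p\le\tfrac12+\tfrac1{2q}$ when $q\ge2$, and --- via the logarithm at $q=2$ --- the failure of \eqref{e:n=3r=infty} at $(p,q)=(4/3,2)$. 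These are exactly the conditions in \eqref{e:n3rinfty-nec}, which completes the plan.
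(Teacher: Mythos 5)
Your reduction via \eqref{idX2} to the equivalent bound $\|S(h)\|_{L^{2q}(\mathbb{S}^2)}\lesssim\|h\|_{L^p(\mathbb{S}^2)}$ for $h\ge0$ is correct, your corner estimates at $(p,q)=(2,\infty)$ and $(4/3,1)$ are fine, and your necessity argument (Knapp cap plus the logarithmic divergence at $q=2$) is complete and essentially the same as the paper's. The problem is the sufficiency on the critical segments, which you yourself flag as carrying ``essentially all the work'': the endpoint estimate near the excluded point $(\tfrac34,\tfrac12)$ is asserted, not proved. Your proposed route --- a level-set analysis of the quadrilinear form with kernel $|(x_1-x_2)\times(x_3-x_4)|^{-1}$, said to be ``accessible through Christ's bounds'' --- is a genuine gap: no such critical multilinear determinant-kernel estimate appears in \cite{Christ84} (which treats $L^p\to L^q$ bounds for $k$-plane/spherical Radon means), and the obvious move of bounding $\int_\Omega\delta((x-y)\cdot\omega)\,d\sigma(\omega)\lesssim|x-y|^{-1}$ over a level set $\Omega=\{S(\1_F)>\lambda\}$ only returns $\lambda^2|\Omega|\lesssim|F|^{3/2}$, i.e.\ the trivial weak-$L^2$ information already contained in your $(4/3,1)$ corner, not the weak-$L^4$ bound you need. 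Proving restricted weak type at a point where the strong bound fails logarithmically is exactly the kind of delicate endpoint analysis that would constitute the proof, and it is missing.

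There is also a structural problem with the interpolation scheme even if one grants a \emph{restricted} weak-type bound at $E=(\tfrac34,\tfrac12)$ (which is all a characteristic-function/level-set argument would give). The diagonal edge $\tfrac1p=\tfrac12+\tfrac1{2q}$, $q>2$, is then fine, since interpolating with the $(2,\infty)$ corner changes the domain exponent and one can take the Lorentz second index equal to $p<2q$. But the vertical edge $\tfrac1p=\tfrac34$, $1<q<2$, can only be reached by interpolating along the line $\tfrac1p=\tfrac34$ itself (no admissible estimates exist with $\tfrac1p>\tfrac34$), and real interpolation between $L^{4/3,1}\to L^{4,\infty}$ and $L^{4/3}\to L^2$ produces domains $L^{4/3,s}$ with $s<4/3$, which is strictly weaker than the strong-type claim of \eqref{e:n3rinfty-nec} there; you would need the full weak-type bound $L^{4/3}\to L^{4,\infty}$ at $E$, which your level-set strategy does not supply and whose truth you have not addressed. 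For comparison, the paper avoids this altogether: Lemma \ref{l:X-reduction} converts the problem into power-weighted \emph{Lorentz-space} extension estimates $\|\widehat{gd\sigma}\langle\cdot\rangle^{-\gamma}\|_{L^{2q,2}}\lesssim\|g\|_{L^p}$ (Proposition \ref{sufprop}), and these endpoint Bloom--Sampson bounds are proved in the appendix (Theorem \ref{p:w-rest}) by a dyadic decomposition using the radial-weight Mizohata--Takeuchi estimate, Bourgain's summation lemma as in \cite{LeeSeo}, and real interpolation along the critical line \cite{BS92}. If you want to salvage your route, the missing ingredient is precisely a proof of the (restricted) weak-type endpoint for $S$ at $E$ together with a separate argument for the open vertical edge; as written, both are absent.
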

Theorem \ref{t:n=3,r=infty} also contains estimates that lie far beyond what may be obtained by applying known, or indeed possible, estimates for the extension operator and X-ray transform.
A simple example is the case $(p,q)=(2,\infty)$. Of course $X:L^p\not\rightarrow L^\infty$ for any $p$, and so no bound of this type may be deduced from the restriction conjecture.
Such ``improvements" have a simple heuristic explanation based on the standard wavepacket decomposition of the extension operator and a well-known (probabilistic) link between the extension operator and the $X$-ray transform.
This link, which famously connects the restriction conjecture to the Kakeya conjecture, reveals that $|\widehat{gd\sigma}|^2$ is, in some average sense, comparable to $X^*(|h|^2)$ for some function $h:\mathcal{M}_{1,n}\rightarrow\mathbb{C}$ formed from the wavepacket decomposition of $g$.  We refer the reader to \cite{BCSS} or \cite{TaoNotes} for some clarification of these heuristics. The main point here is that the composition $XX^*$ is much less singular than $X$.
We remark that the case $(p,q)=(2,\infty)$ mentioned here is straightforward to prove in the sharp form
\begin{equation}\label{infX}
\|X(|\widehat{gd\sigma}|^2)\|_{L^\infty}\leq 2\pi^2\|g\|_{L^2(\mathbb{S}^2)}^2,
\end{equation} where constant functions are among the extremisers -- see Section \ref{Sec6}.

The key ingredient in our proof of Theorem \ref{t:n=3,r=infty} is a power-weighted $L^p$ extension inequality considered by Bloom and Sampson \cite{BS92}. Our argument will require an endpoint case left open in \cite{BS92}, which we present in the appendix.

\subsection{Applications of tomography bounds to restriction theory}\label{sub4}
The basic principle of X-ray tomography is captured by the well-known inversion formula,
\begin{equation}\label{inversion}
f=c_nX^*(-\Delta_v)^{\frac12}Xf,
\end{equation}
or the closely-related fact that $c_n^{1/2}(-\Delta_v)^{1/4}X$ is an isometry between $L^2(\mathbb{R}^n)$ and $L^2(\mathcal{M}_{1,n})$ for a certain dimensional constant $c_n$. We might therefore expect that estimates on $X(|\widehat{gd\sigma}|^2)$, or its variants, may be used to address existing problems in restriction theory. A precedent for this approach may be found in the work of Planchon and Vega \cite{PV}, where certain sharp Strichartz estimates for the Schr\"odinger equation are obtained from identities involving the Radon transform of $|u(\cdot,t)|^2$, where $u$ is a solution to the free time-dependent Schr\"odinger equation; see also Beltran and Vega \cite{BelVega}, where their X-ray analysis is related to the recent sharp Stein--Tomas restriction theorem of Foschi.

A particularly compelling candidate for such an application is a conjectural weighted inequality attributed to Mizohata and Takeuchi \cite{MT} (see also \cite{BRV}), which states that
\begin{equation}\label{MizTak}
\int_{\mathbb{R}^n} |\widehat{gd\sigma}|^2\, w  \lesssim \| Xw \|_{L^{\infty}} \int_{\mathbb{S}^{n-1}}|g|^2
\end{equation}
for any weight function $w$ on $\mathbb{R}^n$.
This conjecture dates back to the 1970s, and remains unknown for general weights even for $n=2$ (if $w$ is radial then it is known, being equivalent to a certain uniform eigenvalue estimate involving Bessel functions -- see \cite{BRV}, \cite{CS} or \cite{BBC3}).
Motivated by the numerology of the standard Sobolev embeddings into $L^\infty$, it is perhaps natural to embed \eqref{MizTak} in a family of inequalities resembling
\begin{equation}\label{e:wMizTak}
\int_{\mathbb{R}^n} |\widehat{gd\sigma}|^2\, w  \lesssim \big\| (-\Delta_v)^{\frac{n-1}{2q}} Xw \big\|_{L^{\infty}_\omega L^q_v(\mathcal{M}_{1,n})} \int_{\mathbb{S}^{n-1}}|g|^2,
\end{equation}
where $1\leq q\leq\infty$. [Our reasoning here is of course merely heuristic -- strictly speaking the Sobolev embedding should involve the inhomogeneous derivative $(1-\Delta)$ raised to a power strictly larger than $\frac{n-1}{2q}$.]
Of course \eqref{MizTak} is just the case $q=\infty$, and so it is natural to try to establish a form of \eqref{e:wMizTak} for $q$ as large as possible.
To this end we may use the aforementioned fact that $c_n^{1/2}(-\Delta_v)^{1/4}X$ is an isometry between $L^2(\mathbb{R}^n)$ and $L^2(\mathcal{M}_{1,n})$ to write
\begin{eqnarray}\label{basicidea}\begin{aligned}
\int_{\mathbb{R}^n}|\widehat{gd\sigma}|^2w&=c_n\left\langle(-\Delta_v)^{\frac14}X(|\widehat{gd\sigma}|^2),(-\Delta_v)^{\frac14}Xw\right\rangle_{L^2(\mathcal{M}_{1,n})}\\&=c_n\left\langle(-\Delta_v)^{\frac1{2}(1- \frac{n-1}{q})}X(|\widehat{gd\sigma}|^2),(-\Delta_v)^{\frac{n-1}{2q}}Xw\right\rangle_{L^2(\mathcal{M}_{1,n})}\end{aligned}
\end{eqnarray}
for all $1\leq q\leq\infty$. An application of H\"older's inequality now leads to the bound
\begin{equation}
\int_{\mathbb{R}^n}|\widehat{gd\sigma}|^2w\lesssim \big\|(-\Delta_v)^{\frac{n-1}{2q}}Xw \big\|_{L^\infty_\omega L^q_v} \big\|(-\Delta_v)^{\frac12(1-\frac{n-1}q)}X(|\widehat{gd\sigma}|^2) \big\|_{L^1_\omega L^{q'}_v}.
\end{equation}
The tentative estimate \eqref{e:wMizTak} may therefore be reduced to
\begin{equation}\label{e:Reduced0}
\big\|(-\Delta_v)^{\frac12(1-\frac{n-1}q)}X(|\widehat{gd\sigma}|^2) \big\|_{L^1_\omega L^{q'}_v(\mathcal{M}_{1,n})} \lesssim \|g\|_{L^2(\mathbb{S}^{n-1})}^2.
\end{equation}
In order to ensure finiteness in \eqref{e:wMizTak} and \eqref{e:Reduced0} we consider here the validity of the local variant
\begin{equation}\label{e:wMizTak2}
\int_{B_R} |\widehat{gd\sigma}|^2\, w  \lesssim R^\varepsilon \big\| (-\Delta_v)^{\frac{n-1}{2q}} Xw \big\|_{L^{\infty}_\omega L^q_v(\mathcal{M}_{1,n})} \int_{\mathbb{S}^{n-1}}|g|^2,
\end{equation}
formulated in the spirit of \eqref{restconjend}.
Arguing as above, \eqref{e:wMizTak2} would follow from the estimate
\begin{equation}\label{e:Reduced00}
\big\|(-\Delta_v)^{\frac12(1-\frac{n-1}q)}X(\gamma_R|\widehat{gd\sigma}|^2)\big\|_{L^1_\omega L^{q'}_v(\mathcal{M}_{1,n})} \lesssim R^\varepsilon\|g\|_{L^2(\mathbb{S}^{n-1})}^2,
\end{equation}
where $\gamma_R$ is a smooth bump function adapted to $B_R$ (satisfying certain technical conditions that we clarify in Section \ref{Sect:appl}).
The first thing to notice is that \eqref{e:Reduced00}, and hence \eqref{e:wMizTak2}, with $q=1$ and $n=2$ is a direct consequence of Theorem \ref{t:main}. Our main result here states that, for $n=2$, the exponent $q$ may be pushed up to $2$.
\begin{theorem}\label{t:wMizTak}
Let $n=2$. Then \eqref{e:Reduced00}, and hence \eqref{e:wMizTak2}, holds true as long as $1\le q\le 2$. Moreover,
\begin{equation}\label{e:Reduced}
\big\|(-\Delta_v)^{\frac{1}{4}}X(\gamma_R|\widehat{gd\sigma}|^2) \big\|_{L^1_\omega L^{2}_v(\mathcal{M}_{1,2})} \lesssim \log R\|g\|_{L^2(\mathbb{S}^{1})}^2,
\end{equation}
and hence
\begin{equation}\label{e:wMizTak22}
\int_{B_R} |\widehat{gd\sigma}|^2\, w \lesssim \log R\big\| (-\Delta_v)^{\frac{1}{4}} Xw \big\|_{L^{\infty}_\omega L^2_v(\mathcal{M}_{1,2})} \int_{\mathbb{S}^{1}}|g|^2.
\end{equation}\end{theorem}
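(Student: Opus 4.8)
Two of the three assertions are formal consequences of the third. The inequality \eqref{e:wMizTak22} follows from \eqref{e:Reduced} by exactly the duality computation \eqref{basicidea}--\eqref{e:Reduced00} with $q=2$: one writes $\int_{B_R}|\widehat{gd\sigma}|^2w=c_2\langle(-\Delta_v)^{1/4}X(\gamma_R|\widehat{gd\sigma}|^2),(-\Delta_v)^{1/4}Xw\rangle_{L^2(\mathcal M_{1,2})}$ and applies H\"older with the pairing $L^1_\omega L^2_v$ against $L^\infty_\omega L^2_v$. For \eqref{e:Reduced00} with $1\le q<2$ I would interpolate between the endpoint $q=2$, which is precisely \eqref{e:Reduced}, and the endpoint $q=1$, which (as noted in the text, using that for $n=2$ the Radon and X-ray transforms agree after the relabelling $\omega\mapsto\omega^\perp$ and that $\gamma_R\lesssim\1_{B_{CR}}$) is Theorem \ref{t:main} with $n=p=2$, $q=1$. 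Since $g\mapsto(-\Delta_v)^{\frac12(1-\frac1q)}X(\gamma_R|\widehat{gd\sigma}|^2)$ is quadratic in $g$, I would interpolate the associated bilinear map $(g_1,g_2)\mapsto(-\Delta_v)^{\frac12(1-\frac1q)}X(\gamma_R\widehat{g_1d\sigma}\,\overline{\widehat{g_2d\sigma}})$ by Stein's complex interpolation, the geometric mean of two $\log R$ endpoints giving $\log R$ (a fortiori $R^\varepsilon$). Everything thus reduces to \eqref{e:Reduced}.

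Write $F=|\widehat{gd\sigma}|^2$. Since $c_2^{1/2}(-\Delta_v)^{1/4}X$ is an $L^2$-isometry onto $L^2(\mathcal M_{1,2})$, Plancherel in $v$ and the Fourier slice identity $\widehat{X(\gamma_RF)(\omega,\cdot)}=\widehat{\gamma_RF}\,|_{\langle\omega\rangle^\perp}$ give
\[
\big\|(-\Delta_v)^{1/4}X(\gamma_RF)(\omega,\cdot)\big\|_{L^2_v}^2=c\int_{\langle\omega\rangle^\perp}|\xi|\,\big|\widehat{\gamma_RF}(\xi)\big|^2\,d\xi ,
\]
so that, after relabelling $\omega\mapsto\omega^\perp$, \eqref{e:Reduced} is equivalent to
\[
\int_{\mathbb{S}^1}\Big(\int_{\mathbb{R}}|\rho|\,\big|\widehat{\gamma_RF}(\rho\omega)\big|^2\,d\rho\Big)^{1/2}d\omega\lesssim\log R\,\|g\|_{L^2(\mathbb{S}^1)}^2 .
\]
Now $\widehat F$ is a fixed multiple of the autocorrelation measure $\mu=\mu_g$ of $gd\sigma$, characterised by $\langle\mu,\varphi\rangle=\iint_{\mathbb{S}^1\times\mathbb{S}^1}g(\xi)\overline{g(\xi')}\varphi(\xi-\xi')\,d\sigma(\xi)d\sigma(\xi')$ and supported in $\overline{B(0,2)}$; hence $\widehat{\gamma_RF}=c_0\,\widehat{\gamma_R}*\mu$. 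I would take $\gamma_R$ so that $\widehat{\gamma_R}\ge0$ and $\widehat{\gamma_R}(\xi)\lesssim R^2(1+R|\xi|)^{-N}$ for all $N$, a standard reduction.

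The engine is a pointwise bound for $\widehat{\gamma_R}*\mu$ on lines through the origin. Parametrising $\mathbb{S}^1$ by angle and using $\xi-\xi'=2\sin\tfrac{a-b}2\,(\tfrac{a+b}2)^\perp$, for $\eta\ne0$ with $|\eta|<2$ there are exactly two pairs $(\xi_\pm(\eta),\xi'_\pm(\eta))\in\mathbb{S}^1\times\mathbb{S}^1$ with $\xi_\pm-\xi'_\pm=\eta$, the map $(|\eta|,\eta/|\eta|)\mapsto(\xi_\pm,\xi'_\pm)$ has Jacobian comparable to $(1-|\eta|^2/4)^{-1/2}$, and one is led to
\[
\big|(\widehat{\gamma_R}*\mu)(\eta)\big|\lesssim R\sum_{\pm}\Big(\int_{I_\pm(\eta)}|g|^2\int_{I'_\pm(\eta)}|g|^2\Big)^{1/2}+O_N\big(R^{-N}\|g\|_{L^2}^2\big)
\]
for $1/R\lesssim|\eta|\lesssim2-1/R$, where $I_\pm(\eta),I'_\pm(\eta)$ are arcs about $\xi_\pm(\eta),\xi'_\pm(\eta)$ of length $\sim(R|\eta|)^{-1}$ (truncated at length $2\pi$); for $|\eta|\lesssim1/R$ one has $|(\widehat{\gamma_R}*\mu)(\eta)|\lesssim R\|g\|_{L^2}^2$, and near $|\eta|=2$ a similar bound holds with arcs of length $\sim(R\sqrt{2-|\eta|})^{-1}$ about $\pm\eta/|\eta|$. (For $g\equiv1$ this reduces to $\widehat{\gamma_R}*\mu(\eta)\sim|\eta|^{-1}(1-|\eta|^2/4)^{-1/2}$ away from $0$ and $2$, a useful check.) Feeding these into the $\rho$-integral and splitting into $|\rho|\lesssim1/R$, $1/R\lesssim|\rho|\lesssim1$, $|\rho|\sim1$ and $|\rho|\sim2$, the first range and the bulk $|\rho|\sim1$ (where the kernel is bounded and the change of variables $(\rho,\omega)\mapsto(\xi_+,\xi'_+)$ collapses things to $\|g\|_{L^2}^2$-type quantities) each contribute $O(\|g\|_{L^2}^2)$; as in Theorem \ref{t:main} the logarithm is produced by the remaining two ranges, i.e.\ by the singularity $|\eta|^{-1}$ of $\mu$ at the origin (integrable only after the $1/R$ truncation) and the milder one at $|\eta|=2$.

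The delicate point is the $\omega$-integration, and it is where I expect the main obstacle to lie. One cannot use Cauchy--Schwarz in $\omega$: that replaces $L^1_\omega$ by $L^2_\omega$, which by the displayed identity equals $c\|\gamma_RF\|_{L^2(\mathbb{R}^2)}=c\|\widehat{gd\sigma}\|_{L^4(\gamma_R)}^2$, and no logarithmically-refined $L^4$-restriction bound can hold in $n=2$ (a single $\delta$-cap with $R\sim\delta^{-2}$ refutes it), so the $L^1_\omega$ norm must genuinely be exploited. The plan is: after inserting the pointwise bound and using $ab\le\tfrac12(a^2+b^2)$ to decouple the two $g$-factors, to change variables in the $\rho$-integral from $\rho$ to the angle of $\xi_+(\rho\omega)$; since $\xi_\pm(\rho\omega)$ and $\xi'_\pm(\rho\omega)$ lie within $\sim|\rho|$ of the pole $\omega^\perp$, the quantity under the square root becomes a weighted self-correlation $\int|s|\,\widetilde h_{|s|}(\psi-s)\widetilde h_{|s|}(\psi+s)\,ds$, where $\psi$ parametrises $\omega$ and $\widetilde h_t=|g|^2*\chi_{1/(Rt)}$ is $|g|^2$ mollified at scale $1/(Rt)$; one then bounds $\int_{\mathbb{S}^1}(\cdots)^{1/2}d\psi$ by a Fubini/duality argument against $\|g\|_{L^2}^2$, using that the arc-centres sweep $\mathbb{S}^1$ with bounded overlap so that $\int_{\mathbb{S}^1}(\text{average of }|g|^2\text{ over the relevant short arc})\,d\psi\lesssim\|g\|_{L^2}^2$, and the dyadic sum over the $\sim\log R$ scales $1/R\lesssim|s|\lesssim1$ reproduces the logarithm and no more. (An alternative, in the spirit of Theorems \ref{t:main} and \ref{planchX}, is to expand $X(\gamma_RF)$ via Plancherel on the lines $v+s\omega$ in terms of the restricted extensions $\widehat{gd\sigma_{\omega,t}}$ and to invoke the Christ-type $L^p$ estimates for the operators $A_t$ from \cite{Christ84} underlying Theorem \ref{t:main}.) Carrying the $L^1_\omega$ norm through these nested square roots without leaking a power of $R$ (or an $L^4(\mathbb{S}^1)$ norm of $g$) — i.e.\ respecting the exact scale-by-scale structure of the bound — is the step I expect to require the most care.
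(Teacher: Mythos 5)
Your route tracks the paper's own proof very closely in outline: you reduce everything to the $q=2$ endpoint \eqref{e:Reduced} via the duality computation \eqref{basicidea} (giving \eqref{e:wMizTak22}) and interpolation with the $q=1$ case of \eqref{e:Reduced00} supplied by Theorem \ref{t:main}; you then apply Plancherel in $v$ and the slice theorem to rewrite the left side of \eqref{e:Reduced} as $\int_{\mathbb{S}^1}\bigl(\int_{\mathbb{R}}|\rho|\,|\widehat{\gamma_R F}(\rho\omega)|^2d\rho\bigr)^{1/2}d\omega$, and bound the mollified autocorrelation $\widehat{\gamma_R}\ast(gd\sigma)\ast(gd\sigma)$ pointwise by products of local masses of $|g|^2$ on a pair of arcs determined by the circle geometry. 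This is exactly the content of the paper's Lemma \ref{l:Red23Nov} (the proof of Theorem \ref{t:wStein}), whose output is the bilinear expression $BT_{1/R}(|g|^2_{1/R},|g|^2_{1/R})(\omega)^{1/2}+BT_{1/R}(|g|^2_{1/R},|g|^2_{1/R})(\omega^\perp)^{1/2}$; your variable arc length $(R|\eta|)^{-1}$ versus the paper's fixed mollification scale $1/R$ with the factor $(|z|+R^{-1})^{-1}$ is only a cosmetic difference (though note the weight in your displayed self-correlation should be of the type $ds/|s|$, not $|s|\,ds$, or no logarithm would arise).

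The genuine gap is at the step you yourself flag: the $L^1_\omega$ integration of the square root of the bilinear correlation, i.e.\ the analogue of \eqref{e:BT1/2} (equivalently \eqref{e:Goal23Nov3}), and the moves you propose there do not give it. Decoupling the two $g$-factors pointwise via $ab\le\frac12(a^2+b^2)$ destroys exactly the structure that makes the estimate true: for a single cap of width $w\ll 1/R$ (so $\|g\|_{L^2}^2=w$), at the scale $|s|\sim 1$ the decoupled quantity $\int_{\mathbb{S}^1}\bigl(\dashint_{|s|\sim 1}m(\psi-s)^2ds\bigr)^{1/2}d\psi$, with $m$ the average of $|g|^2$ over arcs of length $\sim 1/R$, is of order $\sqrt{R}\,\|g\|_{L^2}^2$ (it is an $L^2$ norm of $|g|^2$, i.e.\ an $L^4$-of-$g$ quantity), whereas the coupled quantity vanishes at that scale because both arcs $\psi\pm s$ must carry mass simultaneously; so this step alone leaks the power of $R$ you were trying to avoid. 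Likewise the ``bounded overlap of arc centres'' observation only controls the linear quantity $\int_{\mathbb{S}^1}m(\mathrm{arc}(\psi))\,d\psi$ and cannot by itself pass through the square root of a product. What is needed, and what the paper supplies, is the Kenig--Stein localisation argument \cite{KenigStein}: at each dyadic scale $\lambda$ partition $\mathbb{S}^1$ into arcs $I_i$ of length $\lambda$, apply Cauchy--Schwarz in $\psi$ on each $I_i$ (not globally), note that for $\psi\in I_i$ and $|s|\sim\lambda$ both arguments $\psi\pm s$ lie in a common arc $J_i$ of length $O(\lambda)$, so each piece is bounded by $\bigl(\|h\1_{J_i}\|_1\|h\1_{J_i}\|_1\bigr)^{1/2}$, and sum over $i$ using the almost disjointness of the $J_i$; the $\log R$ then comes from the $\sim\log R$ dyadic scales as in \eqref{e:Goal23Nov2}. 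With this lemma inserted your outline closes and coincides with the paper's proof; without it, \eqref{e:Reduced}, and hence the theorem, is not established.
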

The first remark to make is that the tomography reduction presented above, as it stands at least, fails to establish the Mizohata--Takeuchi conjecture \eqref{MizTak}, even with a growth factor of the form $R^\varepsilon$ in the truncation parameter $R$. Specifically, the estimate \eqref{e:Reduced00} is easily seen to fail for $n=2$ when $q>2$, even for the function $g\equiv 1$. In this sense the estimate \eqref{e:Reduced} in the statement of Theorem \ref{t:wMizTak} is best-possible.


We shall reduce Theorem \ref{t:wMizTak} to a stronger two-weighted estimate for $\widehat{gd\sigma}$ in the spirit of a well-known conjecture of Stein \cite{S}; see also C\'ordoba \cite{Co} and Carbery--Soria--Vargas \cite{CSV} for variants of this. In the context of the extension operator, Stein's conjecture takes the form
\begin{equation}\label{Stconj}
\int_{B_R}|\widehat{gd\sigma}|^2w\lesssim\int_{\mathbb{S}^{n-1}}|g|^2\mathcal{M}w,
\end{equation}
where $\mathcal{M}$ is (possibly a variant of) the Kakeya-type maximal operator
\begin{equation}\label{steinX}
\mathcal{M}f(\omega)=\sup_{v\in\langle\omega\rangle^\perp}X|f|(\omega,v),
\end{equation}
or equivalently,
$$
\mathcal{M}_{R}f(\omega):=\sup_{T\parallel\omega}\int_T|f|,
$$
where in this last expression the supremum is taken over all $1$-neighbourhoods of line segments in $\mathbb{R}^n$ of length $R$, parallel to the direction $\omega\in\mathbb{S}^{n-1}$. Of course \eqref{Stconj}, with maximal operator given by \eqref{steinX}, implies \eqref{MizTak} since $\|\mathcal{M}w\|_\infty=\|Xw\|_\infty$. The proposed inequality \eqref{Stconj} is intended to clarify the relationship between the restriction and Kakeya conjectures, allowing the conjectural $L^p-L^q$ bounds for the extension operator to follow from those for $\mathcal{M}_R$. Specifically, it is straightforward to verify that the Kakeya (maximal) conjecture
\begin{equation}\label{kakeyaconjecturescaled}
\|\mathcal{M}_Rf\|_{L^n(\mathbb{S}^{n-1})}\lesssim_\varepsilon R^{\varepsilon}\|f\|_{L^n(\mathbb{R}^n)},
\end{equation}
stated here in an equivalent scaled form,
would imply the endpoint extension inequality \eqref{restconjend} via \eqref{Stconj} by an elementary duality argument.
We refer to \cite{HRZ,Zahl} and the references there for further discussion of the Kakeya conjecture, which is fully resolved only when $n=2$. We stress however that Stein's conjecture, with a Kakeya-type maximal operator $\mathcal{M}$ as above, is not satisfactorily resolved even for $n=2$, unless the weight $w$ has some very specific structure -- see, in particular, \cite{BRV}, \cite{CS}, \cite{BCSV} and \cite{BBC3}.

Our next theorem provides a variant of Stein's conjecture for the extension operator in the case $n=2$. Its statement naturally involves a bilinear analogue of the linear operator $T_\delta$ appearing in Section 1.2. It will be convenient to define this initially in abstract terms, as this sort of bilinearisation will also arise in the context of the spherical Radon transform $A_t$, also defined in Section 1.2. For an operator $T$, mapping functions on $\mathbb{S}^{n-1}$ to functions on $\mathbb{S}^{n-1}$, we define its bilinearisation $BT$ by the formula
\begin{equation}\label{e:Bilinearize}
BT(g_1,g_2)(\omega) = T(g_1(\cdot)\widetilde{g}_2(R_\omega(\cdot))(\omega),\;\;\; \omega\in\mathbb{S}^{n-1},
\end{equation}
where $\widetilde{g}(\omega)=\overline{g(-\omega)}$ and $R_\omega(\xi) = \xi - 2(\xi\cdot \omega) \omega$ is the reflection of $\xi$ in the hyperplane $\langle\omega\rangle^\perp$. \footnote{It is instructive to observe that if $H$ is the Hilbert transform on $\mathbb{S}^1$ then $BH$ becomes the classical bilinear Hilbert transform on $\mathbb{S}^1$; see \cite{LaceyThile}.}
In particular we have
\begin{equation}\label{e:BilinearT}
BT_\delta(g_1,g_2)(\omega) = \int_{\mathbb{S}^{n-1}} \frac{g_1(\xi) \widetilde{g}_2(R_\omega(\xi))}{|\omega\cdot \xi| +\delta}\, d\sigma(\xi).
\end{equation}
\begin{theorem}\label{t:wStein}
Let $n=2$. Then for all $R\gg1$,
\begin{align}\label{e:wStein0}
\int_{B_R} |\widehat{gd\sigma}|^2 w \lesssim & \int_{\mathbb{S}^1} BT_{1/R} \bigl( |g|_{1/R}^2, |g|_{1/R}^2 \bigr)(\omega)^\frac12 \mathcal{S}w(\omega)\, d\sigma(\omega) \\
& + \int_{\mathbb{S}^1}  BT_{1/R} \bigl( |g|_{1/R}^2, |g|_{1/R}^2 \bigr)(\omega^\perp)^\frac12 \mathcal{S}w(\omega)\, d\sigma(\omega), \nonumber
\end{align}
where
$$
\mathcal{S}w(\omega):= \left( \int_{\langle \omega\rangle^\perp} \bigl|(-\Delta_v)^\frac14 Xw(\omega,v)\bigr|^2\, d\lambda_\omega(v) \right)^\frac12,
$$
and $|g|_{1/R}$ is a suitable mollification of $|g|$ at scale $1/R$, such as that given by convolution with the Poisson kernel on $\mathbb{S}^1$.
\end{theorem}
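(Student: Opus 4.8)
\emph{Overall strategy.} The plan is to run the Planchon--Vega mechanism behind Theorems~\ref{planch} and~\ref{planchX} --- Plancherel along an affine subspace, which converts the squared extension operator into (essentially) a push‑forward of $|g|^{2}$ under an orthogonal projection --- but now in \emph{bilinear} form, so that the bilinearisation $BT$ plays the role the squaring $g\mapsto|g|^{2}$ played in those linear identities. Throughout $n=2$; recall that for $n=2$ a line in direction $\omega$ is simultaneously a hyperplane with normal $\omega^{\perp}$, and $\mathcal{M}_{1,2}$ is parametrised by $\omega\in\mathbb{S}^{1}$ and $v\in\langle\omega\rangle^{\perp}=\mathbb{R}\omega^{\perp}$.

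\emph{Step 1: reduction via the tomographic isometry.} First I would fix a bump $\gamma_{R}\geq\1_{B_{R}}$ adapted to $B_{R}$, with the structural properties recorded in Section~\ref{Sect:appl}, so that $\int_{B_{R}}|\widehat{gd\sigma}|^{2}w\leq\int_{\mathbb{R}^{2}}\gamma_{R}|\widehat{gd\sigma}|^{2}w$. Since $c_{2}^{1/2}(-\Delta_{v})^{1/4}X$ is an isometry of $L^{2}(\mathbb{R}^{2})$ onto $L^{2}(\mathcal{M}_{1,2})$, polarisation gives
\begin{equation*}
\int_{\mathbb{R}^{2}}\gamma_{R}|\widehat{gd\sigma}|^{2}w=c_{2}\int_{\mathbb{S}^{1}}\int_{\langle\omega\rangle^{\perp}}(-\Delta_{v})^{\frac14}X(\gamma_{R}|\widehat{gd\sigma}|^{2})(\omega,v)\,\overline{(-\Delta_{v})^{\frac14}Xw(\omega,v)}\,d\lambda_{\omega}(v)\,d\sigma(\omega).
\end{equation*}
Applying the Cauchy--Schwarz inequality in $v$ and recognising $\mathcal{S}w(\omega)$, the estimate \eqref{e:wStein0} is reduced to the bound
\begin{equation*}
\int_{\langle\omega\rangle^{\perp}}\bigl|(-\Delta_{v})^{\frac14}X(\gamma_{R}|\widehat{gd\sigma}|^{2})(\omega,v)\bigr|^{2}\,d\lambda_{\omega}(v)\lesssim BT_{1/R}\bigl(|g|_{1/R}^{2},|g|_{1/R}^{2}\bigr)(\omega)+BT_{1/R}\bigl(|g|_{1/R}^{2},|g|_{1/R}^{2}\bigr)(\omega^{\perp}),
\end{equation*}
uniformly in $\omega\in\mathbb{S}^{1}$ and $R\gg1$.

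\emph{Step 2: Plancherel on the line and emergence of $BT_{1/R}$.} By the projection--slice theorem the left side of the last display is a constant multiple of $\int_{\mathbb{R}}|\tau|\,\bigl|\widehat{\gamma_{R}|\widehat{gd\sigma}|^{2}}(\tau\omega^{\perp})\bigr|^{2}\,d\tau$, and writing $\widehat{\gamma_{R}|\widehat{gd\sigma}|^{2}}=\widehat{\gamma_{R}}\ast\mu_{g}$ with $\mu_{g}=\iint_{\mathbb{S}^{1}\times\mathbb{S}^{1}}g(\xi)\overline{g(\eta)}\,\delta_{\xi-\eta}\,d\sigma(\xi)\,d\sigma(\eta)$ the autocorrelation of $gd\sigma$ gives $\widehat{\gamma_{R}|\widehat{gd\sigma}|^{2}}(\tau\omega^{\perp})=\iint g(\xi)\overline{g(\eta)}\,\widehat{\gamma_{R}}(\tau\omega^{\perp}-\xi+\eta)\,d\sigma(\xi)\,d\sigma(\eta)$. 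Expanding the modulus squared yields a quadrilinear integral over $(\mathbb{S}^{1})^{4}$; carrying out the $\tau$-integral first (using that $\widehat{\gamma_{R}}$ is, up to rapidly decaying tails, a multiple of $R^{2}\1_{\{|\zeta|\lesssim1/R\}}$) forces $\xi-\eta$ and $\xi'-\eta'$ into an $O(1/R)$-neighbourhood of $\mathbb{R}\omega^{\perp}$, hence $\xi,\eta$ (and $\xi',\eta'$) into nearly equal $\omega$-coordinates, so each pair lies within $O(1/R)$ either of the diagonal $\eta=\xi$ or of the reflection $\eta=-R_{\omega}\xi$ (the reflection of $\xi$ in $\langle\omega\rangle$, which has the same $\omega$-coordinate as $\xi$). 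Bounding the amplitudes by products of moduli, collecting the $1/R$-neighbourhoods into the Poisson mollifications $|g|_{1/R}$, and changing variables along each branch --- on the line direction via $\xi\mapsto\xi\cdot\omega$ (Jacobian $|\xi\cdot\omega^{\perp}|^{-1}$) and, from the Fourier slice, transversally via $\xi\mapsto\xi\cdot\omega^{\perp}$ (Jacobian $|\xi\cdot\omega|^{-1}$), each regularised at scale $1/R$ --- the reflection branch reassembles (after the change of variables $\xi\mapsto R_{\omega}\xi$ is used to symmetrise the two surviving configurations) into $BT_{1/R}(|g|_{1/R}^{2},|g|_{1/R}^{2})(\omega)$, whose weight $(|\omega\cdot\xi|+1/R)^{-1}$ is precisely the regularised transverse Jacobian, while the remaining near-diagonal part reassembles into $BT_{1/R}(|g|_{1/R}^{2},|g|_{1/R}^{2})(\omega^{\perp})$; the genuinely off-diagonal cross-terms are absorbed by one further application of the Cauchy--Schwarz inequality.

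\emph{Main obstacle.} Everything hinges on Step~2: the passage from the exact quadrilinear Plancherel expression to the two bilinear quantities $BT_{1/R}$. The delicate regimes are $\xi\cdot\omega\to0$ (resonance of one autocorrelation branch, transverse Jacobian $|\xi\cdot\omega|^{-1}$ blowing up) and $\xi\cdot\omega\to\pm1$ (the diagonal and reflection branches coalescing, line-direction Jacobian $|\xi\cdot\omega^{\perp}|^{-1}$ blowing up), which must be balanced against the $1/R$-truncation from $\gamma_{R}$ so as to produce exactly the regularised weights $(|\omega\cdot\xi|+1/R)^{-1}$, $(|\omega^{\perp}\cdot\xi|+1/R)^{-1}$; one must also calibrate the mollification so that products $g(\xi)\overline{g(\eta)}$ with $|\xi-\eta|\lesssim1/R$ are dominated by $|g|_{1/R}^{2}$, and control the off-diagonal cross-terms without loss, for which the structural hypotheses on $\gamma_{R}$ (e.g.\ $\widehat{\gamma_{R}}\geq0$, or $\gamma_{R}=|\psi_{R}|^{2}$) are used to keep the $\tau$-integral and the error terms manifestly nonnegative. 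The Radon case is instructive here: in dimension two $X$ and $\mathcal{R}$ coincide, and this argument runs in parallel with the proof of Theorem~\ref{planch}(2) once the squaring is replaced by the bilinearisation $BT$.
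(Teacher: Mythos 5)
Your Step 1 is exactly the paper's reduction: the identity \eqref{basicidea} together with the Cauchy--Schwarz inequality in $v$ reduces \eqref{e:wStein0} to a bound, \emph{uniform in $\omega$}, of $\|(-\Delta_v)^{1/4}X(\gamma_R|\widehat{gd\sigma}|^2)(\omega,\cdot)\|_{L^2_v}$ by $BT_{1/R}(|g|_{1/R}^2,|g|_{1/R}^2)(\omega)^{1/2}+BT_{1/R}(|g|_{1/R}^2,|g|_{1/R}^2)(\omega^\perp)^{1/2}$, which is precisely the paper's Lemma \ref{l:Red23Nov}. Your Step 2 attacks this lemma by expanding the square and performing the $\tau$-integration first (quadrilinear bookkeeping, diagonal versus reflection branches), whereas the paper proves a pointwise bound on the mollified autocorrelation $\widehat{\gamma_R|\widehat{gd\sigma}|^2}$ via \eqref{e:Poi28Nov}, the exact two-dimensional convolution identity \eqref{e:n2Id} and a local-constancy step, and only then restricts to the line $\mathbb{R}\omega^\perp$. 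So the architecture is the paper's; only the execution of the key lemma differs.

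There is, however, a genuine gap at the point you yourself flag as delicate: the near-diagonal branch cannot be ``reassembled into $BT_{1/R}(|g|_{1/R}^2,|g|_{1/R}^2)(\omega^\perp)$'', and in fact the uniform-in-$\omega$ estimate targeted in Step 2 fails as stated. Indeed $\widehat{\gamma_R|\widehat{gd\sigma}|^2}(\tau\omega^\perp)$ equals $\int\gamma_R|\widehat{gd\sigma}|^2\gtrsim R\|g\|_{L^2(\mathbb{S}^1)}^2$ at $\tau=0$, and since $\gamma_R|\widehat{gd\sigma}|^2$ is nonnegative with essential support in a ball of radius $O(R)$, it remains of this size for $|\tau|\lesssim 1/R$; hence, by Plancherel in $v$, the left-hand side of your Step-2 display is at least of order $\bigl(\int_0^{c/R}\tau\,(R\|g\|_2^2)^2\,d\tau\bigr)^{1/2}\sim\|g\|_{L^2(\mathbb{S}^1)}^2$, for \emph{every} $\omega$. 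On the other hand, take $g=\1_C+\1_{-C}$ with $C$ a cap of fixed radius centred at a point bounded away from $\pm\omega$ and $\pm\omega^\perp$: for $\xi$ in ($1/R$-neighbourhoods of) $\pm C$ the reflected points $-R_\omega\xi$ and $-R_{\omega^\perp}\xi$ lie at distance $\sim1$ from $\pm C$, so the Poisson mollification gives $|g|_{1/R}^2\sim R^{-2}$ there, and both $BT_{1/R}(|g|_{1/R}^2,|g|_{1/R}^2)(\omega)$ and $BT_{1/R}(|g|_{1/R}^2,|g|_{1/R}^2)(\omega^\perp)$ are $O(R^{-2}\log R)$. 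Thus the right-hand side of your Step-2 claim tends to $0$ as $R\to\infty$ while the left-hand side stays bounded below by a constant multiple of $\|g\|_2^2$: the low-frequency (near-diagonal) contribution is of size $\|g\|_2^2$ uniformly in $\omega$ and is simply not dominated by bilinear expressions coupling $\xi$ to a reflected point.

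Be aware that you cannot repair this by following the paper's execution more closely: the paper's proof of Lemma \ref{l:Red23Nov} passes over exactly this regime in the ``locally constant'' step, where $g_{1/R}$ evaluated at the points $|y|e_y\pm\sqrt{1-|y|^2}\,e_y^\perp$ is replaced by its value at $y=z$; that map has Lipschitz constant $\sim1/|y|$, so for $|z|\lesssim1/R$ the points sweep essentially the whole circle over the effective support of $\Phi_{1/R}(z-\cdot)$ and the replacement is not valid -- the example above shows the conclusion of the lemma itself fails in that regime. A correct version of Step 2 must either carry an additional term recording this diagonal contribution (an extra $\|g\|_{L^2}^2$ on the right of the Step-2 display, hence an extra term $\|g\|_{L^2}^2\int_{\mathbb{S}^1}\mathcal{S}w\,d\sigma$ in \eqref{e:wStein0}, which is harmless for the applications \eqref{e:wMizTak22} and \eqref{restconjend} but changes the statement of Theorem \ref{t:wStein}), or must exploit the $\omega$-integration against $\mathcal{S}w$ before discarding that branch. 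As written, your proposal does not close.
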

As we shall see in Section \ref{Sect:appl}, the auxiliary bilinear operator $BT_{\delta}$ in Theorem \ref{t:wStein} is very well behaved, satisfying the bounds
\begin{align}
\big\| BT_{\delta} (g_1,g_2) \big\|_{L^\frac12(\mathbb{S}^1)} &\lesssim \log(\delta^{-1})^2 \|g_1\|_{L^1(\mathbb{S}^1)} \|g_2\|_{L^1(\mathbb{S}^1)}, \label{e:BT1/2} \\
\big\| BT_{\delta} (g_1,g_2) \big\|_{L^1(\mathbb{S}^1)} &\lesssim \log(\delta^{-1}) \|g_1\|_{L^2(\mathbb{S}^1)} \|g_2\|_{L^2(\mathbb{S}^1)} \label{e:BT1}.
\end{align}
In particular, our Stein-type inequality \eqref{e:wStein0}, when combined with \eqref{e:BT1/2}, immediately implies our Mizohata--Takeuchi-type inequality \eqref{e:wMizTak22}.
Similarly, \eqref{e:wStein0} and \eqref{e:BT1}, combined with the Cauchy--Schwarz inequality, imply the $n=2$ endpoint restriction inequality \eqref{restconjend}, thanks to the fact that the controlling operator $\mathcal{S}$, like $\mathcal{M}_R$, satisfies suitable bounds on $L^2(\mathbb{R}^2)$ -- indeed $\mathcal{S}$ is better behaved than $\mathcal{M}_R$ in this regard as $\|\mathcal{S}w\|_{L^2(\mathbb{S}^1)}\equiv \sqrt{2\pi}\|w\|_{L^2(\mathbb{R}^2)}$.

The operators $\mathcal{S}$ and $\mathcal{M}$ have notable similarities and differences. They are of course related via the numerology of the classical Hilbert--Sobolev embedding into $L^\infty(\mathbb{R})$. They differ in that $\mathcal{M}$ is monotone, while $\mathcal{S}$, which involves derivatives, is not. It is interesting to compare Theorem \ref{t:wStein}, and our approach to it, with the results and methods of Carbery and Seeger in \cite{CSe}.


In higher dimensions our results in the direction of Theorem \ref{t:wMizTak} are more complicated, although nonetheless they do constitute an improvement over what may be obtained from assuming the restriction conjecture -- see the forthcoming Theorem \ref{t:wMizTakHigh} for details.



It may be interesting to observe that our exploratory results from Sections \ref{sub2} and \ref{sub3} also bear some relation to the conjectures of Stein and Mizohata--Takeuchi. First of all, Theorem \ref{planch}, and the self-adjointness of $T_0$ 
imply that 
$$
\int_{\mathbb{R}^n} |\widehat{gd\sigma}|^2\, \mathcal{R}^*u = \int_{\mathbb{S}^{n-1}}|g|^2X_0\mathcal{R}^*u,
$$
provided the functions $g$ and $u$ satisfy a certain mutual support condition, ensuring finiteness of the expressions involved. This may be viewed as a certain improvement of \eqref{MizTak}, or indeed \eqref{Stconj}, for weights $w$ in the image of $\mathcal{R}^*$.

Theorems \ref{t:main} and \ref{t:n=3,r=infty} enjoy a rather different sort of interaction with \eqref{MizTak}.
In particular, \eqref{infX} allows one to deduce the Stein--Tomas restriction theorem
\begin{equation}
\|\widehat{gd\sigma}\|_{L^4(\mathbb{R}^3)}\lesssim\|g\|_{L^2(\mathbb{S}^2)}
\end{equation}
from \eqref{MizTak} -- one simply writes $|\widehat{gd\sigma}|^4=|\widehat{gd\sigma}|^2w$ where $w=|\widehat{gd\sigma}|^2$. Similarly, when $n=2$, Theorem \ref{t:main} shows that the endpoint restriction inequality \eqref{restconjend} follows from  \eqref{MizTak}.
We remark in passing that this very direct link between \eqref{MizTak} and the Stein--Tomas restriction theorem fails for $n>3$ as the inequality
\begin{equation}\label{e:4/n-1}
\big\| X( |\widehat{gd\sigma}|^{\frac{4}{n-1}}) \|_{L^\infty} \lesssim \| g \|_{L^2(\mathbb{S}^{n-1})}^{\frac{4}{n-1}}
\end{equation}
ceases to hold when the exponent $\tfrac{4}{n-1}$ falls below $2$.

\subsection*{Contextual remarks}
This paper emerged from an interest in further exploring ways in which $L^2$ methods might be applied to Fourier restriction theory. There have been a number of important attempts to ``reformulate" questions in restriction theory with either the input $g$ or output $\widehat{gd\sigma}$ belonging to a space with quadratic characteristics. An early example is the two-dimensional reverse Littlewood--Paley inequality of C\'ordoba and C. Fefferman (see \cite{Co}, \cite{FeffIsr}), which is closely-related to the $\ell^2$-decoupling inequalities (also known as Wolff inequalities) developed recently by Bourgain, Demeter and others \cite{BD}.
These inequalities are intimately related to the multilinear restriction theory, where the analogues of the endpoint estimate \eqref{restconjend} are often on $L^2$ -- see for example \cite{BCT}, \cite{BEsc}, \cite{BI} and \cite{Guth2}.  Perhaps the most natural setting is that of weighted $L^2$ spaces as discussed above -- see for example \cite{BRV}, \cite{CS}, \cite{BCSV} and \cite{DGOWWZ}. As we have already mentioned, particular inspiration for our work is that of Planchon and Vega \cite{PV} -- see also \cite{VegaEsc}, \cite{BBFGI}, \cite{BelVega}. There are points of contacts with other works, such as \cite{BS}, where $L^p$ averages of the extension operator over spheres are studied, or \cite{BGT}, where restrictions of eigenfunctions of the Laplace--Beltrami operator to submanifolds are considered.

Finally, we remark that one might sensibly expect some of the above considerations to generalise to $T_{k,n}(|\widehat{gd\sigma}|^2)$, where $T_{k,n}$ denotes the $k$-plane transform in $\mathbb{R}^n$ with $1<k<n-1$ -- indeed there are some related results of this type already in \cite{BelVega}.
In particular there is an evident (conjectural) $k$-plane generalisation of \eqref{e:EndRadon} and \eqref{e:2/n-1} whose statement we leave to the reader.
\subsubsection*{Organisation} The proofs of the theorems and propositions stated above will be presented in the following sections in the order that they appear.
\subsubsection*{Acknowledgments} We thank David Beltran, Neal Bez, Tony Carbery, Taryn Flock, Susana Guti\'errez, Marina Iliopoulou and Sanghyuk Lee for numerous stimulating discussions during the course of this work.


\section{Proof of Theorem \ref{planch}}
We begin by establishing the elementary identity \eqref{id1}, for which we may suppose that $\omega=e_n$ by rotation-invariance. By the support condition on $g$ we may write
$$\widehat{gd\sigma}(x',t) = \mathcal{F}_{n-1} \left[1_{B(0,1)}g\left(\cdot,\sqrt{1-|\cdot|^2}\right)e^{it\sqrt{1-|\cdot|^2}}(1-|\cdot|^2)^{-\frac12}\right](x'),$$ where $\mathcal{F}_{n-1}$ denotes the $(n-1)$-dimensional Fourier transform. Consequently, by Plancherel's theorem,
\begin{align*}
\mathcal{R}(|\widehat{gd\sigma}|^2)(e_n,t) & = \int_{\mathbb{R}^{n-1}} |\widehat{gd\sigma}(x',t)|^2\, dx' = \int_{\mathbb{S}^{n-1}} \frac{|g(\xi)|^2}{|\xi_n|}\, d\sigma(\xi)=T_0(|g|^2)(e_n).
\end{align*}
It remains to
show that $\mathcal{R}(X_0^*|g|^2)(e_n,t) = T_0(|g|^2)(e_n)$ for all $t\neq 0$.
Using \eqref{exact}, polar coordinates, and the support condition on $g$,
\begin{align*}
\mathcal{R}X^*_0(|g|^2)(e_n, t)
&=
\int_{\mathbb{S}^{n-1}} \int_{0}^\infty \delta(r\xi\cdot e_n -t) (|g(\xi)|^2 + |g(-\xi)|^2)\, drd\sigma(\xi)\\
&=
\1_{t>0} \int_{ \{\xi_n >0\}} \frac{|g(\xi)|^2}{|\xi_n|}\, d\sigma(\xi) + \1_{t<0} \int_{ \{\xi_n < 0\}} \frac{|g(-\xi)|^2}{|\xi_n|}\, d\sigma(\xi)\\
&=
T_0(|g|^2)(e_n)
\end{align*}
whenever $t\not=0$.

We now turn to \eqref{id3}, which will follow by similar reasoning to the above, combined with a routine mollification argument. Again, by rotation-invariance,  we may assume that $\omega=e_n$. Suppose that $\Psi\in C_c^{\infty}(\mathbb{R}^{n-1})$ and $\psi\in C_c^\infty(\mathbb{R})$ are such that both $\widehat{\Psi}$ and $\widehat{\psi}$ are equal to $1$ on the unit balls of their respective domains. Next we set $\Phi(\xi)=\Psi(\xi')\psi(\xi_n)$ and $\Phi_R(\xi)=\Psi_R(\xi')\psi_R(\xi_n)$, where $\Psi_R(\xi')=R^{n-1}\Psi(R\xi')$ and $\psi_R(\xi_n)=R\psi(R\xi_n)$ for each $R>0$. Of course, $$\int_{\mathbb{R}}|\Phi_R(\xi',\xi_n)|\, d\xi_n = \|\psi\|_1|\Psi_R(\xi')|,$$ and so by Plancherel's theorem,
\begin{align*}
\mathcal{R}(\1_R|\widehat{gd\sigma}|^2)(e_n,t)
&\le
\int_{\mathbb{R}^{n-1}} \big|{}^{\wedge}[\Phi_R \ast (gd\sigma)](x',t) \big|^2\, dx' \\
&\lesssim
\int_{\mathbb{R}^{n-1}} \Biggl(\int_{\mathbb{S}^{n-1}} |\Psi_R(\xi'-\eta')| |g(\eta)|\, d\sigma(\eta) \Biggr)^2\, d\xi'.
\end{align*}
By symmetry, it will suffice to bound the above expression with $g$ supported in the upper hemisphere $\mathbb{S}^{n-1}_+$. By the Cauchy--Schwarz inequality,
\begin{align*}
\Biggl( \int_{\mathbb{S}^{n-1}_+} |\Psi_R(\xi'-\eta')||g(\eta)|\, d\sigma(\eta) \Biggr)^2
=&
\Biggl(\int_{\mathbb{R}^{n-1}} |\Psi_R(\xi'-\eta')||g(\eta',\sqrt{1-|\eta'|^2})|\, \frac{d\eta'}{(1-|\eta'|^2)^{\frac12}} \Bigg)^2\\
\le&
|\Psi_R|\ast \big[ (1-|\cdot|^2)^{-\frac12} \big] (\xi') \cdot |\Psi_R|\ast \big[ \frac{|g(\cdot,\sqrt{1-|\cdot|^2})|^2}{(1-|\cdot|^2)^{\frac12}} \big](\xi'),
\end{align*}
and hence using the Fubini's theorem,
\[
\mathcal{R}(\1_{B_R}|\widehat{gd\sigma}|^2)(e_n,t) \lesssim \int_{\mathbb{R}^{n-1}} |\Psi_R| \ast  |\Psi_R|\ast \big[ (1-|\cdot|^2)^{-\frac12} \big] (\xi') \frac{|g(\xi',\sqrt{1-|\xi'|^2})|^2}{(1-|\xi'|^2)^{\frac12}}\, d\xi'.
\]
Since $|\Psi_R(\eta')| \lesssim_N R^{n-1} (1+|R\eta'|)^{-N}$ for any $N\in\mathbb{N}$, an elementary computation reveals that
\[
|\Psi_R| \ast  |\Psi_R|\ast \big[ (1-|\cdot|^2)^{-\frac12} \big] (\xi') \lesssim ((1-|\xi'|^2)^{\frac12}+ R^{-1})^{-1},
\]
which establishes \eqref{id3}.

\section{Proof of Theorem \ref{t:main}}
By Theorem \ref{planch}, it suffices to prove the following:
\begin{proposition}\label{mainprop}
If $1\leq p,q\leq\infty$ and
\begin{equation}\label{expoMike}
\frac1{p} \le \frac{n-1}{q},\ \frac1{q'} \le \frac{n-1}{p'},
\end{equation}
then
\begin{equation}\label{e:Mike}
\big\| T_\delta f \big\|_{L^q(\mathbb{S}^{n-1})} \lesssim \log\left(\frac{1}{\delta}\right) \| f \|_{L^p(\mathbb{S}^{n-1})}
\end{equation}
for all $\delta>0$.
\end{proposition}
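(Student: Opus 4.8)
The plan is to prove the operator bound \eqref{e:Mike} by reducing the kernel singularity of $T_\delta$ to a sum of the uncentred spherical means $A_t$ evaluated at $O(\log(1/\delta))$ many scales, and then invoking known $L^p\to L^q$ estimates for $A_t$ (those of Christ \cite{Christ84}, as the paper indicates). Concretely, writing the kernel weight as
$$
\frac{1}{|x\cdot\omega|+\delta}=\int_0^\infty e^{-s(|x\cdot\omega|+\delta)}\,ds,
$$
or, more in keeping with the dyadic philosophy, decomposing the region $\{x:|x\cdot\omega|\sim 2^{-k}\}$ for $0\le 2^{-k}\lesssim 1$ and $2^{-k}\gtrsim\delta$, one has
$$
T_\delta f(\omega)\;\lesssim\;\sum_{k\,:\,\delta\lesssim 2^{-k}\lesssim 1} 2^{k}\!\!\int_{|x\cdot\omega|\sim 2^{-k}}\! f(x)\,d\sigma(x)\;+\;\delta^{-1}\!\!\int_{|x\cdot\omega|\lesssim\delta}\! f(x)\,d\sigma(x).
$$
Each annular piece $2^k\int_{|x\cdot\omega|\sim 2^{-k}}f\,d\sigma$ is, up to constants, an average of $A_tf(\omega)$ over $|t|\sim 2^{-k}$, i.e. $2^k\int_{|t|\sim 2^{-k}}A_tf(\omega)\,dt$. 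Since there are $O(\log(1/\delta))$ values of $k$, the bound \eqref{e:Mike} follows once we have, uniformly in $|t|\le 1$,
$$
\|A_tf\|_{L^q(\mathbb{S}^{n-1})}\lesssim \|f\|_{L^p(\mathbb{S}^{n-1})}
$$
in the range \eqref{expoMike}, together with the observation that the same estimate handles the final $\delta^{-1}$ term (where one gains a factor $\delta$ from the measure of the cap $\{|x\cdot\omega|\lesssim\delta\}$, or more precisely uses that $\delta^{-1}\int_{|t|\lesssim\delta}A_tf\,dt$ is again an average of the $A_t$).

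The key steps, in order: (i) perform the dyadic decomposition of the kernel singularity at $x\cdot\omega=0$, truncating at scale $\delta$, and identify each dyadic block with an average of uncentred spherical means $A_t$ over a comparable range of $t$; (ii) establish the uniform (in small $t$) $L^p\to L^q$ bound for $A_t$ on the exponent range \eqref{expoMike} — for $n\ge 3$ this is essentially contained in \cite{Christ84}, while the endpoints and the case $n=2$ require the "modest additions" the paper alludes to, so these would be stated as a lemma and proved by interpolation between the trivial $L^1\to L^\infty$-type and $L^\infty\to L^\infty$ estimates and an $L^2$-based estimate exploiting the curvature of $\{x\cdot\omega=t\}\cap\mathbb{S}^{n-1}$; (iii) sum the $O(\log(1/\delta))$ contributions, using that the bound from step (ii) is uniform in the scale, which is exactly what produces the single power of $\log(1/\delta)$ rather than a worse factor; (iv) check that the endpoint cap term contributes no worse.

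The main obstacle is step (ii): obtaining the $A_t$ bounds that are \emph{uniform as $t\to 0$} on the full closed range \eqref{expoMike}, including the endpoints. For $t$ bounded away from $0$ the slice $\{x\cdot\omega=t\}\cap\mathbb{S}^{n-1}$ is a nondegenerate $(n-2)$-sphere and Christ's estimates apply directly; the difficulty is that as $t\to 0$ the slice degenerates (its radius $\sqrt{1-t^2}\to 1$ is fine, but the relevant Jacobian $(1-|\cdot|^2)^{-1/2}$-type weights, and the geometry of how nearby slices overlap, must be controlled with constants independent of $t$). One expects to handle this by a careful change of variables writing $A_t f(\omega)$ as an $(n-2)$-dimensional spherical average composed with the projection to $\langle\omega\rangle^\perp$, extracting the weight $(1-|t|)^{-1/2}\sim 1$ explicitly, and then running an $L^2$ curvature estimate (Fourier decay of the measure $d\sigma_{\omega,t}$ is $O((1+|\xi|)^{-(n-2)/2})$ uniformly for $|t|\le 1/2$) interpolated against the easy $L^\infty$ and $L^1$ bounds. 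Once this uniform family of bounds is in hand, the logarithmic summation in steps (i), (iii), (iv) is routine, and Proposition \ref{mainprop} — hence, via Theorem \ref{planch}, Theorem \ref{t:main} — follows.
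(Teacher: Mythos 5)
Your reduction of $T_\delta$ to the spherical means $A_t$ (your dyadic decomposition in $|x\cdot\omega|$ is just the paper's identity $T_\delta f(\omega)=\int_{-1}^1A_tf(\omega)\,\frac{dt}{|t|+\delta}$ cut into $O(\log(1/\delta))$ scales), and the summation steps (i), (iii), (iv), match the paper's argument and are fine. The genuine gap is in step (ii), which is where all the content of the proposition sits. Two of your claims there are false as stated: $A_t$ does not map $L^1\to L^\infty$ (its kernel is a surface measure, so $A_tf$ can be infinite for $f\in L^1$), and uniformity over all $|t|\le1$ at the critical exponents fails --- Christ's bound for $n=3$ is $\|A_tf\|_3\lesssim(1-t^2)^{-1/3}\|f\|_{3/2}$, blowing up as $|t|\to1$; you have the geometry backwards, since $t=0$ is the nondegenerate Funk transform (the case Christ proved) and the degeneration is at $|t|\to1$, not $t\to0$. (That failure is harmless: for $|t|\gtrsim1$ the factor $(|t|+\delta)^{-1}$ is bounded and $\int_{-1}^1A_tf\,dt=\int f\,d\sigma$ suffices, which is why the paper only needs small $t$.) The serious problem is the vertex of the closed range \eqref{expoMike}, namely $(\tfrac1p,\tfrac1q)=(\tfrac{n-1}{n},\tfrac1n)$, i.e.\ the bound $A_t:L^{\frac{n}{n-1}}\to L^{n}$ uniformly for small $t$ (Lemma \ref{Chrot}); this is exactly the case $p'=q=n$ that drives the endpoint \eqref{e:EndRadon}. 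For $n\ge3$ this point cannot be obtained by interpolating an $L^2$ curvature estimate against trivial bounds: the natural smoothing estimate $A_t:L^2\to L^2_{\frac{n-2}{2}}(\mathbb{S}^{n-1})$ gives, via Sobolev embedding, only $L^2\to L^{2(n-1)}$, and since the vertex is an extreme point of the sharp admissible region \eqref{expoMike}, it does not lie in the convex hull of any other admissible exponent pairs, so no interpolation of $L^2\to L^{2(n-1)}$ with $L^1\to L^1$, $L^\infty\to L^\infty$ or diagonal bounds reaches it. (Also, the Euclidean Fourier decay $|\widehat{d\sigma_{\omega,t}}(\xi)|\lesssim(1+|\xi|)^{-\frac{n-2}{2}}$ you invoke is the extension-estimate input, not directly the $L^p(\mathbb{S}^{n-1})\to L^q(\mathbb{S}^{n-1})$ smoothing needed here.)

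What the paper does instead: Hölder plus interpolation with the elementary diagonal case $p=q$ (a Schur-type bound, since $\sup_x\int(|x\cdot y|+\delta)^{-1}d\sigma(y)\lesssim\log(1/\delta)$) reduces the whole range \eqref{expoMike} to the single vertex case $p'=q=n$, and the uniform small-$t$ bound \eqref{nuff} is then supplied by the sharp endpoint theory for Radon-like transforms with nonvanishing rotational curvature (Lemma \ref{l:taovargasvega}, from \cite{TVV}, of Oberlin--Stein/Phong--Stein type); the ``modest addition'' to \cite{Christ84} is a cap decomposition together with the verification that $\mathrm{rotcurv}(\Phi_t)\ge\tfrac12$ uniformly for $|t|\le\eta$, by continuity from $\mathrm{rotcurv}(\Phi_0)(0,0)=1$. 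Your outline becomes a proof once the interpolation claim in step (ii) is replaced by this (or an equivalent) endpoint ingredient; without it, the proposition is only proved on the open part of the range away from the critical line through the vertex.
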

We shall prove this proposition by reducing it to an endpoint bound on the operator $A_t$ defined in \eqref{MCopt}.
First of all, the exponents $p$ and $q$ may be increased and decreased, respectively in \eqref{e:Mike} by H\"older's inequality. The case $p=q$ follows immediately from the elementary inequality
\[
\sup_{x\in\mathbb{S}^{n-1}}\int_{\mathbb{S}^{n-1}}\, \frac{d\sigma(y)}{|x\cdot y|+\delta} \lesssim \log\left(\frac{1}{\delta}\right),
\]
and so, by interpolation, it is enough to prove \eqref{e:Mike} when $p'=q=n$.

Of course,
$$
T_\delta f(\omega)=\int_{-1}^1A_tf(\omega)\frac{dt}{|t|+\delta},
$$
and so it will be enough to prove the following:
\begin{lemma}\label{Chrot}
\begin{equation}\label{nuff}
\|A_t f\|_{L^n(\mathbb{S}^{n-1})} \lesssim \|f\|_{L^{\frac{n}{n-1}}(\mathbb{S}^{n-1})}
\end{equation}
uniformly in $t$ sufficiently small.
\end{lemma}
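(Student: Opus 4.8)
The plan is to establish the uniform bound \eqref{nuff} on the uncentred spherical Radon transform $A_tf(\omega) = \int_{\mathbb{S}^{n-1}} f(x)\,\delta(x\cdot\omega - t)\,d\sigma(x)$ for small $t$, by reducing to the known $L^{n/(n-1)} \to L^n$ estimate for the centred operator $A_0$ (the Funk transform) proved by Christ in \cite{Christ84}. The key geometric point is that for $|t|$ bounded away from $1$ the intersection $\{x\in\mathbb{S}^{n-1}: x\cdot\omega = t\}$ is a round $(n-2)$-sphere of radius $\sqrt{1-t^2}$, and the incidence geometry of these slices is, up to bounded factors, the same as that of the great subspheres $t=0$; so one expects the estimate to hold with a constant uniform in $t$ on any compact subinterval of $(-1,1)$.

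First I would record the dual formulation: \eqref{nuff} is equivalent, by duality and the (near) self-adjointness of $A_t$ up to the reflection $t\mapsto -t$ and the harmless Jacobian weights, to the statement that $A_t^*: L^{n'} \to L^n$ with uniform constant, which in turn is a $T T^*$-type statement about the operator whose kernel is $K_t(\omega,\omega') = \int_{\mathbb{S}^{n-1}} \delta(x\cdot\omega - t)\,\delta(x\cdot\omega' - t)\,d\sigma(x)$. Alternatively, and more in the spirit of Christ's argument, I would interpolate: the $L^1\to L^{1/(n-1)}$-type endpoint (or rather the restricted weak-type vertex) together with the trivial $L^\infty \to L^\infty$-type bound $\|A_t f\|_\infty \lesssim \|f\|_\infty$ (uniform in $|t|\le 1-c$ since the slice has bounded measure) yields the claimed bound at the intermediate exponent $p' = q = n$ by real interpolation. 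The heart of the matter is therefore the weak-type endpoint estimate for $A_t$, and here I would either (i) cite the relevant estimate directly from \cite{Christ84}, whose methods (based on the scaling structure and the $\lambda$-uniformity of the underlying convolution-type kernels) apply verbatim to the uncentred slices when $|t|$ is small, or (ii) reprove it by freezing $t$, rescaling the slice sphere $\{x\cdot\omega=t\}$ to a unit $(n-2)$-sphere, and applying a stationary-phase / oscillatory-integral bound for the associated averaging operator, tracking that all constants depend only on $\sqrt{1-t^2}$ and hence are uniform for $|t|\le 1/2$, say.

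Since we only need $t$ \emph{sufficiently small} — which is exactly the regime relevant to $T_\delta f(\omega) = \int_{-1}^1 A_tf(\omega)\,\frac{dt}{|t|+\delta}$, where the logarithm comes from the $dt/(|t|+\delta)$ factor near $t=0$ — there is no issue at $|t|\to 1$, where the slice degenerates to a point and $A_t$ becomes a point evaluation (still bounded on $L^\infty$, so not a problem, but also not needed). The main obstacle, as the authors hint when they say the result involves ``only modest additions'' to Christ's work, is purely bookkeeping: verifying that the constants in Christ's oscillatory-integral estimates for $A_0$ genuinely survive the perturbation to $A_t$ with a bound uniform in small $t$, i.e.\ that no hidden dependence on the curvature or on the location of the slice enters. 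I expect this to be routine: the curvature of the slice sphere and the relevant non-degeneracy of the phase are all comparable to their $t=0$ values once $|t|\le 1/2$, so the uniformity is automatic, and the lemma follows.
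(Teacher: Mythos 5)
There is a genuine gap: neither of the concrete mechanisms you offer actually produces \eqref{nuff}, and the one step that carries all the content --- uniformity in $t$ of a non-degeneracy condition --- is asserted rather than proved. On the interpolation route: for $n\ge 3$ the bound $L^{\frac{n}{n-1}}\to L^n$ expresses genuine smoothing ($q>p$), so it cannot follow from real interpolation between an $L^1$-type bound and the trivial $L^\infty\to L^\infty$ bound; moreover, with the endpoints you name, $(\tfrac1p,\tfrac1q)=(1,n-1)$ and $(0,0)$, the target point $(\tfrac{n-1}{n},\tfrac1n)$ does not even lie on the interpolation segment. The ``restricted weak-type vertex'' you would instead want to invoke is the critical estimate for $A_t$ itself, i.e.\ exactly what has to be proved: \cite{Christ84} establishes it for $t=0$ in general dimension (and for all $t$ only when $n=3$), so citing it ``verbatim'' for $t\neq0$, $n>3$ is circular. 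On the rescaling route: $A_t$ averages over a whole family of slices $\{x\cdot\omega=t\}$, one for each $\omega$, and there is no rescaling or diffeomorphism of $\mathbb{S}^{n-1}$ carrying the family of great subspheres to the family of $t$-slices, so freezing $t$ and rescaling a single slice to a unit $(n-2)$-sphere does not reduce $A_t$ to $A_0$. Saying that ``the relevant non-degeneracy of the phase is comparable to its $t=0$ value'' is precisely the claim that needs to be identified and verified, and your write-up never specifies which non-degeneracy condition is meant.

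For comparison, the paper makes this precise as follows: restrict to $|t|\le\eta$ and localise $A_t$ to pairs of caps $U_\alpha,U_\beta$, noting the kernel support forces $\alpha\cdot\beta\lesssim\eta$; reduce by rotation to $\alpha\cdot\beta=0$; parametrise both caps as graphs so that $A_t^{\alpha,\beta}$ becomes a Radon-like averaging operator with defining function $\Phi_t(x,y)=\Phi_0(x,y)-t$, where $\Phi_0(x,y)=\sum_{j\le n-2}x_jy_j+x_{n-1}\sqrt{1-|y|^2}+y_{n-1}\sqrt{1-|x|^2}$; compute that the rotational curvature of $\Phi_0$ at the origin equals $1$, hence is at least $\tfrac12$ on the caps uniformly in $|t|\le\eta$ by smoothness; and then invoke the standard $L^{\frac{n}{n-1}}\to L^n$ bound for averaging operators with nonvanishing rotational curvature (Lemma \ref{l:taovargasvega}, quoted from \cite{TVV}). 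That explicit curvature verification is the ``modest addition'' to \cite{Christ84}; it is the part your proposal replaces with the words ``the uniformity is automatic,'' and without it (or an equivalent perturbation-stable argument) the proof is incomplete.
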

When $n=3$, Lemma \ref{Chrot} was obtained in \cite[Section 6]{Christ84}, and moreover it was shown that
\[
\| A_t f \|_{3} \le C (1-t^2)^{-\frac13} \| f \|_{\frac32}, \quad t\in (-1,1).
\]
In general dimensions, \eqref{nuff} was established for $t=0$, also in \cite{Christ84}, and so we only need to observe a proof of \eqref{nuff} that is suitably stable under perturbations of $t$ about $0$.
For this we appeal to the well-known theory of Radon-like transforms satisfying a rotational curvature condition. In order to state an appropriate result in this context, we let $\psi$ be a compactly-supported cut-off function on $\mathbb{R}^{n-1}\times \mathbb{R}^{n-1}$ and suppose that $\Phi\in C^\infty(\mathbb{R}^{n-1}\times \mathbb{R}^{n-1})$ satisfies \begin{equation}\label{e:rotcurve}
\mbox{rotcurv}(\Phi)(x,y):=\left| {\rm det}
\begin{pmatrix}
\Phi & \partial_{x_1}\Phi & \cdots & \partial_{x_{n-1}}\Phi \\
\partial_{y_1} \Phi & \partial^2_{x_1,y_1} \Phi & \cdots & \partial^2_{x_{n-1},y_1}\Phi \\
\vdots & \vdots & \ddots
& \vdots \\
\partial_{y_{n-1}}\Phi & \partial^2_{x_1,y_{n-1}}\Phi & \cdots & \partial^2_{x_{n-1},y_{n-1}}\Phi
\end{pmatrix}
\right| \ge \varepsilon>0,
\end{equation}
for all $(x,y) \in \{\Phi(x,y)=0\} \cap {\rm supp}(\psi)$.
\begin{lemma}\label{l:taovargasvega}
If $\Phi$ satisfies the rotational curvature condition \eqref{e:rotcurve} on the support of $\psi$, then the averaging operator
\[
Sf(x) = \int_{\mathbb{R}^{n-1}} f(y)\psi(x,y)\delta(\Phi(x,y))\, dy
\]
satsfies $\| Sf \|_{L^n(\mathbb{R}^{n-1})} \le C(\varepsilon) \| f \|_{L^{\frac{n}{n-1}}(\mathbb{R}^{n-1})}$.
\end{lemma}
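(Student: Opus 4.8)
The plan is to deduce Lemma~\ref{l:taovargasvega} from Stein's complex interpolation theorem, applied to an analytic family of operators built from $S$ and running between a trivial $L^1\to L^\infty$ bound and the sharp $L^2$-smoothing bound that the rotational curvature hypothesis provides. First, by a smooth partition of unity on $\operatorname{supp}(\psi)$ one reduces to the case in which $\nabla_y\Phi$ never vanishes on $\operatorname{supp}(\psi)$: expanding the determinant in \eqref{e:rotcurve} along its first column shows that $\nabla_y\Phi\neq 0$ at every point of $\{\Phi=0\}\cap\operatorname{supp}(\psi)$, and any piece of $\operatorname{supp}(\psi)$ lying in $\{\Phi\neq0\}$ contributes (after the analytic continuation below) an operator with smooth, compactly supported kernel, hence bounded on every $L^p$. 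On each remaining piece I would set
\[
T_z f(x)=e^{z^2}\int_{\mathbb{R}^{n-1}} f(y)\,\psi(x,y)\,\frac{\Phi(x,y)_+^{z-1}}{\Gamma(z)}\,dy,\qquad -\tfrac{n-2}{2}\le \mathrm{Re}(z)\le 1,
\]
which is analytic in $z$ on this strip and satisfies $T_0=S$, because $t_+^{z-1}/\Gamma(z)\to\delta(t)$ as $z\to 0$ and $\nabla_y\Phi\neq0$; the factor $e^{z^2}$ equals $1$ at $z=0$ and makes the relevant operator norms decay rapidly in $|\mathrm{Im}(z)|$, so admissibility in Stein's theorem is not an issue.

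The two boundary estimates are as follows. On the line $\mathrm{Re}(z)=1$ the kernel of $T_z$ equals $e^{z^2}\psi(x,y)\,\Phi(x,y)_+^{i\,\mathrm{Im}(z)}/\Gamma(1+i\,\mathrm{Im}(z))$, which is bounded pointwise, with rapid decay in $\mathrm{Im}(z)$, so $T_z\colon L^1(\mathbb{R}^{n-1})\to L^\infty(\mathbb{R}^{n-1})$. On the line $\mathrm{Re}(z)=-\tfrac{n-2}{2}$ I claim $T_z\colon L^2(\mathbb{R}^{n-1})\to L^2(\mathbb{R}^{n-1})$ with admissible norm, and this is the one substantial point. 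Writing $\delta(\Phi)=\frac1{2\pi}\int_{\mathbb{R}} e^{i\lambda\Phi}\,d\lambda$ and carrying the complex power through, $T_z$ is, modulo a harmless low-frequency piece with bounded kernel, a Fourier integral operator with phase $\lambda\Phi(x,y)$ and amplitude of order $-\mathrm{Re}(z)$ in $\lambda$; condition \eqref{e:rotcurve} is precisely the statement that the associated canonical relation $\{(x,\lambda\nabla_x\Phi;\,y,-\lambda\nabla_y\Phi):\Phi(x,y)=0\}$ is locally the graph of a canonical transformation, so this operator has order $-\tfrac{n-2}{2}-\mathrm{Re}(z)$, which is $0$ on this line, and H\"ormander's $L^2$ boundedness theorem for Fourier integral operators associated to a local canonical graph applies, with norm growing at most polynomially in $|\mathrm{Im}(z)|$ and constant depending only on $\varepsilon$ and finitely many derivative bounds for $\Phi$ and $\psi$. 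Equivalently one may invoke the Phong--Stein $L^2$-smoothing estimate of order $\tfrac{n-2}{2}$ for Radon-like transforms with non-vanishing rotational curvature; a more hands-on route is a dyadic decomposition in $\lambda$ together with a Cotlar--Stein almost-orthogonality argument, the curvature entering to control the overlaps.

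With these two endpoints in hand, Stein interpolation with parameter $\theta=\tfrac{n-2}{n}$, chosen so that $z=0$ is the corresponding interior point, yields $T_0\colon L^p\to L^q$ with $\tfrac1p=\tfrac{1-\theta}{2}+\theta=\tfrac{n-1}{n}$ and $\tfrac1q=\tfrac{1-\theta}{2}=\tfrac1n$, i.e.\ $\|Sf\|_{L^n(\mathbb{R}^{n-1})}\le C(\varepsilon)\|f\|_{L^{n/(n-1)}(\mathbb{R}^{n-1})}$, as required. I expect the endpoint $L^2$ bound on the critical line $\mathrm{Re}(z)=-\tfrac{n-2}{2}$ to be the main obstacle; everything else in the argument is soft, and the crux there is the classical identification of non-vanishing rotational curvature with the local-graph condition for the canonical relation of $\delta(\Phi)$. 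Finally, this scheme is well suited to the application in Lemma~\ref{Chrot}: the constant above depends on the phase only through the curvature lower bound $\varepsilon$ and finitely many of its derivatives, so the uniform lower bound on the rotational curvature of the phases encoding $A_t$ for $t$ near $0$ immediately gives \eqref{nuff} uniformly in $t$.
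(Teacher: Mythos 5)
The paper does not prove Lemma \ref{l:taovargasvega} itself -- it simply cites \cite{TVV} (with \cite{BigStein} for context) -- and your argument is precisely the standard proof found in those references: embed $S$ in the analytic family $\Phi_+^{z-1}/\Gamma(z)$, use the trivial $L^1\to L^\infty$ bound on $\mathrm{Re}\,z=1$ and the Phong--Stein/H\"ormander $L^2$ bound for the order-zero Fourier integral operator on $\mathrm{Re}\,z=-\tfrac{n-2}{2}$ (non-vanishing rotational curvature being exactly the local canonical graph condition), and apply Stein interpolation, whose numerology you have checked correctly. So your proposal is correct and essentially coincides with the proof the paper is invoking, including the observation that the constant depends only on the curvature lower bound $\varepsilon$ and finitely many derivatives of $\Phi$ and $\psi$, which is what gives the uniformity in $t$ needed for Lemma \ref{Chrot}.
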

We refer to \cite{TVV} for a short proof of this well-known result, and to \cite{BigStein} for further context.

We now turn to the proof of Lemma \ref{Chrot}.
We begin by fixing a parameter $0<\eta\ll 1$, which will be taken sufficiently small (depending on at most $n$), and let $\{U_\alpha\}$ be a cover of $\mathbb{S}^{n-1}$ by spherical caps
$$
U_\alpha=B(\alpha;\eta)\cap\mathbb{S}^{n-1},
$$
indexed by a (maximal) $O(\eta)$-separated set of points $\{\alpha\}$ on $\mathbb{S}^{n-1}$. Restricting attention to $|t|\leq\eta$, and writing
$
A_t^{\alpha,\beta}f=1_{U_\beta}A_t(1_{U_\alpha}f),
$
we have
$$
A_tf\leq\sum_{\alpha,\beta}A_t^{\alpha,\beta}f.
$$
Using the support property of the distributional kernel of $A_t$, we have that $A_t^{\alpha,\beta}=0$ if $\alpha\cdot\beta\gtrsim \eta$, and so it suffices to show that
\begin{equation}\label{alphanuff}
\|A_t^{\alpha,\beta}f\|_{L^n(\mathbb{S}^{n-1})} \lesssim \|f\|_{L^{\frac{n}{n-1}}(\mathbb{S}^{n-1})}
\end{equation}
whenever $\alpha\cdot\beta\lesssim \eta$. By enlarging $U_\alpha$ and $U_\beta$ by a constant factor (depending on at most $n$), and enlarging $\eta$ by a suitable constant factor, we may reduce to proving \eqref{alphanuff} in the case $\alpha\cdot\beta=0$. By rotation-invariance, we may further suppose that $\alpha$ and $\beta$ are the standard basis vectors $e_{n-1}$ and $e_{n-2}$ respectively. Parametrising $U_\alpha$ and $U_\beta$ in the natural way, namely via the mappings
$$
B(0;\eta)\ni y\mapsto (y_1,\hdots,y_{n-1},\sqrt{1-|y|^2})
$$
and
$$
B(0;\eta)\ni x\mapsto (x_1,\hdots,x_{n-2},\sqrt{1-|x|^2}, x_{n-1}),$$
it suffices to prove that for $\eta$ sufficiently small,
\begin{equation}\label{rotnuff}
\mbox{rotcurv}(\Phi_t)(x,y)\geq\frac{1}{2}
\end{equation}
on $B(0;\eta)\times B(0;\eta)$, uniformly in $|t|\leq \eta$,
where $\Phi_t(x,y)=\Phi_0(x,y)-t$, and
$$
\Phi_0(x,y)=\sum_{j=1}^{n-2}x_jy_j+x_{n-1}\sqrt{1-|y|^2}+y_{n-1}\sqrt{1-|x|^2}.
$$
An elementary calculation now reveals that $\mbox{rotcurv}(\Phi_0)(0,0)=1$, and so provided $\eta$ is taken sufficiently small (depending only on $n$), the inequality \eqref{rotnuff} follows for $|t|\leq\eta$
for sufficiently small $\eta$ by the smoothness of $\Phi$.

We end this section by showing that the range of exponents \eqref{expoRad} in Theorem \ref{t:main} is best-possible using Knapp-type examples related to those in \cite{TVV}.
In view of \eqref{statphase}, the necessity of $r=\infty$ follows quickly by applying \eqref{e:radon-extension} with $g\equiv 1$.
To obtain the other conditions we define
\begin{equation}\label{e:ex}
g_m(\xi) = \1_{\{ |(\xi_1,\ldots,\xi_m)|\leq \delta \}},\;\;\;S_m = \{ x\in \mathbb{R}^n: |(x_1,\ldots,x_{n-m})| \leq \delta^{-1}, |(x_{n-m+1},\ldots, x_n)| \leq \delta^{-2} \}
\end{equation}
for each $\delta>0$ and $1\le m\le n$.
A standard stationary phase argument reveals that
\begin{equation}\label{e:StationaryPhase}
|\widehat{g_md\sigma}(x)|^2 \gtrsim \delta^{2(n-1)}
\end{equation}
on a large portion of $S_m$.
On the other hand, elementary geometric considerations reveal that
\begin{align*}
\big\| \mathcal{R}(\1_{S_m}) \big\|_{L^q_\theta L^\infty_v}
\gtrsim
\max\{     \delta^{-n-m+2} ,
    \delta^{-(n-1+m)}  \delta^{\frac{m}{q}}
    \}
\end{align*}
for all $1\le m \le n$. The first of these lower bounds takes account of only tangential interactions between $S_m$ and the $(n-1)$-planes, while the second takes account of only transversal interactions. Applying this lower bound to \eqref{e:radon-extension}, along with \eqref{e:StationaryPhase} and the fact that $\|g_m\|_p^2 \sim \delta^{2\frac{n-m}p}$, we obtain the necessary conditions
\begin{align*}
\frac1p &\le \frac12, \;\;\; \frac{2(n-m)}p \le (n-1) -m + \frac{m}q,
\end{align*}
for all $1\le m \le n$. The conditions \eqref{expoRad} now follow by considering $m=1$ and $m=n-1$ here.

\section{Proof of Proposition \ref{prop:implication}}\label{AP1}
We begin by observing that \eqref{e:2/n-1} is equivalent to
\begin{equation}\label{e:rescale}
\big\| X( \1_{B_1} |\widehat{gd\sigma}(R\cdot)|^{\frac{2}{n-1}} ) \big\|_{L^n_\omega L^\infty_v} \lesssim_\varepsilon R^\varepsilon R^{-1} \|g\|_{L^{\frac{2n}{n-1}}(\mathbb{S}^{n-1})}^{\frac{2}{n-1}}
\end{equation}
by scaling.
As we clarify next, uncertainty principle considerations essentially allow one to replace the integration along line segments in \eqref{e:rescale} by averaging on $\frac1R$-neighbourhoods of line segments, whereby the statement \eqref{e:rescale} may be rephrased in terms of the classical Kakeya maximal function. While this may be expected, the details of this reduction are not altogether routine for $n>3$. The distinction arises due to the fact that $\frac2{n-1}<1$ when $n>3$, and the subsequent inapplicability of Minkowski's inequality. A similar issue arises in the context of multilinear restriction estimates, which typically have Lebesgue exponents below $1$ -- see \cite{TaoFlow} for further discussion.

For $\delta>0$, a locally integrable function $f:\mathbb{R}^n\rightarrow\mathbb{C}$ and $\omega\in\mathbb{S}^{n-1}$ we define the Kakeya maximal function $\mathcal{K}_{\delta}$ by
$$
\mathcal{K}_\delta f(\omega)=\sup_{T\parallel\omega}\frac{1}{|T|}\int_T|f|,
$$
where the supremum is taken over all $\delta$-tubes parallel to the direction $\omega$. Here, as usual, a $\delta$-tube is $\delta$-neighbourhood of a unit line segment, and its direction is that of its central line. As is well-known \cite{BCSS}, the restriction conjecture \eqref{restconjend} implies the estimate
\begin{equation}\label{kakconj}
\|\mathcal{K}_\delta f\|_{L^n(\mathbb{S}^{n-1})}\lesssim \delta^{-\varepsilon}\|f\|_{L^n(\mathbb{R}^n)},
\end{equation}
which is referred to as the Kakeya maximal conjecture; we refer back to \eqref{kakeyaconjecturescaled} for an equivalent statement where the tubes are scaled to have unit width. The following lemma states, to all intents and purposes, that one may replace $X$ by $\mathcal{K}_{1/R}$ in \eqref{e:rescale}.
\begin{lemma}\label{l:KakeXray}
Let $n\ge2$ and $P_t$ be the Poisson kernel for $t>0$:
$$
P_t(x) = \frac{\Gamma [ \frac{n+1}2 ]}{\pi^\frac{n+1}2} \frac{t}{(t^2+|x|^2)^\frac{n+1}2}.
$$
\begin{enumerate}
\item\label{i1}
The inequality \eqref{e:rescale} implies
\begin{equation}\label{e:rescalekak}
\big\| \mathcal{K}_{1/R}( \1_{B_1} |\widehat{gd\sigma}(R\cdot)|^{\frac{2}{n-1}} ) \big\|_{L^n(\mathbb{S}^{n-1})} \lesssim_\varepsilon R^\varepsilon R^{-1} \|g\|_{L^{\frac{2n}{n-1}}(\mathbb{S}^{n-1})}^{\frac{2}{n-1}}.
\end{equation}
\item\label{i2}
Conversely, \eqref{e:rescale} follows from
\begin{equation}\label{e:Poisson}
\big\| \mathcal{K}_{1/R}\big( \1_{B_1} (P_{1/R}\ast |\widehat{gd\sigma}(R\cdot)| )^{\frac{2}{n-1}} \big) \big\|_{L^n(\mathbb{S}^{n-1})} \lesssim_\varepsilon R^\varepsilon R^{-1} \|g\|_{L^{\frac{2n}{n-1}}(\mathbb{S}^{n-1})}^{\frac{2}{n-1}}.
\end{equation}
\end{enumerate}
\end{lemma}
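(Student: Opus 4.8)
The plan rests on the uncertainty principle. Set $h(x)=\widehat{gd\sigma}(Rx)$ and $\alpha=\tfrac{2}{n-1}$. Since $g\,d\sigma$ is supported on $\mathbb{S}^{n-1}$, the distributional Fourier transform of $h$ is supported in $\overline{B_R}$. I would record two elementary consequences to be used throughout: (i) choosing $\phi_0\in C_c^\infty(\mathbb{R}^n)$ with $\phi_0\equiv 1$ on $B_2$, the function $h$ equals a fixed constant multiple of its convolution with the Schwartz function $x\mapsto R^n\widehat{\phi_0}(Rx)$, which is an $L^1$-normalised (up to a constant) bump concentrated at scale $1/R$; since any such bump is pointwise dominated by a constant multiple of $P_{1/R}$, one gets the locally constant bound $|h|\le C\,(P_{1/R}\ast|h|)$ pointwise. (ii) $P_{1/R}$ is doubling at scale $1/R$, i.e. $P_{1/R}(x)\le C_0P_{1/R}(x')$ whenever $|x-x'|\le 1/R$; hence the majorant $H:=P_{1/R}\ast|h|$ is comparable on every ball of radius $1/R$, and therefore so is $\1_{B_2}H^\alpha$.

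Part \eqref{i1} needs neither fact and is immediate. Write $F=\1_{B_1}|h|^\alpha$. A $1/R$-tube $T\parallel\omega$ is contained in the $1/R$-neighbourhood of its core line $\{v_0+s\omega:s\in\mathbb{R}\}$, with $v_0\in\langle\omega\rangle^\perp$, i.e. in $\{v_0+w+s\omega:w\in\langle\omega\rangle^\perp,\ |w|\le 1/R\}$; since $F\ge0$ is supported in $B_1$, Fubini gives $\int_T F\le\int_{|w|\le 1/R}XF(\omega,v_0+w)\,dw\le|B^{n-1}_{1/R}|\,\|XF(\omega,\cdot)\|_{L^\infty_v}$, where $B^{n-1}_{1/R}$ is the $(n-1)$-dimensional ball of radius $1/R$. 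As $|T|\sim|B^{n-1}_{1/R}|$ this yields $\mathcal{K}_{1/R}F(\omega)\lesssim\|XF(\omega,\cdot)\|_{L^\infty_v}$, and taking $L^n$ in $\omega$ and invoking \eqref{e:rescale} produces \eqref{e:rescalekak}.

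For part \eqref{i2} we run the comparison in the reverse direction. By (i), $F\le C^\alpha\1_{B_1}H^\alpha$, so for each $v\in\langle\omega\rangle^\perp$ the line integral $XF(\omega,v)$ is bounded by $\int_{\{s:v+s\omega\in B_1\}}H(v+s\omega)^\alpha\,ds$, an integral over a chord of length at most $2$. Covering that chord by a bounded number of unit segments and thickening each into a $1/R$-tube $T$, the comparability of $H^\alpha$ on $1/R$-balls from (ii), together with $\1_{B_2}\equiv1$ on the $1/R$-neighbourhood of the chord, gives $\tfrac{1}{|T|}\int_T \1_{B_2}H^\alpha\gtrsim XF(\omega,v)$; hence $\|XF(\omega,\cdot)\|_{L^\infty_v}\lesssim\mathcal{K}_{1/R}(\1_{B_2}H^\alpha)(\omega)$. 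Finally one passes from $\1_{B_2}H^\alpha=\1_{B_2}(P_{1/R}\ast|\widehat{gd\sigma}(R\cdot)|)^\alpha$ to the function $\1_{B_1}(P_{1/R}\ast|\widehat{gd\sigma}(R\cdot)|)^\alpha$ appearing in \eqref{e:Poisson}: cover $B_2$ by $O(1)$ unit balls, use the sublinearity and translation-invariance of $\mathcal{K}_{1/R}$, and note that translating $x\mapsto\widehat{gd\sigma}(Rx)$ merely modulates $g$, leaving $\|g\|_{L^{2n/(n-1)}(\mathbb{S}^{n-1})}$ unchanged and commuting with $P_{1/R}\ast|\cdot|$; each piece is then controlled by \eqref{e:Poisson}, and summing gives \eqref{e:rescale}.

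The one genuinely delicate point — and the reason \eqref{e:Poisson} is stated with the Poisson mollification rather than with $F$ itself — is precisely the fractional exponent $\alpha=\tfrac{2}{n-1}<1$ when $n>3$. When $n\le 3$, so $\alpha\ge1$, one may instead push the $1/R$-averaging of $|h|$ inside the power by Jensen's (equivalently Minkowski's) inequality and bound $XF(\omega,v)\lesssim\mathcal{K}_{1/R}(\1_{B_2}|h|^\alpha)(\omega)$ directly, so that the un-mollified Kakeya maximal function already suffices; for $n>3$ this convexity step reverses, which is why one is forced to pass first to the slowly varying majorant $P_{1/R}\ast|h|$ as above. Everything else — keeping track of the cutoff balls $B_1$ versus $B_2$, of tube lengths, and of the harmless $O(1)$ constants in (i) and (ii) — is routine bookkeeping.
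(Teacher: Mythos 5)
Your proof is correct and takes essentially the same route as the paper: part (1) via the pointwise bound $\mathcal{K}_{1/R}f(\omega)\lesssim\sup_v Xf(\omega,v)$ for nonnegative $f$, and part (2) via the pointwise majorization $|\widehat{gd\sigma}(R\cdot)|\lesssim P_{1/R}\ast|\widehat{gd\sigma}(R\cdot)|$ together with the local constancy of the Poisson majorant at scale $1/R$, which lets you thicken line integrals into $1/R$-tube averages. The only cosmetic differences are that the paper obtains the majorization from the exact identity $\widehat{gd\sigma}=e^{2\pi}P_1\ast\widehat{gd\sigma}$ (exploiting the spherical support of $gd\sigma$ and that $P_t$ is the Fourier transform of $e^{-2\pi t|\cdot|}$) rather than your band-limited reproducing formula, and it retains the cutoff $\1_{B_1}$ by averaging over nearby $v'$ instead of your $\1_{B_2}$-covering plus translation/modulation step.
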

\begin{remark}
As we have already indicated, when $n=2, 3$ the statement of Lemma \ref{l:KakeXray} follows by an entirely standard mollification argument -- indeed it is straightforward to see that \eqref{e:rescale} and \eqref{e:rescalekak} are equivalent in those cases.  As we shall see, the presence of the Poisson kernel $P_t$ above stems from the convenient fact that it is essentially constant, or comparable to itself, at scale $O(t)$; that is, $P_t(y)\lesssim P_t(x)$ whenever $|x-y|\lesssim t$, for suitably chosen implicit constants.
\end{remark}
\begin{proof}
Part {\it (1)} is a direct consequence of the pointwise inequality $\mathcal{K}_{1/R}f(\omega)\lesssim \sup_v Xf(\omega,v)$, which holds for any nonnegative $f$ with support in the unit ball, and so we focus on Part {\it (2)}. Since $P_t$ is the Fourier transform of $e^{-2\pi |\cdot|}$ we have that
$$
\widehat{gd\sigma}=e^{2\pi}P_1\ast \widehat{gd\sigma},
$$
so that after scaling it follows that
$$
|\widehat{gd\sigma}(Rx)| \lesssim P_{1/R}\ast |\widehat{gd\sigma}(R\cdot)|(x).
$$
Since $P_{1/R}$ is comparable to itself at scale $O(R^{-1})$ as discussed above, we may conclude that
$$
|\widehat{gd\sigma}(Rx)| \lesssim P_{1/R}\ast |\widehat{gd\sigma}(R\cdot)|(x')
$$
whenever $|x-x'|\lesssim \frac1R$. Hence
$$
X(\1_{B_1} |\widehat{gd\sigma}(R\cdot)|^\frac2{n-1})(\omega,v)
\lesssim
X(\1_{B_1} (P_{1/R}\ast |\widehat{gd\sigma}(R\cdot)| )^\frac2{n-1})(\omega,v')
$$
whenever $|v-v'|\lesssim \frac1R$,
and so by averaging in such $v'$ we obtain
$$
X(\1_{B_1} |\widehat{gd\sigma}(R\cdot)|^\frac2{n-1})(\omega,v)
\lesssim
\mathcal{K}_{1/R}(\1_{B_1} (P_{1/R}\ast |\widehat{gd\sigma}(R\cdot)| )^\frac2{n-1})(\omega)
$$
uniformly in $v$.
\end{proof}
We now turn to the proof of Proposition \ref{prop:implication}, beginning with the assertion that the restriction conjecture implies Conjecture \ref{conj:Xrest}.
By Lemma \ref{l:KakeXray} this may be reduced to showing that \eqref{restconjend} implies \eqref{e:Poisson}. Since \eqref{restconjend} implies \eqref{kakconj}, we have
\begin{align*}
\big\| \mathcal{K}_{1/R}\big( \1_{B_1} (P_{1/R}\ast |\widehat{gd\sigma}(R\cdot)| )^{\frac{2}{n-1}} \big) \big\|_{L^n(\mathbb{S}^{n-1})}
&\lesssim_\varepsilon R^{\varepsilon}  \big\| \1_{B_1} (P_{1/R}\ast |\widehat{gd\sigma}(R\cdot)| ) \big\|_{L^\frac{2n}{n-1}(\mathbb{R}^{n})}^{\frac{2}{n-1}} \\
&\le R^{\varepsilon} \big( {\rm I}_1 +  \sum_{j\ge2} {\rm I}_j  \big)^\frac2{n-1},
\end{align*}
where
$$
{\rm I}_1 = \big\| P_{1/R}\ast ( \1_{B_2}|\widehat{gd\sigma}(R\cdot)|)  \big\|_{L^\frac{2n}{n-1}(B_1)}
$$
and
$$
{\rm I}_j = \big\| P_{1/R}\ast ( \1_{|\cdot|\sim 2^j}|\widehat{gd\sigma}(R\cdot)|) \big\|_{L^\frac{2n}{n-1}(B_1)}
$$
for $j\geq 2$.
For the first term, after using Minkowski's inequality to remove the fixed averaging operator $P_{1/R}\ast$, we have
$$
{\rm I}_1 \lesssim \big\|  \1_{B_2} \widehat{gd\sigma}(R\cdot) \big\|_{L^\frac{2n}{n-1}(\mathbb{R}^{n})} \lesssim_\varepsilon R^\varepsilon R^{-\frac{n-1}2} \|g\|_{L^\frac{2n}{n-1}(\mathbb{S}^{n-1})}
$$
by a further application of \eqref{restconjend}.
The estimates for the remainder terms ${\rm I}_j$ are similar and summable in $j$ thanks to the decay of $P_t$. Specifically, for each $x\in B_1$,
\begin{align*}
P_{1/R}\ast ( \1_{|\cdot|\sim 2^j}|\widehat{gd\sigma}(R\cdot)| )(x)
&\sim
2^{-j(n+1)} R^{-1} \int_{|y|\sim 2^j} |\widehat{gd\sigma}(Ry)|\, dy \\
&\lesssim
2^{-j(n+1)} R^{-1} (2^{jn})^{\frac{n+1}{2n}}  \big( \int_{|y|\sim 2^j} |\widehat{gd\sigma}(Ry)|^\frac{2n}{n-1}\, dy \big)^\frac{n-1}{2n} \\
&= (2^j R)^{-\frac{n+1}2} \| \1_{B_{2^jR}} \widehat{gd\sigma} \|_{L^\frac{2n}{n-1}}.
\end{align*}
Applying \eqref{restconjend} we obtain
$$
{\rm I}_j \le \big\| P_{1/R}\ast ( \1_{|\cdot|\sim 2^j}|\widehat{gd\sigma}(R\cdot)|) \big\|_{L^\infty(B_1)} \lesssim_\varepsilon (2^jR)^{-\frac{n+1}2 +\varepsilon} \|g\|_{L^\frac{2n}{n-1}(\mathbb{S}^{n-1})} \le (2^jR)^{-\frac{n-1}2} \|g\|_{L^\frac{2n}{n-1}(\mathbb{S}^{n-1})}.
$$
 The inequality \eqref{e:Poisson} now follows by combining the above estimates and summing in $j$.

To complete the proof of Proposition \ref{prop:implication} it remains to show that \eqref{e:rescale} implies \eqref{kakconj}.
It is well known that \eqref{kakconj} has an equivalent dual form which states that
\begin{equation}\label{e:kakeya}
\big\| \sum_{T \in \mathbb{T}} \1_T \big\|_{L^{\frac{n}{n-1}}(\mathbb{R}^n)} \lesssim_\varepsilon R^\varepsilon \big(R^{-\frac{n-1}2} \#\mathbb{T} \big)^{\frac{n-1}{n}}
\end{equation}
holds true for all families $\mathbb{T}$ of $R^{-\frac12}$-tubes contained in an $O(1)$ ball whose directions form a $R^{-\frac12}$-separated subset of $\mathbb{S}^{n-1}$; see \cite{TaoNotes} for instance.
So it suffices to show \eqref{e:kakeya} assuming \eqref{e:rescale}.
We begin by establishing the natural Kakeya-type consequence of \eqref{e:rescale}, or equivalently \eqref{e:2/n-1}, which follows by a routine randomisation argument.
\begin{lemma}\label{l:RestKake}
Suppose that \eqref{e:rescale} holds, the family of tubes $\mathbb{T}$ is as above and
$$
F:=(\sum_{T \in \mathbb{T}} \1_T)^{\frac1{n-1}}.
$$
Then
\begin{equation}\label{e:XrayKake}
\big\| XF \big\|_{L^n_\omega L^\infty_v} \lesssim_\varepsilon R^\varepsilon \big( R^{-\frac{n-1}{2} } \# \mathbb{T}\big)^\frac1n,
\end{equation}
where the implicit constant is independent of $\mathbb{T}$.
\end{lemma}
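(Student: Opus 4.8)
To prove Lemma~\ref{l:RestKake}, the plan is to realise the Kakeya sum $\sum_{T\in\mathbb{T}}\1_T$ as a pointwise lower bound for the signed average of $|\widehat{g_\epsilon d\sigma}(R\,\cdot)|^{2/(n-1)}$, where $g_\epsilon$ is a random sum of wave packets adapted to the tubes of $\mathbb{T}$, and then to quote the hypothesis \eqref{e:rescale} directly. This is the classical randomised link between the extension operator and Kakeya, adapted to the $L^n_\omega L^\infty_v$ setting.

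\textbf{Setting up the wave packets.} After a harmless dilation I may assume the tubes of $\mathbb{T}$ lie in $B_1$. For each $T\in\mathbb{T}$, with direction $\omega_T$, let $\tau_T\subset\mathbb{S}^{n-1}$ be the cap of radius $\sim R^{-1/2}$ centred at $\omega_T$, so that $\sigma(\tau_T)\sim R^{-(n-1)/2}$; since the $\omega_T$ are $R^{-1/2}$-separated, the caps $\{\tau_T\}$ have bounded overlap. Estimating the oscillation of the phase $\xi\mapsto y\cdot\xi$ for $\xi\in\tau_T$ directly (this needs no stationary phase) shows that $|\widehat{\1_{\tau_T}d\sigma}(y)|\gtrsim R^{-(n-1)/2}$ on the dual box of $\tau_T$, a tube of dimensions $\sim R^{1/2}\times\cdots\times R^{1/2}\times R$ with long axis $\omega_T$. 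Dilating by $R^{-1}$ and inserting a suitable modulation $m_T(\xi)=e^{ia_T\cdot\xi}\1_{\tau_T}(\xi)$ — equivalently, a translation in physical space — re-centres this dual box on $T$, producing for each $T$ a function $m_T$ with $|m_T|=\1_{\tau_T}$ and
\begin{equation*}
|\widehat{m_T d\sigma}(Rx)|\gtrsim R^{-(n-1)/2}\,\1_T(x)\qquad\text{for all }x.
\end{equation*}

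\textbf{Randomisation.} For $\epsilon=(\epsilon_T)\in\{-1,1\}^{\#\mathbb{T}}$ set $g_\epsilon=\sum_{T\in\mathbb{T}}\epsilon_T m_T$. Khintchine's inequality with exponent $\tfrac{2}{n-1}$ (valid for every positive exponent, so there is no obstruction when $n>3$ and $\tfrac{2}{n-1}<1$), together with the pointwise bound above, gives
\begin{equation*}
\mathbb{E}_\epsilon\,\bigl|\widehat{g_\epsilon d\sigma}(Rx)\bigr|^{\frac{2}{n-1}}\gtrsim\Bigl(\sum_{T\in\mathbb{T}}\bigl|\widehat{m_T d\sigma}(Rx)\bigr|^2\Bigr)^{\frac{1}{n-1}}\gtrsim R^{-1}\Bigl(\sum_{T\in\mathbb{T}}\1_T(x)\Bigr)^{\frac{1}{n-1}}=R^{-1}F(x).
\end{equation*}
On the other hand, the bounded overlap of $\{\tau_T\}$ yields $\|g_\epsilon\|_{L^{2n/(n-1)}(\mathbb{S}^{n-1})}^{2n/(n-1)}\lesssim\sum_T\sigma(\tau_T)\sim R^{-(n-1)/2}\#\mathbb{T}$, uniformly in $\epsilon$, so that $\|g_\epsilon\|_{L^{2n/(n-1)}(\mathbb{S}^{n-1})}^{2/(n-1)}\lesssim(R^{-(n-1)/2}\#\mathbb{T})^{1/n}$.

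\textbf{Assembling the estimate.} Since $F$ is supported in $B_1$, the display above reads $\1_{B_1}F\lesssim R\,\mathbb{E}_\epsilon\bigl[\1_{B_1}|\widehat{g_\epsilon d\sigma}(R\cdot)|^{2/(n-1)}\bigr]$. Applying the positive linear operator $X$, commuting it past $\mathbb{E}_\epsilon$, and moving $\mathbb{E}_\epsilon$ outside the norm $\|\cdot\|_{L^n_\omega L^\infty_v}$ (a genuine norm since $n\ge2$) by Minkowski's inequality, then invoking \eqref{e:rescale} for each fixed $\epsilon$ together with the uniform bound on $\|g_\epsilon\|$, I obtain
\begin{equation*}
\|XF\|_{L^n_\omega L^\infty_v}\lesssim R\,\mathbb{E}_\epsilon\bigl\|X\bigl(\1_{B_1}|\widehat{g_\epsilon d\sigma}(R\cdot)|^{\frac{2}{n-1}}\bigr)\bigr\|_{L^n_\omega L^\infty_v}\lesssim_\varepsilon R\cdot R^\varepsilon R^{-1}\bigl(R^{-(n-1)/2}\#\mathbb{T}\bigr)^{1/n},
\end{equation*}
which is precisely \eqref{e:XrayKake}. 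The only genuinely technical point is the first step: pinning down the dual-box lower bound for $\widehat{\1_{\tau_T}d\sigma}$ and checking that, after the $R^{-1}$ dilation and modulation, these boxes agree with the tubes of $\mathbb{T}$ up to admissible absolute constants. Everything after that is formal bookkeeping.
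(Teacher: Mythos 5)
Your proposal is correct and follows essentially the same route as the paper: random sums of modulated cap functions $\phi_T=\1_{\mathcal{C}(\omega(T))}e_T$ with the pointwise lower bound $|\widehat{\phi_Td\sigma}(Rx)|\gtrsim R^{-\frac{n-1}{2}}\1_T(x)$, Khintchine's inequality at the exponent $\tfrac{2}{n-1}$, and then linearity of $X$ plus Minkowski's inequality to pass the expectation outside the $L^n_\omega L^\infty_v$ norm before invoking \eqref{e:rescale}. The only cosmetic difference is that the paper shrinks the caps to make them disjoint while you allow bounded overlap, which changes nothing.
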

\begin{proof}
For a tube $T\in\mathbb{T}$, let $\omega(T) \in \mathbb{S}^{n-1}$ denote its direction, and let $\mathcal{C}(\omega) = B(\omega,cR^{-\frac12})\cap \mathbb{S}^{n-1}$ for some sufficiently small constant $c$ depending only on the dimension. Elementary considerations reveal that for suitably chosen modulations $e_T$, the functions $\phi_T:=1_{\mathcal{C}(\omega(T))}e_T$ satisfy
\begin{equation}\label{e:pointwise}
|\widehat{\phi_T d\sigma}(x)| \gtrsim R^{-\frac{n-1}2} \1_{T}(R^{-1}x),
\end{equation}
uniformly in $T\in\mathbb{T}$. Note also that the constant $c$ may be chosen small enough so that the caps $\{\mathcal{C}(\omega(T) \}_{T\in\mathbb{T}}$ are disjoint, since the directions of tubes are $R^{-\frac12}$-separated. Next we let $\nu=\{ \nu_T\}_{T\in\mathbb{T}}$ be a sequence of independent random variables taking values in $\{-1,1\}$ with equal probability, and
$$
g_{\nu}= \sum_{T\in\mathbb{T}} \nu_T \phi_T.
$$
Taking expectations,  using Khintchine's inequality and \eqref{e:pointwise}, we have
\begin{equation}\label{e:point2}
\mathbf{E}(|\widehat{g_\nu d\sigma}(Rx)|^\frac{2}{n-1})
\sim
\big( \sum_{T\in\mathbb{T}}  |\widehat{\phi_T d\sigma}(Rx)|^2  \big)^{\frac1{n-1}} \gtrsim R^{-1} \big( \sum_{T\in\mathbb{T}} \1_T (x)\big)^\frac1{n-1} = R^{-1}F(x).
\end{equation}
By the linearity of $X$, the inequality \eqref{e:point2}, Minkowski's inequality and \eqref{e:rescale},
we conclude that
\begin{align*}
\big\| XF \big\|_{L^{n}_\omega L^\infty_v}
&\lesssim
R\:\big\| \mathbf{E}(X(|\widehat{g_\nu d\sigma}(R\cdot)|^\frac2{n-1}))\big\|_{L^{n}_\omega L^\infty_v} \\
&\lesssim  R \:\mathbf{E}\bigl(\big\| X(\1_{B_1}|\widehat{g_\nu d\sigma}(R\cdot)|^\frac2{n-1})\big\|_{L^{n}_\omega L^\infty_v}\bigr)\\
&\lesssim_\varepsilon R^\varepsilon \:\mathbf{E}(\|g_\nu\|_{\frac{2n}{n-1}}^\frac2{n-1})
\sim
R^\varepsilon  \big( R^{-\frac{n-1}{2}}\#\mathbb{T} \big)^\frac1n,
\end{align*}
as required.
\end{proof}
In light of Lemma \ref{l:RestKake}  we have only to show the implication from \eqref{e:XrayKake} to \eqref{e:kakeya}.
Using \eqref{e:XrayKake} and the fact that $\mathcal{K}_{R^{-1/2}}F(\omega) \lesssim \sup_v XF(\omega,v)$, we have
\begin{equation}\label{e:KakeKake}
\big\| \mathcal{K}_{R^{-1/2}}F \big\|_{L^n_\omega L^\infty_v} \lesssim_\varepsilon R^\varepsilon \big( R^{-\frac{n-1}{2} } \# \mathbb{T}\big)^\frac1n.
\end{equation}
Since $\mathcal{K}_{R^{-1/2}}F$ is essentially constant at scale $R^{-\frac12}$, and $|\mathcal{C}(\omega(T))| \sim R^{-\frac{n-1}{2}}$,
\begin{align*}
\big\| \sum_{T \in \mathbb{T}} \1_T \big\|_{L^{\frac{n}{n-1}}(\mathbb{R}^n)}^\frac{n}{n-1}
&=
\int_{\mathbb{R}^n} \sum_{T\in\mathbb{T}}\1_T(x)F(x)\, dx \\
&\lesssim
R^{-\frac{n-1}{2}}  \sum_{T\in\mathbb{T}} \mathcal{K}_{R^{-1/2}}F(\omega(T)) \\
&\sim
\sum_{T\in\mathbb{T}} \int_{\mathcal{C}(\omega(T))} \mathcal{K}_{R^{-1/2}}F(\omega)\, d\sigma(\omega).
\end{align*}
Here the caps $\mathcal{C}(\omega)$ are as in the proof of Lemma \ref{l:RestKake}. Using H\"{o}lder's inequality, \eqref{e:KakeKake} and the fact that $$|\bigcup_{T\in\mathbb{T}} \mathcal{C}(\omega(T)) | \sim R^{-\frac{n-1}{2}}\# \mathbb{T},$$
we conclude that
\begin{align*}
\big\| \sum_{T \in \mathbb{T}} \1_T \big\|_{L^{\frac{n}{n-1}}(\mathbb{R}^n)}^\frac{n}{n-1}
&\lesssim
\big\| \mathcal{K}_{R^{-1/2}}F \big\|_{L^n(\mathbb{S}^{n-1})} |\bigcup_{T\in\mathbb{T}} \mathcal{C}(\omega(T)) |^\frac{n-1}{n} \\
&\lesssim_\varepsilon R^\varepsilon \big(R^{-\frac{n-1}{2}}\# \mathbb{T}\big)^\frac1n \big(R^{-\frac{n-1}{2}}\# \mathbb{T}\big)^\frac{n-1}{n},
\end{align*}
as claimed.

\section{Proof of Theorem \ref{planchX}}
%
By Fubini's theorem,
\begin{equation}\label{fub}
\int g(\xi)d\sigma(\xi)=\int_{-1}^1\int g(\xi)d\sigma_{\omega,t}(\xi)dt,
\end{equation}
Consequently,
$$
\widehat{gd\sigma}(x)=\int_{-1}^1\widehat{gd\sigma}_{\omega,t}(x)dt.
$$
Defining the projection $\pi_\omega$ by $\pi_\omega(\xi) = \xi -(\xi\cdot \omega) \omega$, we have
\begin{eqnarray*}
\begin{aligned}
\widehat{gd\sigma}_{\omega,t}(x)
&=\int g(\xi)e^{ix\cdot((\omega\cdot\xi)\omega+\pi_\omega(\xi))}d\sigma_{\omega,t}(\xi)
=e^{itx\cdot\omega}\widehat{gd\sigma}_{\omega,t}(\pi_\omega(x)).
\end{aligned}
\end{eqnarray*}
Hence
$$
\widehat{gd\sigma}(x)=\int_{-1}^1e^{itx\cdot\omega}\widehat{gd\sigma}_{\omega,t}(\pi_\omega(x))dt,
$$
and so,
\begin{eqnarray}\label{e:Xform}
\begin{aligned}
X(|\widehat{gd\sigma}|^2)(\omega,v)
&=\int_{\mathbb{R}}\left|\int_{-1}^1e^{ist}\widehat{gd\sigma}_{\omega,t}(v)dt\right|^2ds
=2\pi\int_{-1}^1|\widehat{gd\sigma}_{\omega,t}(v)|^2dt,
\end{aligned}
\end{eqnarray}
establishing \eqref{idX1}.
To establish \eqref{idX2} from \eqref{idX1} we first apply the elementary bound $$|\widehat{gd\sigma}_{\omega,t}(v)|\leq\int |g|d\sigma_{\omega,t}=A_t(|g|)(\omega),$$
which holds with equality if $g$ is single-signed, to obtain
$$
\sup_{v\in\langle\omega\rangle^\perp}X(|\widehat{gd\sigma}|^2)(\omega,v)\leq 2\pi S(|g|)(\omega)^2.
$$
Since $A_t(|g|)(\omega)=\widehat{|g|d\sigma}_{\omega,t}(0)$, it follows that
$$2\pi S(|g|)(\omega)^2=X_0(|\widehat{|g|d\sigma}|^2)(\omega),$$
by reversing the argument in \eqref{e:Xform}.


\section{Proof of Theorem \ref{t:n=3,r=infty}}\label{Sec6}
Although Theorem \ref{t:n=3,r=infty} makes reference to three dimensions only, much of our argument continues to function in all dimensions $n\geq 2$. 
In particular, we shall reduce Theorem \ref{t:n=3,r=infty} to certain weighted inequalities for the extension operator, which are potentially of independent value, and are naturally presented in any dimension. Consequently we shall work in general $n\geq 2$ dimensions much of the time.
Of course, Theorem \ref{t:main} establishes that no global estimates of the form \eqref{e:r=infty} are available when $n=2$, since $\mathcal{R}=X$ in that case. For $n>3$, our argument does yield estimates of the form \eqref{e:r=infty}, although it appears to fall short of providing a full characterisation of the admissible exponents.

As we shall see next, Theorem \ref{planchX} allows us to reduce the estimate \eqref{e:r=infty} to a weighted estimate for the extension operator. This is the content of our next lemma, which is phrased in terms of the classical Lorentz spaces.
\begin{lemma}\label{l:X-reduction}
For all $q\in[1,\infty)$,
\begin{equation}\label{e:0805-1}
\Big\| \sup_{v\in \langle \omega \rangle^\perp} X( |\widehat{gd\sigma}|^2) \Big\|_{L^q_\omega(\mathbb{S}^{n-1})} \lesssim \big\| \widehat{|g|d\sigma}(\cdot) |\cdot|^{\frac{1}{2} - \frac{n}{2q}} \big\|_{L^{2q,2}(\mathbb{R}^n)}^2.
\end{equation}
Moreover, equality holds in \eqref{e:0805-1} when $g$ is single-signed and $q=1$.
\end{lemma}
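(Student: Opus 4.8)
The starting point is the identity \eqref{idX2} from Theorem \ref{planchX}: for single-signed $g$,
$$
\sup_{v\in\langle\omega\rangle^\perp}X(|\widehat{gd\sigma}|^2)(\omega,v)=X_0(|\widehat{|g|d\sigma}|^2)(\omega)=\int_{\mathbb R}|\widehat{|g|d\sigma}(s\omega)|^2\,ds,
$$
and for general $g$ the left-hand side is dominated by the same quantity with $g$ replaced by $|g|$. So it suffices to prove
$$
\big\|X_0(|\widehat{hd\sigma}|^2)\big\|_{L^q_\omega(\mathbb S^{n-1})}\lesssim \big\|\widehat{hd\sigma}(\cdot)\,|\cdot|^{\frac12-\frac n{2q}}\big\|_{L^{2q,2}(\mathbb R^n)}^2
$$
for $h=|g|\ge0$, and to check that this last inequality is an equality when $q=1$.

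First I would rewrite the left-hand side in polar coordinates. Writing $F=|\widehat{hd\sigma}|^2\ge0$, we have $X_0F(\omega)=\int_0^\infty (F(s\omega)+F(-s\omega))\,ds$, so
$$
\|X_0F\|_{L^q_\omega}^q=\int_{\mathbb S^{n-1}}\Big(\int_0^\infty (F(s\omega)+F(-s\omega))\,ds\Big)^q d\sigma(\omega).
$$
The key move is to recognise $\int_0^\infty F(s\omega)\,ds$ (combined with its antipodal partner) as, up to the Jacobian $s^{n-1}$, an expression one can compare to a weighted $L^1$ radial integral of $F(x)|x|^{-(n-1)}$, and then pass from the $L^q_\omega$ norm of this radial average to a mixed-norm statement about $F$ itself. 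Concretely, by Minkowski's integral inequality in the form $\|\int_0^\infty F(s\,\cdot)\,ds\|_{L^q_\omega}\le \int_0^\infty \|F(s\,\cdot)\|_{L^q_\omega}\,ds$ and the substitution that turns $\int_{\mathbb S^{n-1}}F(s\omega)^q\,d\sigma(\omega)$ into $s^{-(n-1)}\int_{|x|=s}F^q$, one is led to a bound by $\int_0^\infty \big(\int_{|x|=s}F(x)^q\,d\mathcal H^{n-1}(x)\big)^{1/q}s^{-(n-1)/q}\,ds$. Undoing the exponents, $F^q$ at radius $s$ contributes like $\big(F\cdot|x|^{1/q-n/q}\big)$ integrated against $d\mathcal H^{n-1}$, and the outer $\int_0^\infty\cdots ds$ is exactly the Lorentz-space pairing $\int_0^\infty (\text{layer } L^q \text{ norm})\,ds$ that defines $\|\,\cdot\,\|_{L^{q,1}}$. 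Since $F=|\widehat{hd\sigma}|^2$, we have $F\cdot|x|^{1-n/q}=\big(|\widehat{hd\sigma}|\cdot|x|^{1/2-n/(2q)}\big)^2$, and the outer $L^{q,1}$ of a square is comparable to the square of an $L^{2q,2}$ norm — that is precisely the identity $\|\phi\|_{L^{q,1}}=\|\phi^{1/2}\|_{L^{2q,2}}^2$ for the Lorentz functionals. Assembling these gives the claimed inequality, with the $(2q,2)$ exponents appearing naturally from squaring a first-index-doubled, second-index-preserved Lorentz norm.

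The equality case $q=1$ should drop out for free: when $q=1$, Minkowski's inequality above is an equality (it is just Fubini/Tonelli for the nonnegative function $F$ on $\mathbb S^{n-1}\times(0,\infty)$), polar coordinates introduce no loss, and $L^{1,1}=L^1$ with $\|\phi\|_{L^{1,1}}=\|\phi\|_{L^1}$, while $\|\phi^{1/2}\|_{L^{2,2}}^2=\|\phi^{1/2}\|_{L^2}^2=\|\phi\|_{L^1}$. Combined with the fact that \eqref{idX2} is an equality for single-signed $g$, this forces equality throughout the chain when $q=1$, so
$$
\Big\|\sup_{v}X(|\widehat{gd\sigma}|^2)\Big\|_{L^1_\omega}=\big\|\widehat{|g|d\sigma}(\cdot)\,|\cdot|^{\frac12-\frac n2}\big\|_{L^{2,2}(\mathbb R^n)}^2=\int_{\mathbb R^n}|\widehat{|g|d\sigma}(x)|^2|x|^{1-n}\,dx,
$$
which is just $X_0$ of $|\widehat{|g|d\sigma}|^2$ integrated over $\mathbb S^{n-1}$, consistent with the definition of $X_0^*$ in \eqref{X0}--\eqref{exact}.

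I expect the main obstacle to be bookkeeping the Lorentz norms correctly rather than any deep inequality: one must verify that the passage from $\|\int_0^\infty F(s\cdot)\,ds\|_{L^q_\omega}$ to a layer-cake integral of $L^q$ norms on spheres really does produce the $L^{q,1}$ (equivalently $L^{2q,2}$ after de-squaring) functional and not some neighbouring Lorentz or Lebesgue space, and that the direction of Minkowski's inequality is the one that makes the bound true. The cleanest way to be sure is to first prove the $q=1$ identity directly by Fubini (no Lorentz spaces needed there), then obtain general $q$ by interpolation or by the same Minkowski argument, checking that the well-known identity $\|\phi\|_{L^{q,1}}^{1/q}$-type relations collapse everything to the stated form. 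Everything else — polar coordinates, the elementary pointwise bound $|\widehat{hd\sigma}_{\omega,t}(v)|\le A_t(|h|)(\omega)$ already recorded in the proof of Theorem \ref{planchX}, and nonnegativity of $F$ — is routine.
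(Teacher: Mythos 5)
Your reduction via \eqref{idX2} and your treatment of $q=1$ (where everything is Fubini and $L^{1,1}=L^1$) are fine, but the step you yourself flag as ``bookkeeping'' is where the argument genuinely breaks for $q>1$. After Minkowski you need, writing $F=|\widehat{|g|d\sigma}|^2$ and $\Psi=F\,|x|^{1-\frac nq}$,
\begin{equation*}
\int_0^\infty\Big(\int_{\mathbb{S}^{n-1}}\Psi(s\omega)^q\,d\sigma(\omega)\Big)^{\frac1q}s^{\frac nq-1}\,ds\;\lesssim\;\|\Psi\|_{L^{q,1}(\mathbb{R}^n)},
\end{equation*}
i.e.\ that the mixed radial--angular $L^1_{\mathrm{rad}}L^q_{\mathrm{ang}}$ functional is dominated by the Lorentz $L^{q,1}$ norm. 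This is not ``exactly the pairing that defines $L^{q,1}$'': the $L^{q,1}$ norm is built from the global distribution function (decreasing rearrangement), not from spherical slices, and the two agree only for radially decreasing functions. In fact the inequality is false for every $q>1$: take $\Psi=\1_E$ with $E=\{s\omega:\,1\le s\le R,\ \omega\in C_s\}$, where $C_s\subset\mathbb{S}^{n-1}$ is a cap of measure $s^{-n}$. Then the left-hand side equals $\int_1^R s^{-\frac nq}s^{\frac nq-1}\,ds=\log R$, while $|E|=\int_1^R s^{-n}s^{n-1}\,ds=\log R$, so $\|\1_E\|_{L^{q,1}}\sim(\log R)^{1/q}$, and the ratio blows up as $R\to\infty$. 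So the chain Minkowski $\Rightarrow$ ``slice integral $=L^{q,1}$'' $\Rightarrow$ $\|\cdot\|_{L^{2q,2}}^2$ cannot be repaired as written; and the fallback ``interpolate from $q=1$'' has no second endpoint (the weight $|x|^{\frac12-\frac n{2q}}$ changes with $q$, and $q=\infty$ is excluded and degenerate), so it is not a substitute.

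The paper avoids Minkowski entirely and instead uses a multiply-and-divide duality argument: by polar coordinates,
\begin{equation*}
\big\|X_0(F)\big\|_{L^q(\mathbb{S}^{n-1})}^q=2\int_{\mathbb{R}^n}F(x)\,X_0(F)(x/|x|)^{q-1}|x|^{-(n-1)}\,dx,
\end{equation*}
then H\"older for Lorentz spaces with exponents $L^{q,1}\times L^{q',\infty}$, together with the exact computations $\big\|h(x/|x|)\,|x|^{-\frac n{q'}}\big\|_{L^{q',\infty}(\mathbb{R}^n)}\sim\|h\|_{L^{q'}(\mathbb{S}^{n-1})}$ (applied to $h=X_0(F)^{q-1}$, so this factor is $\|X_0(F)\|_{L^q}^{q-1}$) and $\||\phi|^2\|_{L^{q,1}}=\|\phi\|_{L^{2q,2}}^2$, and finally divides by $\|X_0(F)\|_{L^q}^{q-1}$. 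That is the mechanism producing the $L^{2q,2}$ norm with weight $|x|^{\frac12-\frac n{2q}}$; if you want to salvage your write-up, replace your Minkowski/slice step by this Lorentz--H\"older argument (your $q=1$ equality discussion can stay as is).
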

\begin{proof}
By \eqref{idX2},
\begin{equation}\label{e:0827-2}
\Bigl\|\sup_{v\in \langle \omega \rangle^\perp} X(|\widehat{gd\sigma}|^2)\Bigr\|_{L^q(\mathbb{S}^{n-1})} \le
\|X_0(|\widehat{|g|d\sigma}|^2)\|_{L^q(\mathbb{S}^{n-1})},
\end{equation}
with equality when $g$ is single-signed. Furthermore, \eqref{e:0827-2} allows us to assume $g\ge0$ for the remainder of our argument. Estimating further, we have
\begin{eqnarray*}
\begin{aligned}
\big\| X_0( |\widehat{g d\sigma}|^2)  \big\|_{L^{q}(\mathbb{S}^{n-1})}^{q}
= &
2\int_{\mathbb{R}^n} |\widehat{g d\sigma}(x)|^2 X_0( |\widehat{g d\sigma}|^2)(x/|x|)^{q-1}|x|^{-(n-1)}\, dx \\
\lesssim& \big\| |\widehat{ g d\sigma}|^2 |\cdot|^{-(n-1) + \frac{n}{q'}}  \big\|_{L^{q,1}(\mathbb{R}^n)} \big\| X_0( |\widehat{g d\sigma}|^2)(x/|x|)^{q-1}|x|^{-\frac{n}{q'}}  \big\|_{L_x^{q',\infty}(\mathbb{R}^n)} \\
=& \big\| \widehat{ g d\sigma} |\cdot|^{-\frac{n-1}{2} + \frac{n}{2q'}}  \big\|_{L^{2q,2}(\mathbb{R}^n)}^2 \big\| X_0( |\widehat{ g d\sigma}|^2) \big\|_{L^{q}(\mathbb{S}^{n-1})}^{q-1}.
\end{aligned}
\end{eqnarray*}
Here we have used the change of variables $(r,\omega)\in\mathbb{R}_{+} \times \mathbb{S}^{n-1} \mapsto x \in \mathbb{R}^n$, H\"{o}lder's inequality on Lorentz spaces and the fact that $|\cdot|^{-\frac{n}{q'}} \in L^{q',\infty}(\mathbb{R}^n)$.
Hence
\[
\big\| X_0( |\widehat{g d\sigma}|^2) \big\|_{L^{q}(\mathbb{S}^{n-1})} \le C \big\| \widehat{g d\sigma} |\cdot|^{-\frac{n-1}{2} + \frac{n}{2q'}}  \big\|_{L^{2q,2}(\mathbb{R}^n)}^2,
\]
which, together with \eqref{e:0827-2}, concludes the proof of \eqref{e:0805-1}.
Finally we note that every inequality in the above may be replaced by an equality when $q=1$ and $g$ is single-signed.
\end{proof}

As we shall clarify below, Lemma \ref{l:X-reduction} allows us to reduce Theorem \ref{t:n=3,r=infty} to the following proposition.
\begin{proposition}\label{sufprop} Let $X= (\frac23,\frac13), Y=(\frac34,\frac12), Z = (\frac34,1)$ and $\gamma=\gamma(p,q) = \frac1{q} -\frac1{p'}$. Then
\begin{equation}\label{e:0307}
\big\| \widehat{gd\sigma} \langle \cdot\rangle^{-\gamma} \big\|_{L^{2q,2}(\mathbb{R}^3)} \lesssim \|g\|_{L^{p}(\mathbb{S}^2)},
\end{equation}
for all $(\frac1p,\frac1q) \in (X,Y) \cup (Y,Z)$. Here we use $(A,B)$ to denote the open line segment between two points $A,B \in [0,1]^2$, and $\langle x\rangle=(1+|x|^2)^{\frac12}$.
\end{proposition}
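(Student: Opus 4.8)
The plan is to regard Proposition~\ref{sufprop} as the \emph{Knapp-critical} power-weighted extension estimate for $\mathbb{S}^2$ and to deduce it by real interpolation from three endpoint estimates, the substantive ones being the weighted extension inequalities of Bloom--Sampson \cite{BS92} together with the endpoint case that I would supply in the appendix. Writing $Eg=\widehat{gd\sigma}$, a Knapp example on a $\delta$-cap shows that $\gamma(p,q)=\tfrac1p+\tfrac1q-1$ is exactly the largest weight exponent for which $\|Eg\,\langle\cdot\rangle^{-\gamma}\|_{L^{2q}(\mathbb{R}^3)}\lesssim\|g\|_{L^p}$ can hold, so \eqref{e:0307} is sharp; crucially, $\gamma(p,q)$ is \emph{affine} in $(\tfrac1p,\tfrac1q)$, which is what forces interpolation along the two segments to produce precisely this exponent. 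In keeping with the logarithmic phenomena behind Theorems~\ref{t:main} and \ref{t:wMizTak}, one expects a genuine loss exactly at the vertices $X,Y,Z$ --- where only a single Lebesgue exponent is available --- which is why the statement is confined to the \emph{open} segments.

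The three endpoint inputs I would use are the weak-type bounds
\begin{align*}
\|Eg\|_{L^{6,\infty}(\mathbb{R}^3)}&\lesssim\|g\|_{L^{3/2}(\mathbb{S}^2)}\quad(\text{vertex }X,\ \gamma=0),\\
\big\|\langle\cdot\rangle^{-1/4}Eg\big\|_{L^{4,\infty}(\mathbb{R}^3)}&\lesssim\|g\|_{L^{4/3}(\mathbb{S}^2)}\quad(\text{vertex }Y,\ \gamma=\tfrac14),\\
\big\|\langle\cdot\rangle^{-3/4}Eg\big\|_{L^{2}(\mathbb{R}^3)}&\lesssim\|g\|_{L^{4/3}(\mathbb{S}^2)}\quad(\text{vertex }Z,\ \gamma=\tfrac34).
\end{align*}
The first comes from real interpolation between the (weak-type form of the) Stein--Tomas estimate $L^2(\mathbb{S}^2)\to L^{4,\infty}(\mathbb{R}^3)$ and the trivial $L^1\to L^\infty$; the second and third are the Bloom--Sampson power-weighted extension estimates \cite{BS92}. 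The third, being the $L^2$ endpoint of that family, is the case left open in \cite{BS92}, and it is the only hard input: expanding $\|\langle\cdot\rangle^{-3/4}Eg\|_{L^2}^2=\langle gd\sigma,\;K\ast(gd\sigma)\rangle$ with $K=\widehat{\langle\cdot\rangle^{-3/2}}\sim|\cdot|^{-3/2}$ near the origin reduces it to the bilinear bound $\int_{\mathbb{S}^2}\!\int_{\mathbb{S}^2}|g(\xi)||g(\eta)|\,|\xi-\eta|^{-3/2}\,d\sigma(\xi)d\sigma(\eta)\lesssim\|g\|_{L^{4/3}}^2$, whose kernel is too singular for the Hardy--Littlewood--Sobolev inequality on $\mathbb{S}^2$ with these exponents; the curvature of the sphere must be used, e.g.\ through a $TT^*$/Stein--Tomas argument adapted to the weight or a direct oscillatory-integral estimate. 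This is the main obstacle.

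Granting the three inputs, Proposition~\ref{sufprop} follows by interpolation. Given $(\tfrac1p,\tfrac1q)$ on the open segment $(X,Y)$, write it as $(1-\theta)X+\theta Y$ with $\theta\in(0,1)$, so that $\gamma(p,q)=\theta/4$. Applying the Marcinkiewicz interpolation theorem in a form that permits the underlying measure to vary (so as to absorb the weights as $\langle x\rangle^{-2q\gamma}\,dx$), between the two weak-type endpoints at $X$ and $Y$, produces the \emph{strong} Lorentz bound $\|\langle\cdot\rangle^{-\gamma(p,q)}Eg\|_{L^{2q,2}(\mathbb{R}^3)}\lesssim\|g\|_{L^p}$: the Lorentz second index can be taken to be $2$ precisely because the two endpoint Lebesgue exponents ($6$ and $4$) are distinct, and the interpolated weight exponent coincides with $\gamma(p,q)$ by the affine dependence noted above. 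The segment $(Y,Z)$ is handled identically, now interpolating the endpoints at $Y$ and $Z$ (Lebesgue exponents $4$ and $2$; on the open segment $2q>2$ strictly, so the $L^{2q,2}$ refinement is genuine). The vertices are necessarily excluded, since real interpolation yields the Lorentz gain only strictly between the endpoints.

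One bookkeeping point deserves care: the sphere exponent, the space exponent, and the weight all vary along a segment simultaneously. This is cleanest if one phrases the three endpoint statements as weak-type bounds for the single sublinear map $g\mapsto Eg$ into the weighted Lorentz spaces $L^{r_i,\infty}(\langle x\rangle^{-r_i\gamma_i}\,dx)$ and applies the change-of-measure Marcinkiewicz theorem to that map. Equivalently, one may first split $\mathbb{R}^3$ into dyadic annuli $|x|\sim 2^j$, on which $\langle\cdot\rangle^{-\gamma}$ is essentially constant, thereby reducing \eqref{e:0307} to fixed-scale local restriction estimates of Bloom--Sampson type on each annulus together with a sum in $j$; at the critical exponent $\gamma=\gamma(p,q)$ the naive sum only diverges logarithmically, and the Lorentz exponent $2$ is exactly what is recovered from the $\ell^2$-subadditivity of $\|\cdot\|_{L^{2q,2}}$ over the disjoint annuli (valid since $2\le 2q$).
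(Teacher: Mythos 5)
Your treatment of the segment $(Y,Z)$ has a fatal flaw: the endpoint you posit at $Z$, namely $\big\|\langle\cdot\rangle^{-3/4}\widehat{gd\sigma}\big\|_{L^2(\mathbb{R}^3)}\lesssim\|g\|_{L^{4/3}(\mathbb{S}^2)}$, is false. Take $g$ to be the indicator of a $\delta$-cap: then $|\widehat{gd\sigma}|\gtrsim\delta^2$ on the ball $|x|\le c\delta^{-1}$, so the left-hand side is $\gtrsim\delta^2\big(\int_{|x|\le c/\delta}|x|^{-3/2}dx\big)^{1/2}\sim\delta^{5/4}$, while $\|g\|_{4/3}\sim\delta^{3/2}$, and the ratio blows up as $\delta\to0$. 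In the appendix coordinates this point is $(\tfrac1p,\tfrac1{\tilde q})=(\tfrac34,\tfrac12)\in(A,E)$, where Theorem \ref{t:BS}(3) gives the threshold $\gamma\ge\tfrac3{\tilde q}-\tfrac2{p'}=1$, not $\tfrac34$; for the same reason your bilinear reduction $\iint|g(\xi)||g(\eta)|\,|\xi-\eta|^{-3/2}d\sigma d\sigma\lesssim\|g\|_{4/3}^2$ fails on the cap (curvature cannot help a positive-kernel bound), so it is not merely ``the hard input'' --- it is unprovable, and the interpolation between $Y$ and $Z$ collapses. Note also that your sharpness heuristic is governed by the Knapp slab only in the region $2q\le p'$; above the line $2q=p'$ the concentration example above imposes the larger exponent $\tfrac3{2q}-\tfrac2{p'}$, which is why the paper's Theorem \ref{p:w-rest} is confined to the triangle $ABD$ (after $q\mapsto 2q$ this is exactly the mapped segment $(X,Y)$), and why for the segment $(Y,Z)$ the deduction of Theorem \ref{t:n=3,r=infty} only ever requires the weight power $\tfrac3{2q}-\tfrac12$ rather than $\tfrac1q-\tfrac1{p'}$.

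There are two further gaps even on $(X,Y)$, and they mark the real difference from the paper's argument. First, your endpoint at $Y$ (critical $\gamma=\tfrac14$, $L^{4/3}\to L^{4,\infty}$) is not a Bloom--Sampson estimate: by Theorem \ref{t:BS}(4), \cite{BS92} reach only $\gamma$ strictly above the critical value on $ABD\setminus\{A\}$, and this critical point on $(A,D)$ is precisely the endpoint the appendix is written to supply --- and even there only in restricted weak type, $L^{p,1}\to L^{q,\infty}$, proved by decomposing into dyadic annuli, using the radial-weight Mizohata--Takeuchi estimate of \cite{BRV,CS} on each annulus to get the $L^2$ bound, and summing with Bourgain's lemma in the form of Lee--Seo \cite{LeeSeo}. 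Second, your interpolation runs along $(X,Y)$, where the weight exponent varies, so you need a ``change-of-measure Marcinkiewicz'' theorem between weak-type endpoints with different power weights; no such off-the-shelf theorem is available, and your fallback via annuli does not close the gap either: the estimate \eqref{e:0307} is global, there are infinitely many annuli, and at the critical exponent their contributions are only uniformly bounded, so no $\ell^2$-subadditivity can sum them --- the Lee--Seo step is exactly the replacement for this, and it is also the reason only restricted weak type is available on the critical boundary. The paper avoids your change-of-measure issue entirely by first obtaining these restricted weak-type bounds and then real-interpolating along lines $\ell(\gamma)$ of \emph{constant} $\gamma$, so that a single fixed operator $T_\gamma$ is interpolated between two unweighted Lorentz-space bounds, yielding $L^{p,2}\to L^{2q,2}$; Proposition \ref{sufprop} is then read off by replacing $q$ with $2q$, taking $r=2$ and using $L^p\subset L^{p,2}$. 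If you want to salvage your scheme, you should adopt that constant-$\gamma$ interpolation and the annuli-plus-Lee--Seo mechanism rather than interpolating along the segments $(X,Y)$, $(Y,Z)$ themselves.
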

Proposition \ref{sufprop} includes an endpoint case of some classical results of Bloom and Sampson \cite{BS}. We refer the reader to the appendix for further discussion and proofs of such statements in all dimensions.

It remains to deduce Theorem \ref{t:n=3,r=infty} from Proposition \ref{sufprop}.
We have only to establish \eqref{e:n=3r=infty} at the endpoint $(\frac12,0)$ and points in $(X,Y) \cup (Y,Z)$, since the remaining bounds follow from these by H\"{o}lder's inequality and interpolation.
We begin with the point $(\frac12,0)$, which follows quickly from Theorem \ref{planchX}, and indeed holds in all dimensions $n\geq 3$.
As is well-known, the distribution $d\sigma_{\omega,t}$ has total mass $\mathfrak{C}_n(1-t^2)^{\frac{n-3}{2}}$, where
$$
\mathfrak{C}_n = |\mathbb{S}^{n-3}| \int_0^1 \frac{t^{n-3}}{\sqrt{1-t^2}}\, dt.
$$
Hence by the Cauchy--Schwarz inequality,
\begin{equation}\label{margest}
|\widehat{gd\sigma}_{\omega,t}(v)|^2\leq\left(\int |g|d\sigma_{\omega,t}\right)^2\leq \mathfrak{C}_n (1-t^2)^{\frac{n-3}{2}}\int|g|^2d\sigma_{\omega,t}\leq \mathfrak{C}_n\int|g|^2d\sigma_{\omega,t},
\end{equation}
and so
$$
X(|\widehat{gd\sigma}|^2)(\omega,v)\leq 4\pi\mathfrak{C}_n\|g\|_2^2
$$
by \eqref{idX1}. We note that in the case $n=3$ we obtain the sharp inequality \eqref{infX} since $\mathfrak{C}_3 = \frac{\pi}{2}$.
Indeed, if we choose $g \equiv 1$ and $v=0 \in \langle \omega\rangle^\perp$, then we have from $\int_{\mathbb{S}^2}\, d\sigma(\xi) = 2\pi$ that
$$
\frac{X(|\widehat{d\sigma}|^2)(\omega,0)}{\|1\|_2^2} = 2\pi^2.
$$
It remains to deduce \eqref{e:n=3r=infty} at an arbitrary point $(\frac1p,\frac1q) \in (X,Y)\cup (Y,Z)$, beginning with the segment $(X,Y)$. By Lemma \ref{l:X-reduction} it suffices to establish
$$
\big\| \widehat{gd\sigma} |\cdot|^{-(\frac3{2q} - \frac12)}  \big\|_{L^{2q,2}(\mathbb{R}^3)} \lesssim \| g \|_{L^{p}}.
$$
Since the exponent $\gamma= \frac3{2q} - \frac12$ on $(X,Y)$, this is a consequence of Proposition \ref{sufprop} thanks to the elementary estimate
$$
\big\| \widehat{gd\sigma} |\cdot|^{-(\frac{3}{2q} - \frac12)}\1_{B(0,1)} \big\|_{L^{2q,2}(\mathbb{R}^3)} \leq \|\widehat{gd\sigma}\|_\infty\big\| |\cdot|^{-(\frac{3}{2q} - \frac12)}\1_{B(0,1)} \big\|_{L^{2q,2}(\mathbb{R}^3)}\lesssim \|g\|_{L^{p}(\mathbb{S}^2)}.$$
The reduction for the segment $(Y,Z)$ follows similarly.

We conclude this section by establishing the necessity of \eqref{e:n3rinfty-nec} in Theorem \ref{t:n=3,r=infty}. To this end, we employ the example \eqref{e:ex} with $m=1$. As before, we see from elementary geometric considerations that
\begin{align*}
\big\| X(\1_{S_1}) \big\|_{L^q_\theta L^\infty_v}
\gtrsim
    \max\{ \delta^{-1} ,
    \delta^{-2} \delta^{\frac{2}q}
    \}.
\end{align*}
In view of \eqref{e:StationaryPhase} and $\|g_1\|_p^2 \sim \delta^{\frac{4}p}$, \eqref{e:n=3r=infty} implies \eqref{e:n3rinfty-nec}, with the exception of the condition $(\frac1p,\frac1q) \neq (\frac34,\frac12)$. To see this, we need a more delicate lower bound on $ \| X(\1_{S_1}) \|_{L^2_\omega L^\infty_v}$ which takes into account contributions from transversal interactions at all scales. This reveals that
\begin{equation}\label{multiscalebound}
\| X(\1_{S_1}) \|_{L^2_\omega L^\infty_v} \gtrsim \log(\delta^{-1})^{\frac12} \delta^{-1},
\end{equation}
which of course forces $(\frac1p,\frac1q) \neq (\frac34,\frac12)$.

\section{X-ray estimates with applications to restriction theory}\label{Sect:appl}
Here we provide the proofs of Theorems \ref{t:wMizTak} and \ref{t:wStein}, and establish some analogous results in higher dimensions.
We begin with a simple geometrical observation, valid in all dimensions, and involving the operator $BA_t$, the bilinear version of $A_t$ given by \eqref{e:Bilinearize}. Explicitly, for nonnegative functions $g_1,g_2$ on $\mathbb{S}^{n-1}$, $t\in(-1,1)$ and $\omega \in \mathbb{S}^{n-1}$, we have
\begin{equation}\label{e:Leia}
BA_t(g_1,g_2)(\omega) = A_t(g_1(\cdot){\widetilde{g}_2(R_\omega\cdot)})(\omega) = \int_{\mathbb{S}^{n-1}} \delta(\omega\cdot \xi - t) g_1(\xi) {\widetilde{g_2}(R_\omega(\xi))}\, d\sigma(\xi). 
\end{equation}
This operator emerges naturally in this context since
\begin{equation}\label{e:conv}
(g_1d\sigma)\ast (g_2d\sigma)(x) = c_n \frac{\1_{|x|< 2}}{|x|} BA_{|x|/2}(g_1,g_2)(x/|x|).
\end{equation}
Of course \eqref{e:Leia} is rather special in the case $n=2$ since
\begin{align*}
BA_t(g_1,g_2)(\omega) =&\frac{g_1\bigl(t\omega + \sqrt{1-t^2} \omega^\perp\bigr)g_2\bigl(t\omega - \sqrt{1-t^2} \omega^\perp\bigr)}{\sqrt{1-t^2}}\\
& \;\;\;\;\; + \frac{g_1\bigl(t\omega - \sqrt{1-t^2} \omega^\perp\bigr)g_2\bigl(t\omega + \sqrt{1-t^2} \omega^\perp\bigr)}{\sqrt{1-t^2}}.\end{align*}
In particular, when $n=2$ and $g:\mathbb{S}^1\to [0,\infty)$, we have
\begin{equation}\label{e:n2Id}
(gd\sigma)\ast (gd\sigma )(2x) = c \frac{\1_{|x|<1}}{|x|\sqrt{1-|x|^2}} g(|x| e_x +  \sqrt{1-|x|^2} e_x^\perp)g(|x| e_x -  \sqrt{1-|x|^2} e_x^\perp),
\end{equation}
where $e_x=x/|x|$.

\subsection{Proof of Theorem \ref{t:wStein}}
By the tomography reduction \eqref{basicidea}, the proof of Theorem \ref{t:wStein} may be reduced to the following lemma. We clarify first that the cutoff function $\gamma_R$ in the statement of Theorem \ref{t:wStein} should be taken of the form $\gamma_R(x)=\gamma(x/R)$, where $\gamma(x)=\psi(x)^3$ for some nonnegative radially decreasing $\psi\in\mathcal{S}(\mathbb{R}^2)$ with Fourier support in the unit ball. As will become clear, this specific structure is imposed merely for technical convenience.
\begin{lemma}\label{l:Red23Nov}
For $R\gg1$, $\omega\in\mathbb{S}^1$, and $g:\mathbb{S}^1\to\mathbb{C}$,
\begin{align}\label{e:PW23Nov}
&\big\| (-\Delta_v)^\frac14 X(\gamma_R |\widehat{gd\sigma}|^2)(\omega,\cdot) \big\|_{ L^2_v(\langle \omega \rangle^\perp)}
\lesssim
BT_{1/R} (|g|_{1/R}^2, |g|_{1/R}^2)(\omega)^\frac12 + BT_{1/R} ( |g|_{1/R}^2, |g|_{1/R}^2)(\omega^\perp)^\frac12. \nonumber
\end{align}
\end{lemma}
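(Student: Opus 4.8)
\textbf{Proof plan for Lemma \ref{l:Red23Nov}.}

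The plan is to compute $(-\Delta_v)^{1/4}X(\gamma_R|\widehat{gd\sigma}|^2)(\omega,\cdot)$ on the line $\langle\omega\rangle^\perp$ (one-dimensional when $n=2$) via the Fourier transform in $v$, and to realize the resulting $L^2_v$ norm through Plancherel, picking up the factor $(-\Delta_v)^{1/4}$ as multiplication by $|\tau|^{1/2}$ on the Fourier side. The first step is to record the Fourier slice identity for the X-ray transform: $\widehat{Xf(\omega,\cdot)}(\tau)$ equals the Fourier transform of $f$ restricted to the hyperplane $\langle\omega\rangle^\perp$ through the variable dual to $v$. Applying this to $f=\gamma_R|\widehat{gd\sigma}|^2$, and using that the Fourier transform of $|\widehat{gd\sigma}|^2$ is (a constant times) the autocorrelation measure $(gd\sigma)*(\widetilde{g}d\sigma)$ — where $\widetilde g(\xi)=\overline{g(-\xi)}$ — we get that $\widehat{Xf(\omega,\cdot)}(\tau)$ is, up to the smoothing by $\widehat{\gamma_R}$, the mass of $(gd\sigma)*(\widetilde g d\sigma)$ on the affine line $\{\eta:\eta\cdot\omega = \text{const}\}\cap(\text{line in direction }\tau)$. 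After taking $L^2_v$ norm and Plancherel, we must bound $\int_{\mathbb R}|\tau|\,\big|\widehat{\gamma_R}*[\text{slice of }(gd\sigma)*(\widetilde g d\sigma)]\big|^2\,d\tau$.

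The second step is to exploit the two-dimensional structure via \eqref{e:n2Id}: the convolution $(gd\sigma)*(gd\sigma)$ (and its $\widetilde g$ analogue) is an explicit function supported in $\{|x|<2\}$, with a $|x|^{-1}(1-|x|^2/4)^{-1/2}$ singularity and the geometric factor $g(\cdots)g(\cdots)$ built from the two points of $\mathbb S^1$ lying on the relevant chord. Slicing this along lines and integrating, the key is that the $|x|^{-1}$ and $(1-|x|^2/4)^{-1/2}$ weights, combined with the Jacobian of the slicing, reproduce exactly the kernel $\frac{1}{|\omega\cdot\xi|+\delta}$ of $BT_\delta$ after the mollification by $\gamma_R$ is used to regularize at scale $1/R$; this is where the parameter $\delta=1/R$ and the mollified density $|g|_{1/R}$ enter — the $\widehat{\gamma_R}$-convolution smooths $|g|^2$ at scale $1/R$ (this is the role of the cube structure $\gamma=\psi^3$, so that $\gamma_R$ is itself a triple convolution and we can absorb mollifications onto each of the two copies of $g$ separately), and simultaneously truncates the $|x|^{-1}$ singularity. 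The appearance of the \emph{two} terms $BT_{1/R}(\cdots)(\omega)$ and $BT_{1/R}(\cdots)(\omega^\perp)$ reflects the two ways a line in direction $\tau$ through a point on $\langle\omega\rangle^\perp$ can meet the support disc $\{|x|<2\}$: the "short" chords near the centre contributing the $\omega$-term and the "long" chords near the boundary circle contributing the $\omega^\perp$-term (equivalently, the two branches in \eqref{e:n2Id} organized by whether $\tau$ is roughly aligned with $\omega$ or with $\omega^\perp$). One estimates $|\tau|\lesssim 1$ trivially on the compact frequency support coming from the truncation, or more carefully tracks the $|\tau|^{1/2}$ weight against the decay of the slice; in $n=2$ the weight is harmless because everything is localized.

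The third step is bookkeeping: expand the square $|\widehat{gd\sigma}|^2 = \widehat{gd\sigma}\,\overline{\widehat{gd\sigma}}$, pass to $(gd\sigma)*(\widetilde g d\sigma)$, apply the slicing-and-Plancherel computation above to get an expression of the form $\int (\text{kernel}_\omega)\, |g|_{1/R}^2\otimes|g|_{1/R}^2$, recognize it as $BT_{1/R}(|g|_{1/R}^2,|g|_{1/R}^2)(\omega) + BT_{1/R}(|g|_{1/R}^2,|g|_{1/R}^2)(\omega^\perp)$ up to the square root coming from the $L^2_v$ (rather than $L^1_v$) norm, and take square roots. I expect the main obstacle to be the careful change of variables in Step 2 — matching the slice of the explicit convolution kernel \eqref{e:n2Id}, together with the Jacobian from parametrizing lines in direction $\tau$ and the $|\tau|^{1/2}$ fractional-derivative weight, against precisely the kernel $(|\omega\cdot\xi|+1/R)^{-1}$ appearing in $BT_{1/R}$, and in verifying that the smoothing by $\widehat{\gamma_R}$ legitimately converts $|g|^2$ into $|g|_{1/R}^2$ on each factor while also regularizing the $|x|^{-1}$ and $(1-|x|^2/4)^{-1/2}$ singularities at the scale $1/R$ without losing more than an acceptable constant. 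The reflection $R_\omega$ in $BT_\delta$ should emerge automatically from the fact that the two points of $\mathbb S^1$ on a chord perpendicular to $\omega$ are related by $\xi\mapsto R_\omega(\xi)$, as already displayed in the $n=2$ formula for $BA_t$ preceding the lemma.
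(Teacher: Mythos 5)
Your plan follows essentially the same route as the paper's proof: Plancherel in $v$ together with the Fourier slice theorem converts the square of the left-hand side into an integral of $|\tau|$ times the square of $\widehat{\gamma}_R\ast\bigl[(gd\sigma)\ast(\widetilde{g}d\sigma)\bigr]$ evaluated along the frequency line $\langle\omega\rangle^\perp$; the cube structure $\gamma_R=\psi_R^3$ is used to put one mollification on each copy of $gd\sigma$ (producing $|g|_{1/R}$ via a Poisson-kernel domination) while the third factor regularises the singularities of the explicit formula \eqref{e:n2Id} at scale $1/R$; and the restricted kernel is then matched with the kernels $(|\xi\cdot\omega|+1/R)^{-1}$ and $(|\xi\cdot\omega^\perp|+1/R)^{-1}$ of $BT_{1/R}$, with the reflection $R_\omega$ arising from the chord geometry exactly as you say. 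One small correction of language: by the slice theorem, $\mathcal{F}_v\bigl[Xf(\omega,\cdot)\bigr](\tau)$ is simply the value of $\widehat{f}$ at the point $\tau\in\langle\omega\rangle^\perp$, not a ``mass on an affine line''.

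There is one substantive point where your Step 2, as hedged, would fail: you cannot ``estimate $|\tau|\lesssim 1$ trivially''. After the pointwise bound coming from the mollified version of \eqref{e:n2Id}, the integrand near $\tau=0$ has size $|\tau|\,(|\tau|+R^{-1})^{-2}(1-|\tau|^2+R^{-1})^{-1}$ times the $g_{1/R}$-factors; discarding the weight leaves $(|\tau|+R^{-1})^{-2}$, which upon passing to the circle variable produces the kernel $(|\xi\cdot\omega^\perp|+R^{-1})^{-2}$ rather than $(|\xi\cdot\omega^\perp|+R^{-1})^{-1}$, i.e.\ a quantity that can exceed $BT_{1/R}$ by a factor of order $R$. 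The weight is therefore essential, not merely harmless: one uses $|\tau|(|\tau|+R^{-1})^{-2}\le(|\tau|+R^{-1})^{-1}$ to cancel exactly one power of the central singularity, which is how the paper lands on the stated bound. Relatedly, note that $\tau$ is dual to $v\in\langle\omega\rangle^\perp$ and so always points along $\omega^\perp$; the dichotomy producing the two terms is in the \emph{size} of $|\tau|$, not its direction: for $|\tau|$ small the contributing points of $\mathbb{S}^1$ lie near $\pm\omega$ and the $|x|^{-1}$ factor of \eqref{e:n2Id} becomes $(|\xi\cdot\omega^\perp|+1/R)^{-1}$ (the $\omega^\perp$ term), while for $|\tau|$ near its maximum the points lie near $\pm\omega^\perp$ and the boundary factor combined with the circle Jacobian yields $(|\xi\cdot\omega|+1/R)^{-1}$ (the $\omega$ term). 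With the weight used as above and this bookkeeping carried out, your outline does reproduce the paper's argument.
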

\begin{proof}
By elementary considerations we may reduce to the situation where $g$ is nonnegative and symmetric in the sense that $g(-\cdot)=g$. By the rotation invariance of the expressions involved,
it suffices to handle the case $\omega = e_2$. Moreover, since $\gamma_R=\psi_R^3$,
$$\gamma_R |\widehat{gd\sigma}|^2=\psi_R \cdot \psi_R \widehat{gd\sigma} \cdot \psi_R\overline{\widehat{gd\sigma}}.$$ With this in mind,
by Plancherel's theorem we have
\begin{align*}
&\big\| (-\Delta_v)^\frac14 X(\gamma_R |\widehat{gd\sigma}|^2)(e_2,\cdot) \big\|_{ L^2_v(\langle e_2 \rangle^\perp)}^2
\sim
\int_{\mathbb{R}} |\eta| \left(\widehat{\psi}_R\ast \widehat{\psi}_R \ast  (gd\sigma)\ast  \widehat{\psi}_R \ast (gd\sigma) (\eta,0)\right)^2\, d\eta .
\end{align*}
First we claim that
\begin{equation}\label{e:Poi28Nov}
\widehat{\psi}_R \ast (gd\sigma)(x) \lesssim R g_{1/R}(x/|x|),\;\;\; x\in \mathbb{R}^2,
\end{equation}
where $g_{1/R}$ is the function $g$ mollified at scale $1/R$ using the Poisson kernel on $\mathbb{S}^1$.
To see this we write $x = r(\cos\theta,\sin\theta)$ and $y=(\cos\phi,\sin\phi)$, and observe that
$$
\widehat{\psi}_R \ast (gd\sigma)(x) = R^2 \int_{0}^{2\pi} \widehat{\psi}(R(r\cos\theta - \cos\phi, r\sin\theta - \sin\phi)) g(\cos\phi,\sin\phi)\, d\phi.
$$
Since $\widehat{\psi}$ is assumed to be radially decreasing, that is,  $\widehat{\psi}(x) = h(|x|)$ for some smooth function $h$ supported on $[0,1]$, we have that $$\widehat{\psi}_R \ast (gd\sigma)(x) \le \widehat{\psi}_R \ast (gd\psi)(x/|x|).$$ Since
$$\widehat{\psi}(R(\cos\theta - \cos\phi, \sin\theta - \sin\phi))  = h\bigl(4R\sin^2\bigl(\tfrac{\theta-\phi}{2}\bigr)\bigr) \lesssim R^{-1} p_{1-1/R}(\theta-\phi),
$$
where $p_{r}(\theta) = (1-r^2)/(1-2r\cos\theta +r^2)$ is the Poisson kernel, we conclude that
$$
\widehat{\psi}_R \ast (gd\sigma)(x) \le \widehat{\psi}_R \ast (gd\sigma)(x/|x|) \lesssim R g_{1/R}(x/|x|),
$$
which establishes \eqref{e:Poi28Nov}.
Using this, polar coordinates, and the assumption that $\psi$ has Fourier support in the unit ball, we have
\begin{align*}
\widehat{\psi}_R\ast (gd\sigma) \ast &\widehat{\psi_R}\ast (gd\sigma) (x) \\
\lesssim &
R^2 \int_{\mathbb{R}^2} \chi_{(1-R^{-1}, 1+ R^{-1})}(|x-y|)  \chi_{(1-R^{-1}, 1+ R^{-1})}(|y|)   g_{1/R}(e_{x-y}) g_{1/R}(e_y)\, dy \\
\lesssim &
R^{2} \int_{1-R^{-1}}^{1+R^{-1}} \chi_{(1-R^{-1}, 1+ R^{-1})}(|x-t\xi|) \int_{\mathbb{S}^1} g_{1/R}(e_{x-t\xi}) g_{1/R}(\xi)\, d\sigma(\xi) dt.
\end{align*}
Consequently, by further use of polar coordinates,
\begin{align*}
\widehat{\psi}_R\ast \widehat{\psi}_R\ast &(gd\sigma) \ast \widehat{\psi}_R\ast (gd\sigma) (z) \\
\lesssim&
R^2 \int_{\mathbb{R}^2} \int_{1-R^{-1}}^{1+R^{-1}} \widehat{\psi}_R(z- (x-t\xi) ) \int_{\mathbb{S}^1} \chi_{(1-R^{-1}, 1+ R^{-1})}(|x|) g_{1/R}(e_{x}) g_{1/R}(\xi)\, d\sigma(\xi) dtdx\\
\sim &
R^2 \int_{1-R^{-1}}^{1+R^{-1}} \int_{1-R^{-1}}^{1+R^{-1}} \int_{\mathbb{S}^1} \widehat{\psi}_R(z- (s\tilde{\xi}-t\xi) ) \int_{\mathbb{S}^1}  g_{1/R}(\tilde{\xi}) g_{1/R}(\xi)\, d\sigma(\xi) d\sigma(\tilde{\xi}) dsdt.
\end{align*}
Dominating $\widehat{\psi}_R$ pointwise by a suitable constant multiple of the function
$$
\Phi_{1/R}(x):=\frac{R^2}{(1+|Rx|^2)^N}
$$
for a suitably large natural number $N$, we have
\begin{align*}
\widehat{\psi}_R\ast \widehat{\psi}_R\ast &(gd\sigma) \ast \widehat{\psi}_R\ast (gd\sigma) (z) \\
\lesssim&
\int_{\mathbb{S}^1} \Phi_{1/R}(z- (\tilde{\xi}-\xi) ) \int_{\mathbb{S}^1}  g_{1/R}(\tilde{\xi}) g_{1/R}(\xi)\, d\sigma(\xi) d\sigma(\tilde{\xi}) \\
=&
\int_{\mathbb{R}^2} \Phi_{1/R}(z- \tilde{\xi} ) \int_{\mathbb{S}^1}  g_{1/R}(\tilde{\xi}-\xi) \delta(1-|\tilde{\xi}-\xi|^2) g_{1/R}(\xi)\, d\sigma(\xi) d\tilde{\xi}\\
=&
\Phi_R\ast (g_{1/R} d\sigma) \ast (g_{1/R} d\sigma)(z),
\end{align*}
where we have used the local constancy property of the function $\Phi_{1/R}$ at scale $1/R$.
Using the formula \eqref{e:n2Id} and the locally constant property of $g_{1/R}$ at scale $R^{-1}$ we obtain
\begin{align*}
 \widehat{\psi}_R\ast \widehat{\psi}_R\ast &(gd\sigma) \ast \widehat{\psi}_R\ast (gd\sigma) (z) \\
\lesssim&
\int_{\mathbb{R}^2} \Phi_{1/R}(z-y) \frac{\1_{|y|<1}}{|y|\sqrt{1-|y|^2}} g_{1/R}(|y|e_y + \sqrt{1-|y|^2}e_y^\perp) g_{1/R}(|y|e_y - \sqrt{1-|y|^2}e_y^\perp)\, dy \\
\sim&
\int_{\mathbb{R}^2} \Phi_{1/R}(z-y) \frac{\1_{|y|<1}}{|y|\sqrt{1-|y|^2}} \, dy g_{1/R}(|z|e_z + \sqrt{1-|z|^2}e_z^\perp) g_{1/R}(|z|e_z - \sqrt{1-|z|^2}e_z^\perp) \\
\sim&
\frac{\1_{|z|<1}}{(|z|+R^{-1}) \sqrt{1-|z|^2 + R^{-1}}} g_{1/R}(|z|e_z + \sqrt{1-|z|^2}e_z^\perp) g_{1/R}(|z|e_z - \sqrt{1-|z|^2}e_z^\perp).
\end{align*}
Consequently we conclude that
\begin{align*}
\big\| (-\Delta_v)^\frac14 X&(\gamma_R |\widehat{gd\sigma}|^2)(e_2,\cdot) \big\|_{ L^2_v(\langle e_2 \rangle^\perp)}^2 \\
\lesssim &
\int_{\mathbb{R}}  \frac{\1_{|\eta|<1}}{(|\eta|+R^{-1}) (1-|\eta|^2 + R^{-1})} g_{1/R}(|\eta|, \sqrt{1-|\eta|^2})^2 g_{1/R}(|\eta|, -\sqrt{1-|\eta|^2})^2\, d\eta \\
\lesssim&
\int_{\mathbb{S}^1} g_{1/R}(\xi)^2 \widetilde{g}_{1/R}(R_{e_1}(\xi))^2\, \frac{d\sigma(\xi)}{|\xi\cdot e_1| +R^{-1}} + \int_{\mathbb{S}^1} g_{1/R}(\xi)^2 \widetilde{g}_{1/R}(R_{e_2}(\xi))^2\, \frac{d\sigma(\xi)}{|\xi\cdot e_2| +R^{-1}},
\end{align*}
since $g$ is assumed to be symmetric.
\end{proof}
\subsection{Proof of Theorem \ref{t:wMizTak}}
As we observe in the introduction, \eqref{e:Reduced00} with $q=1$ follows immediately from Theorem \ref{t:main}, and hence it suffices to prove \eqref{e:Reduced}.
As we explain in the introduction, thanks to Theorem \ref{t:wStein}, the proof of Theorem \ref{t:wMizTak} may be reduced to establishing \eqref{e:BT1/2}.
\begin{proof}[Proof of \eqref{e:BT1/2}]
We assume, as we may, that $g_1,g_2$ are nonnegative and symmetric. Writing $\omega = (\cos\theta,\sin\theta)$ and $\xi = (\cos\varphi,\sin\varphi)$ in \eqref{e:BilinearT}, we have
$$
BT_\delta(g_1,g_2)(\omega) = \int_0^{2\pi} \frac{G_1(\varphi) G_2(2\theta-\varphi)}{|\cos(\theta-\varphi)| + \delta}\, d\varphi
= \int_0^{2\pi} \frac{G_1(\theta - \varphi) G_2(\theta+\varphi)}{|\cos\varphi| + \delta}\, d\varphi,
$$
where $G_i(\varphi) = g_i(\cos\varphi,\sin\varphi)$ for $i=1,2$.
After considering suitable rotations, the inequality \eqref{e:BT1/2} may be reduced to showing that
\begin{equation}\label{e:Goal23Nov2}
\int_0^{2\pi} \Bigg( \int_{10R^{-1}}^{1/100} \frac{h_1(\theta + \varphi) h_2(\theta - \varphi)} {\varphi}  \,  d\varphi \Bigg)^\frac12\, d\theta \lesssim \log(R) \|h_1\|_1^\frac12 \|h_2\|_1^\frac12.
\end{equation}
To do this we use a localisation argument of Kenig and Stein from their analysis of bilinear fractional integrals in \cite{KenigStein}.
Since we allow a logarithmic loss in $R$, and have
$$
\int_0^{2\pi} \Bigg( \int_{10R^{-1}}^{1/100} \frac{h_1(\theta + \varphi) h_2(\theta - \varphi)} {\varphi}  \,  d\varphi \Bigg)^\frac12\, d\theta
\lesssim
\sum_{k=0}^{\log(R)} \int_0^{2\pi} \Bigg( \dashint_{\varphi \sim 2^{-k}} h_1(\theta + \varphi) h_2(\theta - \varphi) \,  d\varphi \Bigg)^\frac12\, d\theta,
$$
it suffices to prove that
\begin{equation}\label{e:Goal23Nov3}
\int_0^{2\pi} \Bigg( \dashint_{\varphi \sim \lambda} h_1(\theta + \varphi) h_2(\theta - \varphi) \,  d\varphi \Bigg)^\frac12\, d\theta \lesssim \|h_1\|_1^\frac12\|h_2\|_1^\frac12,
\end{equation}
uniformly in the (dyadic) scale $0<\lambda<1$.
For each $\lambda$, we decompose $[0,2\pi] = \bigcup_{i=1}^{O(\lambda^{-1})} I_i(\lambda)$, where $I_i(\lambda) = [(i-1)\lambda,i\lambda]$, and use the Cauchy--Schwarz inequality to obtain
\begin{align*}
\int_0^{2\pi} \Bigg( \dashint_{\varphi \sim \lambda} h_1(\theta + \varphi) h_2(\theta - \varphi) \,  d\varphi \Bigg)^\frac12\, d\theta
&=
\sum_i \int_{I_i(\lambda)} \Bigg( \dashint_{\varphi \sim \lambda} h_1(\theta + \varphi) h_2(\theta - \varphi) \,  d\varphi \Bigg)^\frac12\, d\theta \\
&\le
\lambda^\frac12 \sum_i \Bigg( \int_{I_i(\lambda)} \dashint_{\varphi \sim \lambda} h_1(\theta + \varphi) h_2(\theta - \varphi) \,  d\varphi  d\theta \Bigg)^\frac12.
\end{align*}
Notice that $\theta\pm \varphi \in J_i(\lambda):= [(i-10)\lambda,(i+10)\lambda]$ whenever $\theta \in I_i(\lambda)$ and $\varphi \sim \lambda$, and so we conclude that
\begin{align*}
\int_0^{2\pi} \Bigg( \dashint_{\varphi \sim \lambda} h_1(\theta + \varphi) h_2(\theta - \varphi) \,  d\varphi \Bigg)^\frac12\, d\theta
\lesssim&
\sum_i \Bigg( \int_{0}^{2\pi} \int_0^{2\pi} h_1\cdot \1_{J_i(\lambda)}(\theta + \varphi) h_2\cdot \1_{J_i(\lambda)}(\theta - \varphi) \,  d\varphi  d\theta \Bigg)^\frac12\\
\sim&
\sum_i \big( \|h_1\cdot \1_{J_i(\lambda)}\|_{1} \|h_2\cdot \1_{J_i(\lambda)}\|_1 \big)^\frac12
\\
\lesssim&
\|h_1\|_1^\frac12\|h_2\|_1^\frac12,
\end{align*}
thanks to the almost disjointness of the intervals $J_i(\lambda)$.
\end{proof}

\textbf{Remark.}
The argument above raises the question of the validity of weighted inequalities of the form
\begin{equation*}
\int_0^{2\pi} \Bigg( \int_0^{2\pi} \frac{f_1(\theta-\varphi)f_2(\theta+\varphi)}{|\varphi| + \delta} \, d\varphi\Bigg)^\frac12\, w(\theta)d\theta \lesssim \log(\delta^{-1}) \Bigg( \int_0^{2\pi} |f_1| w\Bigg)^\frac12 \Bigg( \int_0^{2\pi} |f_2| w \Bigg)^\frac12.
\end{equation*}
Indeed, if this were true with  a weight of the form $\mathcal{S}w$, then \eqref{e:wStein0} would reduce to
$$
\int_{B_R} |\widehat{gd\sigma}|^2 w \lesssim \log(R) \int_{\mathbb{S}^1} |g|_{1/R}(\omega)^2 \mathcal{S}w(\omega)\, d\sigma(\omega),
$$
which is closer to the intended form of Stein's conjecture than \eqref{e:wStein0}. Perhaps more realistically one might look for a Fefferman--Stein type estimate of the form
$$
\int_0^{2\pi} \Bigg( \int_0^{2\pi} \frac{f_1(\theta-\varphi)f_2(\theta+\varphi)}{|\varphi| + \delta} \, d\varphi \Bigg)^\frac12\, w(\theta)d\theta \lesssim \log(\delta^{-1}) \Bigg( \int_0^{2\pi} |f_1| Mw\Bigg)^\frac12 \Bigg( \int_0^{2\pi} |f_2| Mw \Bigg)^\frac12,
$$
for an appropriate maximal operator $M$. We do not pursue this here, but note some closely-related results in the context of bilinear fractional integrals -- see, for example \cite{Moen}, and the references there.

\subsection{Results in dimensions $n\ge3$}
We conclude this section with a higher dimensional analogue of Theorem \ref{t:wMizTak}. From a technical point of view relating to finiteness, it will be a little more convenient here to include a slightly higher power of $-\Delta_v$ in our estimates, rather than insert a truncation factor inside the $X$-ray transform as we did for $n=2$.
\begin{theorem}\label{t:wMizTakHigh}
Let $n\ge3$ and $p_n=4(n-1)/(2n-3)$. Then the inequality
\begin{equation}\label{e:wMizTakHigh1}
\big\| (-\Delta_v)^{\frac12(1-\frac{n-1}2)+\varepsilon} X(|\widehat{gd\sigma}|^2) \big\|_{L^1_\omega L^2_{v}(\mathcal{M}_{1,n})} \lesssim_\varepsilon \|g\|_{L^{p}(\mathbb{S}^{n-1})}^2
\end{equation}
holds for all $\varepsilon>0$ if and only if $p\ge p_n$. 
Furthermore, 
\begin{equation}\label{e:wMizTakHigh}
\big\| (-\Delta_v)^{\frac12(1-\frac{n-1}2)+\varepsilon} X(|\widehat{gd\sigma}|^2) \big\|_{L^2_{\omega,v}(\mathcal{M}_{1,n})} \lesssim_\varepsilon \|g\|_{L^{p_n}(\mathbb{S}^{n-1})}^2.
\end{equation}
\end{theorem}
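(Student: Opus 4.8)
The plan is to compute the left-hand sides of both inequalities exactly, reducing them to a single weighted $L^2$ estimate for the bilinear operator $BA_t$ of \eqref{e:Leia}, and then to derive that estimate from sharp bounds for the uncentred spherical Radon transforms $A_t$. Write $a=\tfrac12(1-\tfrac{n-1}2)+\varepsilon$ for the order of the derivative in the statement. First I would use the Fourier slice theorem: for fixed $\omega$ the $(n-1)$-dimensional Fourier transform of $v\mapsto X(|\widehat{gd\sigma}|^2)(\omega,v)$ equals the restriction to $\langle\omega\rangle^\perp$ of the Fourier transform of $|\widehat{gd\sigma}|^2$, which is a fixed multiple of $(gd\sigma)\ast(\widetilde{g}d\sigma)$. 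Plancherel on $\langle\omega\rangle^\perp$, the geometric identity \eqref{e:conv}, and polar coordinates on $\langle\omega\rangle^\perp$ then give
\begin{equation*}
\big\|(-\Delta_v)^{a}X(|\widehat{gd\sigma}|^2)(\omega,\cdot)\big\|_{L^2_v}^2\sim_\varepsilon\int_{\mathbb S^{n-1}\cap\langle\omega\rangle^\perp}\int_0^1 t^{-1+4\varepsilon}\big|BA_t(g,\widetilde{g})(\psi)\big|^2\,dt\,d\sigma(\psi),
\end{equation*}
the exponent $a$ being calibrated precisely so that the radial weight collapses to the integrable weight $t^{-1+4\varepsilon}$ — this is the only reason $\varepsilon>0$ is needed. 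Integrating in $\omega$, and using $\int_{\mathbb S^{n-1}}A_0h\,d\sigma=\mathfrak C_n\int_{\mathbb S^{n-1}}h\,d\sigma$, turns the left side of \eqref{e:wMizTakHigh} into $\|F\|_{L^2(\mathbb S^{n-1})}^2$ (up to an $\varepsilon$-dependent constant), where $F(\psi)^2:=\int_0^1 t^{-1+4\varepsilon}|BA_t(g,\widetilde{g})(\psi)|^2\,dt$, while the left side of \eqref{e:wMizTakHigh1} becomes a fixed multiple of $\|(A_0(F^2))^{1/2}\|_{L^1(\mathbb S^{n-1})}$; since $\mathbb S^{n-1}$ has finite measure, Cauchy--Schwarz bounds the latter by $\|F\|_{L^2(\mathbb S^{n-1})}$. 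Thus \eqref{e:wMizTakHigh} will imply \eqref{e:wMizTakHigh1} for $p=p_n$, and then for all $p\ge p_n$ by Hölder, and it remains to prove $\|F\|_{L^2(\mathbb S^{n-1})}\lesssim_\varepsilon\|g\|_{L^{p_n}(\mathbb S^{n-1})}^2$.

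For that, I would first discard the bilinear structure. Since $R_\psi$ maps the slice $\{\xi\cdot\psi=t\}$ onto $\{\xi\cdot\psi=-t\}$ and $d\sigma$ is symmetric, the inequality $|g(\xi)||g(R_\psi\xi)|\le\tfrac12(|g(\xi)|^2+|g(R_\psi\xi)|^2)$ gives $|BA_t(g,\widetilde{g})(\psi)|\le\tfrac12(A_t(|g|^2)(\psi)+A_{-t}(|g|^2)(\psi))$; so, using $\|g\|_{L^{p_n}}^2=\||g|^2\|_{L^{p_n/2}}$ with $p_n/2=\tfrac{2(n-1)}{2n-3}$, everything reduces to $\int_0^1 t^{-1+4\varepsilon}\|A_th\|_{L^2(\mathbb S^{n-1})}^2\,dt\lesssim_\varepsilon\|h\|_{L^{p_n/2}(\mathbb S^{n-1})}^2$. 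This in turn follows from the uniform bound
\begin{equation*}
\|A_th\|_{L^2(\mathbb S^{n-1})}\lesssim(1-t^2)^{\frac{n-4}{4}}\,\|h\|_{L^{2(n-1)/(2n-3)}(\mathbb S^{n-1})},\qquad t\in(-1,1),
\end{equation*}
because $\int_0^1 t^{-1+4\varepsilon}(1-t^2)^{(n-4)/2}\,dt<\infty$. For $t$ bounded away from $\pm1$ the displayed inequality is the endpoint Funk-transform estimate dual to the sharp embedding $A_0\colon L^2\to H^{(n-2)/2}\hookrightarrow L^{2(n-1)}$, extended to such $t$ by the non-degeneracy of $\mathrm{rotcurv}(\Phi_t)$, just as in the proof of Lemma \ref{Chrot}; for $t$ near $\pm1$ one covers $\mathbb S^{n-1}$ by $O((1-t^2)^{1/2})$-caps, observes that on each cap $A_t$ equals, up to the factor $(1-t^2)^{(n-3)/2}$, a normalised average over a sphere of radius $(1-t^2)^{1/2}$ in $\mathbb R^{n-1}$, rescales this to unit scale, and reassembles the caps using $\sum_j\|h\1_{Q_j}\|_{L^{p_n/2}}^2\le\|h\|_{L^{p_n/2}}^2$ (valid since $p_n\le4$). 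This establishes \eqref{e:wMizTakHigh}, and hence the sufficiency of $p\ge p_n$ in \eqref{e:wMizTakHigh1}.

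For the necessity I would test \eqref{e:wMizTakHigh1} against $g=\1_C$ with $C=B(\xi_0,\delta)\cap\mathbb S^{n-1}$, so that $\|g\|_{L^p}^2\sim\delta^{2(n-1)/p}$. An elementary geometric computation shows that $BA_t(g,\widetilde{g})(\psi)$ is nonzero only when $|t|\lesssim\delta$ and $\psi$ lies within $O(\delta)$ of the great sphere $\langle\xi_0\rangle^\perp\cap\mathbb S^{n-1}$, where it has size $\sim\delta^{n-2}$; consequently $F\gtrsim\delta^{n-2}$ on that $O(\delta)$-neighbourhood, $A_0(F^2)\gtrsim\delta^{2n-3}$ on a set of $\psi$ of positive measure, and the left side of \eqref{e:wMizTakHigh1} is $\gtrsim\delta^{(2n-3)/2}=\delta^{2(n-1)/p_n}$. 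Letting $\delta\to0$ forces $p\ge p_n$.

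The step I expect to be the main obstacle is the uniform-in-$t$ endpoint bound for $A_t$ used above: \cite{Christ84} supplies it only at $t=0$ (together with the explicit profile when $n=3$), so one must confirm the stability of the rotational-curvature argument at the exponent pair $(2(n-1)/(2n-3),2)$ for small $t$ and pin down the sharp degeneration rate $\tfrac{n-4}{4}$ as $t\to\pm1$, which is exactly what keeps the concluding $t$-integral convergent. A lesser difficulty, in the spirit of \eqref{multiscalebound}, is verifying that the $\delta$-cap example really saturates \eqref{e:wMizTakHigh1} with no logarithmic loss.
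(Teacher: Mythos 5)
Your proposal is correct in outline and shares the paper's skeleton: the Plancherel/polar-coordinate reduction via \eqref{e:conv} is exactly Lemma \ref{l:Reduce}, the passage from \eqref{e:wMizTakHigh} to \eqref{e:wMizTakHigh1} by Cauchy--Schwarz in $\omega$ and H\"older in $p$ is the same, and your single-cap necessity example is (up to the antipodal symmetrisation hidden in the tilde) the paper's example, yielding the same lower bound $\delta^{(2n-3)/2}$ up to a harmless $\delta^{O(\varepsilon)}$ factor which disappears on letting $\varepsilon\to 0$, just as in the paper. Where you genuinely diverge is the core estimate \eqref{e:Goal24Nov}: you linearise at once (your AM--GM bound $|BA_t|\le\tfrac12(A_t(|g|^2)+A_{-t}(|g|^2))$ in place of the paper's Cauchy--Schwarz) and then require the sharp uniform estimate $\|A_th\|_{L^2}\lesssim(1-t^2)^{(n-4)/4}\|h\|_{L^{2(n-1)/(2n-3)}}$ on all of $(-1,1)$, whereas the paper never uses $A_t$ away from $t=0$: it splits at a small $t_n$, uses Lemma \ref{Chrot} interpolated with the trivial $L^1$ bound for $t<t_n$, and for the remaining range computes $\int_{\mathbb{S}^{n-1}}\int_0^1 BA_t(g_1,g_2)^2$ exactly, integrating out $\delta((\xi-\eta)\cdot\omega)$ to produce the kernel $|\xi-\eta|^{-1}$, and finishes with Hardy--Littlewood--Sobolev on the sphere plus bilinear interpolation from the asymmetric bound $\|g_1\|_{p_n/2}\|g_2\|_\infty$. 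Your route trades that soft argument for a harder but sharper linear ingredient, and that ingredient is in fact true: for $n=3$ it is precisely Christ's quoted bound $\|A_tf\|_3\le C(1-t^2)^{-1/3}\|f\|_{3/2}$ interpolated with $L^1\to L^1$; for general $n$ and $t$ in compact subsets of $(-1,1)$ the rotational curvature does not degenerate (computing the determinant in \eqref{e:rotcurve} in an adapted orthonormal frame at a point $(x_0,y_0)$ with $x_0\cdot y_0=t$ gives the value $1-t^2$ on the incidence set, so the Lemma \ref{Chrot} argument extends, whereas the paper only verifies nondegeneracy near $t=0$ because that is all it needs); and near $t=\pm1$ your cap decomposition, rescaling to perturbed unit-sphere averages in $\mathbb{R}^{n-1}$, the Littman-type endpoint $L^{2(n-1)/(2n-3)}\to L^2$ there, and the $\ell^{p_n/2}\subset\ell^2$ reassembly do yield the rate $(1-t^2)^{(n-4)/4}$, which keeps the $t$-integral finite even when $n=3$. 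So nothing in your plan fails, but the uniform $A_t$ bound you rightly flag as the main obstacle is the one substantive piece left unproved and carries the real work in your version; the paper's HLS/bilinear-interpolation detour buys a shorter proof that sidesteps both the intermediate-$t$ curvature verification and the endpoint rescaling analysis, while your version, once completed, delivers a sharp weighted $L^{p_n/2}\to L^2$ theory for the full family $\{A_t\}_{|t|<1}$ that is of independent interest.
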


\textbf{Remark.}
As $c_n^{1/2}(-\Delta)^{1/4}X$ is an isometry,
$$
\big\| (-\Delta_v)^{\frac\alpha2} Xf \big\|_{L^2_{\omega,v}(\mathcal{M}_{1,2})} = c_n^{\frac{1}{2}} \big\| (-\Delta_x)^{\frac12(\alpha-\frac12)}f\big\|_{L^2(\mathbb{R}^n)}
$$
for all $\alpha\in\mathbb{R}$, and so
\eqref{e:wMizTakHigh} is equivalent to
\begin{equation}\label{e:RieszExt}
\big\| (-\Delta_x)^{-\frac{n-2}{4} +\varepsilon}|\widehat{gd\sigma}|^2 \big\|_{L^2_{\omega,v}(\mathcal{M}_{1,n})} \lesssim_\varepsilon \|g\|_{L^{p_n}(\mathbb{S}^{n-1})}^2.
\end{equation}
Using the boundedness of the Riesz potential $(-\Delta_x)^{-\frac{n-2}{4} +\varepsilon}$: $L^{p(\varepsilon)/2}(\mathbb{R}^n) \to L^2(\mathbb{R}^{n})$, where $p(\varepsilon) \searrow \frac{2n}{n-1}$ as $\varepsilon \to0$, the left-hand side of \eqref{e:wMizTakHigh} may be controlled by
$
\|\widehat{gd\sigma}\|_{L^{p(\varepsilon)}(\mathbb{R}^n)}^2.
$
Therefore if the restriction conjecture \eqref{restconj} is true then, we have
$$
\big\| (-\Delta_x)^{-\frac{n-2}4 +\varepsilon}|\widehat{gd\sigma}|^2 \big\|_{L^2_{\omega,v}(\mathcal{M}_{1,n})}
\lesssim_\varepsilon \|g\|_{L^{\frac{2n}{n-1}}(\mathbb{S}^{n-1})}^2
$$
for arbitrary small $\varepsilon>0$. Since $2n/(n-1) > p_n$, this bound is weaker than \eqref{e:RieszExt}, providing a further illustration of the improvements available to the composition of $X$ with $|\widehat{gd\sigma}|^2$.

Applying the tomography reduction \eqref{basicidea}, Theorem \ref{t:wMizTakHigh} implies the following.
\begin{corollary}
Let $n\ge3$. For every $\varepsilon>0$,
$$
\int_{\mathbb{R}^n} |\widehat{gd\sigma}|^2 w \lesssim_\varepsilon \big\| (-\Delta_v)^{\frac{n-1}4 - \varepsilon} Xw \big\|_{L^2_{\omega,v} (\mathcal{M}_{1,n})} \|g\|_{L^\frac{4(n-1)}{2n-3}(\mathbb{S}^{n-1})}^2.
$$
\end{corollary}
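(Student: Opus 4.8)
The plan is to read the Corollary straight off the $L^2_{\omega,v}$ estimate \eqref{e:wMizTakHigh} in Theorem~\ref{t:wMizTakHigh} by running the tomography reduction \eqref{basicidea}. Recall that $c_n^{1/2}(-\Delta_v)^{1/4}X$ is an isometry of $L^2(\mathbb R^n)$ onto $L^2(\mathcal M_{1,n})$, and that $(-\Delta_v)$, acting fibrewise on $v\in\langle\omega\rangle^\perp$, is positive and self-adjoint, so that fractional powers of it may be transferred freely across an inner product on $L^2(\mathcal M_{1,n})$. Polarising the isometry and then moving a factor $(-\Delta_v)^{1/2}$ from one side of the pairing to the other, I would write, for each $\varepsilon>0$,
\[
\int_{\mathbb R^n}|\widehat{gd\sigma}|^2\,w \;=\; c_n\big\langle(-\Delta_v)^{\frac14}X(|\widehat{gd\sigma}|^2),\,(-\Delta_v)^{\frac14}Xw\big\rangle_{L^2(\mathcal M_{1,n})} \;=\; c_n\big\langle(-\Delta_v)^{\frac12(1-\frac{n-1}{2})+\varepsilon}X(|\widehat{gd\sigma}|^2),\,(-\Delta_v)^{\frac{n-1}{4}-\varepsilon}Xw\big\rangle_{L^2(\mathcal M_{1,n})},
\]
the second identity being merely the bookkeeping $\big(\tfrac12(1-\tfrac{n-1}2)+\varepsilon\big)+\big(\tfrac{n-1}4-\varepsilon\big)=\tfrac12$, which records that these two exponents split the single power $(-\Delta_v)^{1/2}$ passed between the two factors. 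In particular the exponent appearing on $X(|\widehat{gd\sigma}|^2)$ is exactly the one occurring in \eqref{e:wMizTakHigh}.

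With the pairing written in this asymmetric form, the proof is completed by the Cauchy--Schwarz inequality in $L^2_{\omega,v}(\mathcal M_{1,n})$ followed by \eqref{e:wMizTakHigh}, which bounds the first factor by $\lesssim_\varepsilon\|g\|_{L^{p_n}(\mathbb S^{n-1})}^2$ with $p_n=\tfrac{4(n-1)}{2n-3}$:
\[
\int_{\mathbb R^n}|\widehat{gd\sigma}|^2\,w \;\le\; c_n\big\|(-\Delta_v)^{\frac12(1-\frac{n-1}{2})+\varepsilon}X(|\widehat{gd\sigma}|^2)\big\|_{L^2_{\omega,v}}\big\|(-\Delta_v)^{\frac{n-1}{4}-\varepsilon}Xw\big\|_{L^2_{\omega,v}} \;\lesssim_\varepsilon\; \big\|(-\Delta_v)^{\frac{n-1}{4}-\varepsilon}Xw\big\|_{L^2_{\omega,v}(\mathcal M_{1,n})}\,\|g\|_{L^{\frac{4(n-1)}{2n-3}}(\mathbb S^{n-1})}^2,
\]
which is exactly the asserted inequality.

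The one step that is not purely formal — and hence the one I would set up with care — is the justification of the opening identity. Since $\tfrac12(1-\tfrac{n-1}{2})+\varepsilon<0$ once $n\ge4$, the function $|\widehat{gd\sigma}|^2$ need not lie in $L^2(\mathbb R^n)$, so the isometry cannot be applied to it verbatim; moreover no decay of $w$ has been assumed. I would handle this exactly as the tomography reductions are handled elsewhere in the paper (cf.\ Section~\ref{Sect:appl}): reduce to $w\ge 0$, establish the identity first for a smooth compactly supported weight (where every quantity is finite, $|\widehat{gd\sigma}|^2$ is bounded, and the isometry applies directly), and pass to a general $w$ by truncating it to $B_R$ with a smooth bump and letting $R\to\infty$, using monotone convergence on the left-hand side while the first factor on the right is untouched by the truncation and is already controlled by \eqref{e:wMizTakHigh}; the contribution of the second factor is handled by the standard (equivalently, $\mathbb R^n$-side) fractional-Sobolev limiting argument, and if the right-hand side of the Corollary is infinite there is nothing to prove. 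This limiting procedure is the sole source of technical work, and it is precisely what the slightly higher power of $-\Delta_v$, flagged in the discussion preceding Theorem~\ref{t:wMizTakHigh}, is designed to accommodate; granted it, the Corollary is an immediate consequence of \eqref{e:wMizTakHigh}.
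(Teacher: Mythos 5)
Your argument is exactly the paper's: the corollary is obtained by running the tomography reduction \eqref{basicidea} with the power $(-\Delta_v)^{1/2}$ split as $\bigl(\tfrac12(1-\tfrac{n-1}{2})+\varepsilon\bigr)+\bigl(\tfrac{n-1}{4}-\varepsilon\bigr)$, applying Cauchy--Schwarz on $L^2_{\omega,v}(\mathcal{M}_{1,n})$, and invoking \eqref{e:wMizTakHigh}. Your additional care about justifying the opening identity by truncation is a reasonable elaboration of what the paper leaves implicit, and does not change the route.
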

In particular, we have the following weak version of \eqref{e:wMizTak} with $q=2$:
$$
\int_{\mathbb{R}^n} |\widehat{gd\sigma}|^2 w \lesssim_\varepsilon \big\| (-\Delta_v)^{\frac{n-1}4 - \varepsilon} Xw \big\|_{L^\infty_\omega L^2_{v} (\mathcal{M}_{1,n})} \|g\|_{L^\frac{4(n-1)}{2n-3}(\mathbb{S}^{n-1})}^2.
$$

We prove Theorem \ref{t:wMizTakHigh} by first reducing it to a statement involving $BA_t$ defined by \eqref{e:Leia}. In what follows we write $\mathbb{S}^{n-2}_\omega = \mathbb{S}^{n-1}\cap \langle \omega \rangle^\perp$ and $d\sigma_{\mathbb{S}^{n-2}_\omega}(\xi) = \delta(1-|\xi|^2)\delta(\xi\cdot \omega) d\xi$.
\begin{lemma}\label{l:Reduce}
Let $n\ge3$ and $1\le q <\infty$. Then for $\varepsilon>0$,
\begin{align*}
\big\| (-\Delta_v)^{\frac12 (1- \frac{n-1}2) +\varepsilon} &X( |\widehat{gd\sigma}|^2) \big\|_{L^q_\omega L^2_v(\mathcal{M}_{1,n})}^q \\
\sim&
\int_{\mathbb{S}^{n-1}} \Bigg( \int_{\mathbb{S}^{n-2}_\omega} \int_{0}^1 BA_{t}(g,g)(u)^2\, t^{-1 + 2\varepsilon}dt d\sigma_{\mathbb{S}^{n-2}_\omega}(u) \Bigg)^\frac q2\, d\sigma(\omega),
\end{align*}
where the implicit constant depends only on $n$ and $q$.
\end{lemma}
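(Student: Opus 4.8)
The plan is to reduce the estimate, for each fixed $\omega$, to a one–parameter integral by combining the Fourier slice theorem for the $X$-ray transform with the convolution identity \eqref{e:conv}.

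First I would fix $\omega$ and identify $\langle\omega\rangle^\perp$ with $\mathbb{R}^{n-1}$. For $h\colon\mathbb{R}^n\to\mathbb{C}$ the slice identity $\mathcal{F}_v[Xh(\omega,\cdot)](\eta)=\widehat{h}(\eta)$, valid for $\eta\in\langle\omega\rangle^\perp$, is immediate on writing $Xh(\omega,v)=\int_{\mathbb{R}}h(v+s\omega)\,ds$ and changing variables $x=v+s\omega$, using that $\pi_\omega(x)\cdot\eta=x\cdot\eta$ when $\eta\perp\omega$. Hence, by Plancherel on $\langle\omega\rangle^\perp$,
\begin{equation*}
\big\|(-\Delta_v)^{\frac12(1-\frac{n-1}2)+\varepsilon}Xh(\omega,\cdot)\big\|_{L^2_v}^2 = c_n\int_{\langle\omega\rangle^\perp}|\eta|^{2-(n-1)+4\varepsilon}\,|\widehat h(\eta)|^2\,d\eta .
\end{equation*}
(For $g\in L^2$ the function $X(|\widehat{gd\sigma}|^2)$ is bounded, by \eqref{margest}, but not integrable, so $\widehat h$ is a priori only a distribution; the finiteness of the quantities involved and the legitimacy of these manipulations are secured by a routine density/limiting argument, the factor $+\varepsilon$ ensuring convergence of the right–hand side near $\eta=0$.)

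Next I would take $h=|\widehat{gd\sigma}|^2$ and use that $\overline{\widehat{gd\sigma}}=\widehat{\widetilde{g}d\sigma}$, with $\widetilde{g}(\xi)=\overline{g(-\xi)}$ as in \eqref{e:Bilinearize}, to write $\widehat h=c_n(g\,d\sigma)\ast(\widetilde{g}\,d\sigma)$; by the identity \eqref{e:conv} this equals $c_n|\eta|^{-1}\1_{|\eta|<2}\,BA_{|\eta|/2}(g,g)(\eta/|\eta|)$, the conjugate in $h$ being absorbed exactly by the $\widetilde{\,\cdot\,}$ built into the definition of $BA$ (if one prefers, one may first reduce to $g\ge 0$ symmetric, as in the proof of Lemma \ref{l:Red23Nov}, so that $\widetilde{g}=g$). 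Substituting this into the Plancherel identity, passing to polar coordinates $\eta=r\zeta$ with $\zeta\in\mathbb{S}^{n-1}\cap\langle\omega\rangle^\perp=\mathbb{S}^{n-2}_\omega$ (so $d\eta=r^{n-2}\,dr\,d\sigma_{\mathbb{S}^{n-2}_\omega}(\zeta)$), and then setting $r=2s$, the powers combine as $r^{2-(n-1)+4\varepsilon}\cdot r^{-2}\cdot r^{n-2}=r^{-1+4\varepsilon}$, and $s$ ranges over $(0,1)$ since the integrand vanishes for $|\eta|\ge 2$; up to harmlessly relabelling $\varepsilon$ this gives
\begin{equation*}
\big\|(-\Delta_v)^{\frac12(1-\frac{n-1}2)+\varepsilon}X(|\widehat{gd\sigma}|^2)(\omega,\cdot)\big\|_{L^2_v}^2 \sim \int_{\mathbb{S}^{n-2}_\omega}\int_0^1 BA_s(g,g)(\zeta)^2\,s^{-1+2\varepsilon}\,ds\,d\sigma_{\mathbb{S}^{n-2}_\omega}(\zeta).
\end{equation*}
Raising to the power $q/2$ and integrating $d\sigma(\omega)$ then yields the lemma, with a constant depending only on $n$ and $q$.

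Given \eqref{e:conv}, the argument is largely bookkeeping, and I expect the one genuinely delicate point to be the identification $\widehat{|\widehat{gd\sigma}|^2}=c_n(g\,d\sigma)\ast(\widetilde{g}\,d\sigma)$ together with the matching of all reflections and conjugations so that precisely $BA_s(g,g)$ appears. The mechanism underlying \eqref{e:conv} — and the reason the collapse to a clean one–variable integral in $s$ occurs rather than an entangled double integral — is that $\widehat{|\widehat{gd\sigma}|^2}$ restricted to $\langle\omega\rangle^\perp$ is, up to constants, the $t$-average over $(-1,1)$ of the autocorrelation of the measure on the $(n-2)$-sphere of radius $\sqrt{1-t^2}$ in $\langle\omega\rangle^\perp$ obtained by projecting $g\,d\sigma_{\omega,t}$ (this is Theorem \ref{planchX} seen on the Fourier side): two such equal spheres meet transversally along an $(n-3)$-sphere at a \emph{constant} angle, so the relevant coarea Jacobian is constant on the intersection, and after re-parametrising $(t,\text{position on that }(n-3)\text{-sphere})$ by a point of $\mathbb{S}^{n-1}\cap\langle\eta\rangle^\perp$ the powers of the latitude cancel exactly. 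One could in fact run this computation by hand as an alternative to invoking \eqref{e:conv}; either way the remaining finiteness/density preliminaries for the slice–plus–Plancherel step and the exponent arithmetic in the change of variables are routine.
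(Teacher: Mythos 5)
Your argument is essentially the paper's own proof: reduce to nonnegative symmetric $g$, use the Fourier slice theorem plus Plancherel in $v$ to convert the left-hand side into $\int_{\langle\omega\rangle^\perp}|\eta|^{-(n-3)+O(\varepsilon)}\,\big((gd\sigma)\ast(gd\sigma)(\eta)\big)^2\,d\lambda_\omega(\eta)$, then apply \eqref{e:conv} and polar coordinates on $\langle\omega\rangle^\perp$ so the powers of $|\eta|$ collapse to $t^{-1+O(\varepsilon)}$. The extra remarks on distributional justification and the harmless relabelling of $\varepsilon$ (your $4\varepsilon$ versus the paper's $2\varepsilon$) are fine and do not change the substance.
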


\begin{proof}
We again suppose $g$ is symmetric.
By Plancherel's theorem on $\langle \omega \rangle^\perp$, we have
\begin{align*}
\big\| (-\Delta_v)^{\frac12 (1- \frac{n-1}2) + \varepsilon } &X(  |\widehat{gd\sigma}|^2) \big\|_{L^q_\omega L^2_v(\mathcal{M}_{1,n})}^q \\
=&
\int_{\mathbb{S}^{n-1}} \Bigg( \int_{\langle \omega \rangle^\perp} |\eta|^{-(n-3) + 2\varepsilon } (gd\sigma)  \ast  (gd\sigma ) (\eta)^2\, d\lambda_\omega(\eta) \Bigg)^\frac q2\, d\sigma(\omega).
\end{align*}
Using polar coordinates on $\langle \omega \rangle^\perp$ and the identity \eqref{e:conv}, we conclude that
\begin{align*}
\big\| (-\Delta_v)^{\frac12 (1- \frac{n-1}2) + \varepsilon }& X(  |\widehat{gd\sigma}|^2) \big\|_{L^q_\omega L^2_v(\mathcal{M}_{1,n})}^q \\
\sim &
\int_{\mathbb{S}^{n-1}} \Bigg( \int_{\mathbb{S}^{n-2}_\omega} \int_{0}^\infty t^{-(n-3)+ 2\varepsilon}  (gd\sigma)  \ast  (gd\sigma ) (t u)^2\, t^{n-2}dtd\sigma_{\mathbb{S}^{n-2}_\omega}(u) \Bigg)^\frac q2\, d\sigma(\omega) \\
\sim &
\int_{\mathbb{S}^{n-1}} \Bigg( \int_{\mathbb{S}^{n-2}_\omega} \int_{0}^1 BA_{t}(g,g)(u)^2\, t^{-1+2\varepsilon}dt d\sigma_{\mathbb{S}^{n-2}_\omega}(u) \Bigg)^\frac q2\, d\sigma(\omega).
\end{align*}
\end{proof}
\begin{proof}[Proof of Theorem \ref{t:wMizTakHigh}]
We begin with the sufficiency of the condition $p\geq p_n$. By Lemma \ref{l:Reduce} it suffices to show that
\begin{equation}\label{e:Goal24Nov}
\int_{\mathbb{S}^{n-1}} \int_0^1 BA_t(g,g)(\omega)^2 t^{-1+2\varepsilon}\, dtd\sigma(\omega) \lesssim_\varepsilon \|g\|_{p_n}^4,
\end{equation}
since $$\int_{\mathbb{S}^{n-1}}\int_{\mathbb{S}^{n-2}_\omega} F(u)\, d\sigma_{\mathbb{S}^{n-2}_\omega}(u)d\sigma(\omega) = \int_{\mathbb{S}^{n-1}} F(\omega)\, d\sigma(\omega).$$
By the Cauchy--Schwarz inequality,
$$
BA_t(g,g)(\omega) \le A_t(g^2)(\omega)^\frac12 A_{-t}(g^2)(\omega)^\frac12,
$$
and so, by a further use of the Cauchy--Schwarz inequality,
$$
\int_{\mathbb{S}^{n-1}} BA_t(g,g)(\omega)^2 \, d\sigma(\omega)
\lesssim
\int_{\mathbb{S}^{n-1}} A_t(g^2)(\omega)^2\, d\sigma(\omega).
$$
Next we recall from Lemma \ref{Chrot} that there is a $t_n>0$ such that
$$
\sup_{0<t<t_n} \|A_tg\|_{L^n(\mathbb{S}^{n-1})} \lesssim \|g\|_{\frac{n}{n-1}}.
$$
Further, it is straightforward to verify that for $n\geq 3$,
$$
\|A_tg\|_{L^1(\mathbb{S}^{n-1})} \lesssim \|g\|_1,
$$
uniformly in $0<t<1$.
Interpolating these two estimates, we have
$$
\sup_{0<t<t_n} \|A_tg\|_{L^2(\mathbb{S}^{n-1})} \lesssim \|g\|_{\frac{2(n-1)}{2n-3}},
$$
and so
$$
\sup_{0<t<t_n} \| BA_t(g,g)\|_{L^2(\mathbb{S}^{n-1})} \lesssim \|g\|_{p_n}^2.
$$
Hence by splitting the integral in \eqref{e:Goal24Nov}, we have
\begin{align*}
\int_{\mathbb{S}^{n-1}} \int_0^1 BA_t(g,g)&(\omega)^2 t^{-1+2\varepsilon}\, dtd\sigma(\omega)\\
&\lesssim
\int_0^{t_n} \|g\|_{p_n}^4t^{-1+2\varepsilon}\, dt + \int_{\mathbb{S}^{n-1}} \int_{t_n}^1 BA_t(g,g)(\omega)^2 t^{-1+2\varepsilon}\, dtd\sigma(\omega) \\
&\lesssim_\varepsilon \|g\|_{p_n}^4 + \int_{\mathbb{S}^{n-1}} \int_{0}^1 BA_t(g,g)(\omega)^2 \, dtd\sigma(\omega).
\end{align*}
To bound the second term above, we write
\begin{align*}
\int_{\mathbb{S}^{n-1}} \int_{0}^1 BA_t&(g_1,g_2)(\omega)^2 \, dtd\sigma(\omega) \\
=&
\int_{\mathbb{S}^{n-1}} \int_{\mathbb{S}^{n-1}\times \mathbb{S}^{n-1}} g_1(\xi) \widetilde{g_2}(R_\omega(\xi)) g_1(\eta) \widetilde{g_2}(R_\omega(\eta)) \delta((\xi -\eta)\cdot \omega )  \, d\sigma(\xi)d\sigma(\eta)d\sigma(\omega) \\
\lesssim&
\|g_2\|_\infty^2 \int_{\mathbb{S}^{n-1}\times \mathbb{S}^{n-1}} g_1(\xi) g_1(\eta)  |\xi-\eta|^{-1}  \, d\sigma(\xi)d\sigma(\eta) .
\end{align*}
Applying the Hardy--Littlewood--Sobolev inequality on the sphere (see for instance \cite{LiebLoss}), we obtain
$$
\int_{\mathbb{S}^{n-1}} \int_{0}^1 BA_t(g_1,g_2)(\omega)^2 \, dtd\sigma(\omega)
\lesssim
\|g_1\|_{\frac{p_n}{2}}^2\|g_2\|_{\infty}^2.
$$
Using the symmetry property
$
BA_t(g_1,g_2) = BA_t(g_2,g_1),
$
and bilinear interpolation, we conclude that
$$
\int_{\mathbb{S}^{n-1}} \int_{0}^1 BA_t(g_1,g_2)(\omega)^2 \, dtd\sigma(\omega)
\lesssim
\|g_1\|_{p_n}^2\|g_2\|_{p_n}^2,
$$
as required.

Finally we turn to the necessity of $p\ge p_n$. In view of Lemma \ref{l:Reduce}, it suffices to consider necessary conditions for the estimate
\begin{equation}\label{e:Red24Nov}
\int_{\mathbb{S}^{n-1}} \Bigg( \int_{\mathbb{S}^{n-2}_\omega} \int_{0}^1 BA_{t}(g,g)(u)^2\, t^{-1 + 2\varepsilon}dt d\sigma_{\mathbb{S}^{n-2}_\omega}(u) \Bigg)^\frac 12\, d\sigma(\omega) \lesssim_\varepsilon \|g\|_p^2,
\end{equation}
where $\varepsilon>0$ is arbitrary small. Applying this to the function $g = \1_{|(\xi_1,\ldots,\xi_{n-1})|\le \delta}$ we have
$$
\int_{\mathbb{S}^{n-1}} \Bigg( \int_{\mathbb{S}^{n-2}_\omega} \int_{0}^1 BA_{t}( \1_{|(\xi_1,\ldots,\xi_{n-1})|\le \delta}, \1_{|(\xi_1,\ldots,\xi_{n-1})|\le \delta})(u)^2\, t^{-1 + 2\varepsilon}dt d\sigma_{\mathbb{S}^{n-2}_\omega}(u) \Bigg)^\frac 12\, d\sigma(\omega) \lesssim_\varepsilon \delta^\frac{2(n-1)}p.
$$
Next we observe that for all $0<t<\delta$,
$$
BA_{t}( \1_{|(\xi_1,\ldots,\xi_{n-1})|\le \delta}, \1_{|(\xi_1,\ldots,\xi_{n-1})|\le \delta})(u) \sim A_0(\1_{|(\xi_1,\ldots,\xi_{n-1})|\le \delta}) (u) \sim \delta^{n-2} \1_{E_\delta}(u),
$$
where $E_\delta = \{ \xi \in\mathbb{S}^{n-1}:|\xi_n|\le \delta/10\}$. Hence the left hand side of \eqref{e:Red24Nov} is bounded from below by
$$
\delta^{n-2}\int_{\mathbb{S}^{n-1}} \Bigg( \int_{\mathbb{S}^{n-2}_\omega} \int_{0}^\delta \1_{E_\delta}(u)\, t^{-1 + 2\varepsilon}dt d\sigma_{\mathbb{S}^{n-2}_\omega}(u) \Bigg)^\frac 12\, d\sigma(\omega)
\sim \delta^{n-2} \delta^\varepsilon \int_{\mathbb{S}^{n-1}} \sigma_{\mathbb{S}^{n-2}_\omega}(E_\delta)^\frac12\, d\sigma(\omega).
$$
Since $n\ge3$, a simple geometrical observation reveals that
$$
\sigma_{\mathbb{S}^{n-2}_\omega}(E_\delta) \gtrsim \delta
$$
uniformly in $\omega\in\mathbb{S}^{n-1}$. This gives a lower bound of $\delta^{n-2 + \varepsilon} \delta^{1/2}$ for the left-hand side of \eqref{e:Red24Nov}. This implies that $\delta^{(n-3)/2 + \varepsilon} \lesssim \delta^{2(n-1)/p} $ for all $\delta$, and so $1/p \le 1/p_n + \varepsilon/(2n-2)$ for all $\varepsilon>0$, from which the necessity of $p\geq p_n$ follows.
\end{proof}

\section*{Appendix: endpoint Bloom--Sampson estimates}\label{Sec7}
Here we consider the validity of inequalities of the form
\begin{equation}\label{e:power-weight}
\big\|  \widehat{gd\sigma} \langle \cdot \rangle^{-\gamma} \big\|_{L^q(\mathbb{R}^n)} \lesssim \| g \|_{L^p(\mathbb{S}^{n-1})},
\end{equation}
and their Lorentz space variants; here
$p,q\ge1$ and $\gamma \in \mathbb{R}$. 
In particular we prove a general result, which upon specialising to $n=3$, implies Proposition \ref{sufprop}.
Of course when $\gamma=0$, this problem becomes the classical restriction problem \eqref{restconj}, and so a complete understanding of \eqref{e:power-weight} is not currently expected. However, if one restricts attention to $p\le 2$, then the complexity essentially amounts to that of the classical Stein-Tomas restriction theorem and the trace lemma. This was largely clarified by Bloom and Sampson in \cite{BS92}. Following their notation we distinguish the points
\[
A= \left(\tfrac12,\tfrac12\right),\;\;\; B=\left(\tfrac12,\tfrac{n-1}{2(n+1)}\right),\;\;\;C=\left(\tfrac12,0\right),\;\;\;D=(1,0),\;\;\;E=\left(1,\tfrac12\right)
\]
in $(\tfrac{1}{p},\tfrac{1}{q})$ space, noting that $A$ with $\gamma>1$ essentially corresponds to the trace lemma, and $B$ with $\gamma=0$ corresponds to the Stein--Tomas restriction theorem.
\begin{theorem}[\cite{BS92}]\label{t:BS}
Let $n\ge2$.
\begin{enumerate}
\item
If $(\frac1p,\frac1q) = A$, then \eqref{e:power-weight} holds if and only if $\gamma >\frac1q$.
\item
If $(\frac1p,\frac1q) \in {\rm int}\, BCD \cup (B,C)$, then \eqref{e:power-weight} holds if and only if $\gamma \ge 0$.
\item
If $(\frac1p,\frac1q) \in {\rm int}\, ADE \cup (A,E)$, then \eqref{e:power-weight} holds if and only if
$$
\gamma \ge \frac{n}{q} - \frac {n-1}{p'}.
$$
\item
If $(\frac1p,\frac1q)\in ABD \setminus \{A\}$, then \eqref{e:power-weight} holds if
$$
\gamma > \frac{n+1}{2q} - \frac{n-1}{2p'}
$$
and only if
$$
\gamma \ge \frac{n+1}{2q} - \frac{n-1}{2p'}.
$$
\end{enumerate}
\end{theorem}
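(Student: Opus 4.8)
The plan is to obtain the whole range of exponents by interpolating from a short list of endpoint (``vertex'') estimates, and to extract the necessity statements from three explicit families of test functions. Since $\langle x\rangle\ge1$, increasing $\gamma$ only diminishes the weight $\langle\cdot\rangle^{-\gamma}$ and hence the left-hand side of \eqref{e:power-weight}, so in each of the regions (2)--(4) it is enough to settle the critical (boundary) value of $\gamma$. The vertex estimates I would record are: the trivial bound $\|\widehat{gd\sigma}\|_{L^\infty(\mathbb{R}^n)}\le\|g\|_{L^1(\mathbb{S}^{n-1})}\lesssim\|g\|_{L^2(\mathbb{S}^{n-1})}$, which covers $D=(1,0)$ and $C=(\tfrac12,0)$ with $\gamma=0$; the Stein--Tomas inequality $\|\widehat{gd\sigma}\|_{L^{2(n+1)/(n-1)}(\mathbb{R}^n)}\lesssim\|g\|_{L^2(\mathbb{S}^{n-1})}$, that is, the point $B$ with $\gamma=0$; the sharp trace (Agmon--H\"ormander) estimate at $A$, both in strong form $\|\langle\cdot\rangle^{-\gamma}\widehat{gd\sigma}\|_{L^2(\mathbb{R}^n)}\lesssim\|g\|_{L^2(\mathbb{S}^{n-1})}$ valid for every $\gamma>\tfrac12$ and in its weak-type endpoint at $\gamma=\tfrac12$; and the elementary endpoint $\|\langle\cdot\rangle^{-n/2}\widehat{gd\sigma}\|_{L^{2,\infty}(\mathbb{R}^n)}\lesssim\|g\|_{L^1(\mathbb{S}^{n-1})}$ at $E=(1,\tfrac12)$ (from $|\widehat{gd\sigma}|\le\|g\|_{L^1}$, $\langle\cdot\rangle^{-n/2}\in L^{2,\infty}(\mathbb{R}^n)$, and H\"older's inequality in Lorentz spaces).

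Next I would fill the three triangles by interpolation, checking in each case that the affine function taking the prescribed values of $\gamma$ at the three vertices agrees with the stated threshold. In $BCD$ one interpolates the bounds at $C$ and $D$ with Stein--Tomas at $B$, all at $\gamma=0$, getting \eqref{e:power-weight} with $\gamma=0$ on $\mathrm{int}\,BCD\cup(B,C)$, hence for any $\gamma\ge0$; this is (2). In $ADE$ one interpolates the weak-type trace estimate at $A$ ($\gamma=\tfrac12$), the trivial bound at $D$ ($\gamma=0$) and the bound at $E$ ($\gamma=\tfrac n2$); a short barycentric computation identifies the interpolated $\gamma$ with $\tfrac nq-\tfrac{n-1}{p'}$, and real interpolation upgrades the weak-type inputs to strong-type conclusions on the open interior (indeed conclusions in $L^{q,2}$, which is what Proposition \ref{sufprop} ultimately needs); this gives (3). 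In $ABD$ one interpolates the trace estimate at $A$ in its strong form ($\gamma=\tfrac12+\varepsilon$) with Stein--Tomas at $B$ and the trivial bound at $D$, producing the weight $\langle\cdot\rangle^{-(\tfrac{n+1}{2q}-\tfrac{n-1}{2p'})-\theta\varepsilon}$; letting $\varepsilon\downarrow0$ yields the sufficiency half ``$\gamma>\tfrac{n+1}{2q}-\tfrac{n-1}{2p'}$'' of (4). Part (1) is just the strong/weak dichotomy of the trace estimate at $A$.

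For necessity I would test \eqref{e:power-weight} against: (i) $g\equiv1$, where $|\widehat\sigma(x)|\sim|x|^{-(n-1)/2}$ on a set of positive density forces $\gamma\ge\tfrac nq-\tfrac{n-1}{2}$, the logarithmic divergence pushing this to $\gamma>\tfrac12$ at $A$; (ii) a Knapp cap $g=\1_{\{|\xi'|\le\delta\}}$ (two caps of aperture $\delta$ about $\pm e_n$), for which $|\widehat{gd\sigma}|\gtrsim\delta^{n-1}$ on a fixed fraction of the dual plank $\{|x'|\lesssim\delta^{-1}\}\times\{|x_n|\lesssim\delta^{-2}\}$, so that $\delta\downarrow0$ forces $\gamma\ge\tfrac{n+1}{2q}-\tfrac{n-1}{2p'}$; and (iii) the same Knapp cap evaluated on the ball $\{|x|\lesssim\delta^{-1}\}$ (again $|\widehat{gd\sigma}|\gtrsim\delta^{n-1}$ on a fixed fraction), which forces $\gamma\ge\tfrac nq-\tfrac{n-1}{p'}$. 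In each region the maximum of the pertinent lower bounds matches the stated threshold.

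The main point, and the part that goes beyond \cite{BS92}, is securing the \emph{non-strict} endpoint (i.e.\ $\gamma$ exactly at the critical value) in (2)--(3): this requires the weak-type Lorentz-space forms of the trace estimate at $A$ and of the trivial bound at $E$, together with a real- (rather than complex-) interpolation argument in the Lorentz category that keeps the second index under control. The interior of $ABD$ is the one region where the approach stops short, since the only endpoint available along its lower edge is Stein--Tomas, which is sharp and admits no weak-type improvement; this is why (4) is stated with a strict inequality.
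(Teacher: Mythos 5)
You should first note that the paper does not actually prove Theorem \ref{t:BS}: it is quoted from \cite{BS92}, and the paper's own contribution (the appendix, Theorem \ref{p:w-rest}) is the complementary critical-exponent result on $ABD$. Judged on its own terms, your sketch is fine where only strong-type inputs and non-critical exponents are involved: part (1), the sufficiency in part (2) (log-convexity of $L^q$ norms between Stein--Tomas, $C$ and $D$), and the strict half of part (4) via analytic interpolation of the trace bound at $A$ ($\gamma>\tfrac12$) with Stein--Tomas and the trivial $L^1\to L^\infty$ bound. The necessity claims for parts (1), (3), (4) also do follow from your three test families.

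There are, however, two genuine gaps. First, the necessity of $\gamma\ge0$ in part (2) is not implied by your examples: inside ${\rm int}\,BCD$ the thresholds they produce, namely $\tfrac nq-\tfrac{n-1}{2}$, $\tfrac{n+1}{2q}-\tfrac{n-1}{2p'}$ and $\tfrac nq-\tfrac{n-1}{p'}$, are all strictly negative, so the ``maximum of the pertinent lower bounds'' is negative, not $0$. One needs an additional argument exploiting modulation invariance, e.g.\ replace $g$ by $e^{-ix_0\cdot\xi}g(\xi)$ so that the Knapp plank is translated to distance $R$; if $\gamma<0$, inequality \eqref{e:power-weight} would force $\|\widehat{gd\sigma}\|_{L^q(|x|\sim R)}\lesssim R^{\gamma}\|g\|_p\to0$ uniformly in $g$, which the translated plank contradicts. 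Second, and more seriously, your route to the critical (non-strict) cases --- the strong-type bound in part (3) at $\gamma=\tfrac nq-\tfrac{n-1}{p'}$ and the Lorentz refinements --- interpolates endpoint estimates carrying \emph{different} weights $\langle\cdot\rangle^{-\gamma}$ at the different vertices. Real interpolation does not apply across a change of target measure, while the analytic-family (complex) interpolation that does accommodate a varying $\gamma$ destroys weak-type endpoints; so the barycentric computation is not a proof. The standard repair, and the one the paper's appendix actually implements for the $ABD$ endpoint, is to fix $\gamma$, prove restricted weak-type bounds at the ends of the constant-weight line $\ell(\gamma)$ by decomposing into dyadic annuli, using the (radial) weighted $L^2$ bound on each annulus, and summing with a Bourgain-type lemma (Lee--Seo), and only then real-interpolate along $\ell(\gamma)$, where the weight does not change. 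Relatedly, your asserted weak-type trace estimate at $A$, $\|\langle\cdot\rangle^{-1/2}\widehat{gd\sigma}\|_{L^{2,\infty}}\lesssim\|g\|_{L^2}$, is a nontrivial endpoint stated without proof: it does not follow from Agmon--H\"ormander by summing annuli (each dyadic piece already has weak norm comparable to $\|g\|_2$, and weak norms of disjointly supported pieces do not sum favourably), and it is telling that Theorem \ref{p:w-rest} deliberately stops at restricted weak type on the open segments $(A,B)\cup(A,D)$ rather than at the vertex $A$ itself.
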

In the above theorem the case $(\frac1p,\frac1q)\in ABD \setminus \{A\}$ with the critical power $\gamma = \tfrac{n+1}{2q} - \tfrac{n-1}{2p'}$ is clearly missing. Our main result in this section addresses this. In particular we establish this critical estimate on the interior of $ABD$, and prove a restricted weak type estimate on $(A,B) \cup (A,D)$. Our results are phrased in terms of the classical Lorentz spaces $L^{q,r}$, $0 < q,r \le \infty$.
\begin{theorem}\label{p:w-rest}
Let $n\ge 2$.
\begin{enumerate}
\item
If $(\frac1p,\frac1q) \in {\rm int}\, ABD \cup [B,D]$, then for all $r\in [1,\infty]$,
\begin{equation}\label{e:LorentzImproved}
\big\|  \widehat{gd\sigma} \langle \cdot \rangle^{-\gamma} \big\|_{L^{q,r}(\mathbb{R}^n)} \lesssim \| g \|_{L^{p,r}(\mathbb{S}^{n-1})}
\end{equation}
holds with
\begin{equation}\label{e:gamma}
\gamma = \frac{n+1}{2q} - \frac{n-1}{2p'}.
\end{equation}
In particular, \eqref{e:power-weight} holds with the same exponents.
\item
If $(\frac1p,\frac1q) \in (A,B) \cup (A,D)$, then
\begin{equation}\label{e:RestWeak}
\big\|  \widehat{gd\sigma} \langle \cdot \rangle^{-\gamma} \big\|_{L^{q,\infty}(\mathbb{R}^n)} \lesssim \| g \|_{L^{p,1}(\mathbb{S}^{n-1})}
\end{equation}
holds with $\gamma$ given by \eqref{e:gamma}.
\end{enumerate}
\end{theorem}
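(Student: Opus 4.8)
The plan is to combine a dyadic decomposition of $\mathbb{R}^n$ into annuli with localized (i.e.\ $R^\varepsilon$-free, truncated) restriction estimates, reserving real interpolation for the passage to the Lorentz endpoints. First I would dispose of the edge $[B,D]$: there $\gamma$ in \eqref{e:gamma} vanishes identically, so \eqref{e:LorentzImproved} restricted to $[B,D]$ is nothing but real interpolation of the Stein--Tomas inequality $\|\widehat{gd\sigma}\|_{L^{q_{ST}}(\mathbb{R}^n)}\lesssim\|g\|_{L^2(\mathbb{S}^{n-1})}$ (the point $B$, with $q_{ST}=\tfrac{2(n+1)}{n-1}$) against the trivial bound $\|\widehat{gd\sigma}\|_{L^\infty}\le\|g\|_{L^1}$ (the point $D$); real interpolation automatically produces the diagonal Lorentz refinement $L^{p,r}\to L^{q,r}$.

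For the triangle $ABD$ I would next record the localized estimate
\[
\big\|\widehat{gd\sigma}\big\|_{L^q(\{|x|\sim 2^j\})}\lesssim 2^{j\gamma}\,\|g\|_{L^p(\mathbb{S}^{n-1})},\qquad \gamma=\tfrac{n+1}{2q}-\tfrac{n-1}{2p'},
\]
valid for $(\tfrac1p,\tfrac1q)\in\overline{ABD}$ with $p\le 2$. This is again an interpolation statement once it is known at the corners: at $B$ and $D$ it is Stein--Tomas and the trivial bound with $\gamma=0$, and at $A$ it is the elementary local smoothing bound $\|\widehat{gd\sigma}\|_{L^2(B_R)}\lesssim R^{1/2}\|g\|_{L^2(\mathbb{S}^{n-1})}$, which follows from Plancherel applied to a Schwartz truncation together with a Schur estimate using $\int_{\mathbb{S}^{n-1}}(1+R|\xi-\eta|)^{-N}\,d\sigma(\eta)\lesssim R^{-1}$. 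Since $\langle x\rangle^{-\gamma}\sim 2^{-j\gamma}$ on $\{|x|\sim 2^j\}$, each annulus contributes $\lesssim\|g\|_{L^p}$ to \eqref{e:power-weight}, so for $\gamma$ strictly larger than \eqref{e:gamma} the geometric series converges and one recovers the Bloom--Sampson strong-type bound. At the \emph{critical} value \eqref{e:gamma} the bare triangle inequality loses a logarithm, and that loss is what dictates the shapes of \eqref{e:LorentzImproved} and \eqref{e:RestWeak}.

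To run the argument at $\gamma$ equal to \eqref{e:gamma} I would pass to distribution functions: for $g=\1_E$ the set $\{\,|\widehat{gd\sigma}|>\lambda\langle x\rangle^{\gamma}\,\}$ meets $\{|x|\sim 2^j\}$ only when $2^{-j\gamma}\|g\|_{L^1}\gtrsim\lambda$, hence for $O(\log(\sigma(E)/\lambda))$ values of $j$, and on each such annulus the localized estimate bounds the bad set by $\lambda^{-q}\sigma(E)^{q/p}$; a crude summation thus gives weak-$L^q$ up to a logarithm, and the logarithm is removed by decomposing $g$ into spherical caps and using that the localized estimate at a fixed annulus scale $2^j$ can be saturated only by caps of one specific radius, so the $\log$-many contributing scales cannot all be critical for the same $g$. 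This yields the restricted weak type $L^{p,1}\to L^{q,\infty}$ of \eqref{e:RestWeak} (the $L^{p,1}$ on the domain being the Lorentz price of killing the logarithm), and it produces such an estimate throughout the closed triangle $\overline{ABD}$. To upgrade to the \emph{strong}-type Lorentz bounds \eqref{e:LorentzImproved} on $\mathrm{int}\,ABD$ I would then interpolate these restricted-weak-type estimates against the strong-type bounds already proved on $[B,D]$, along interpolation segments non-degenerate in both $p$ and $q$, so that the standard Marcinkiewicz upgrade from restricted weak type to strong type applies and delivers $L^{p,r}\to L^{q,r}$ for every $r\in[1,\infty]$; along $(A,B)$, which has $p\equiv 2$, no such upgrade is available, consistently with \eqref{e:RestWeak} being stated there only in restricted weak form.

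The hard part will be the critical-line step itself. The exponent \eqref{e:gamma} coincides at $A$ with $1/q=1/2$, placing one simultaneously at the endpoint of the trace lemma and of the relevant Stein--Weiss fractional-integration inequality, so the logarithmic divergence in the dyadic sum is genuine and removing it — by the cap decomposition, with the attendant loss of one Lorentz index on the domain — is the technical heart of the matter. An alternative route for this step, which I would pursue in parallel, is to phrase it as a restricted-weak-type Stein--Weiss bound for the kernel $\widehat{d\sigma}(x-y)\langle x\rangle^{-1/2}\langle y\rangle^{-1/2}$ arising from $TT^*$ at $A$, invoking $|\widehat{d\sigma}(z)|\lesssim\langle z\rangle^{-(n-1)/2}$ and the known endpoint behaviour of weighted fractional integrals.
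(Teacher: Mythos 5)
Your overall architecture (dyadic annuli, a uniform localized estimate on each annulus obtained by interpolating Stein--Tomas at $B$, the trivial bound at $D$ and the trace-type bound $\|\widehat{gd\sigma}\|_{L^2(B_R)}\lesssim R^{1/2}\|g\|_{L^2}$ at $A$, then a Lorentz-space upgrade by real interpolation) is the same skeleton as the paper's; in particular your corner-$A$ input is equivalent to what the paper extracts from the radially-weighted Mizohata--Takeuchi estimate, and your annulus bound is exactly its inequality \eqref{e:local}. The genuine gap is the critical summation step, which you yourself flag as ``the technical heart'' but do not actually supply. Your log-counting with $g=\mathbf{1}_E$ correctly yields weak type with a factor $\log(\sigma(E)/\lambda)$, but the proposed removal of the logarithm --- ``decompose $g$ into spherical caps; the localized estimate at a fixed annulus scale can be saturated only by caps of one specific radius, so the $\log$-many scales cannot all be critical'' --- is a heuristic, not an argument: restricted weak type must be proved for arbitrary measurable $E$, a cap decomposition of $\mathbf{1}_E$ produces contributions at all cap scales simultaneously, and no quasi-orthogonality across annuli is established. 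The paper closes precisely this gap with Bourgain's summation trick in the form of Lemma 2.3 of Lee--Seo: it applies the localized estimate at two exponents $q_0<q_*<q_1$, so that the pieces $f_j=2^{-j/q_*}\widehat{gd\sigma}\,\mathbf{1}_{\mathcal{A}_j}$ obey $\|f_j\|_{L^{q_i}}\lesssim 2^{j(1/q_i-1/q_*)}\|g\|_{L^{q_i'}}$ with geometric decay on either side of $q_*$, and the lemma converts this directly into the $L^{p,1}\to L^{q_*,\infty}$ bound with no logarithm. If you want to complete your write-up, this (or an equivalent two-exponent optimization over the splitting point in $j$) is what has to replace the cap heuristic.

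A second, smaller misstep is the upgrade to strong Lorentz type on ${\rm int}\,ABD$: you propose to interpolate the critical restricted weak type estimates ``against the strong-type bounds already proved on $[B,D]$'', but those bounds concern a different operator (weight $\gamma=0$), and if instead you fix the weight $\gamma_*$ then the points you reach by interpolating a critical point with a point near $[B,D]$ lie strictly below the critical line $\ell(\gamma_*)$, where the weight is supercritical --- so this does not produce \eqref{e:LorentzImproved} at critical $\gamma$. The correct move, and the paper's, is to fix $\gamma_*\in(0,\tfrac12)$, observe that $T_{\gamma_*}g=\widehat{gd\sigma}\langle\cdot\rangle^{-\gamma_*}$ satisfies restricted weak type at \emph{every} point of $\ell(\gamma_*)$ (its endpoints lying on $(A,B)$ and $(A,D)$), and really interpolate between two such points of the same line; since $p$ and $q$ both vary along $\ell(\gamma_*)$, this yields $L^{p,r}\to L^{q,r}$ for all $r$ at the interior points, which sweep out ${\rm int}\,ABD$ as $\gamma_*$ varies. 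Finally, note that the paper claims restricted weak type only on $ABD\setminus\{A\}$; your assertion that it holds on the closed triangle, hence at $A$ itself, is an overclaim not supported by your argument (and at $A$ the Bloom--Sampson necessary condition $\gamma>1/2$ signals that the critical estimate is delicate at best).
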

Some brief remarks are in order. First of all, for the purposes of deducing Proposition \ref{sufprop} it suffices to choose $r=2$ in \eqref{e:LorentzImproved} and use the embedding $L^p\subset L^{p,2}$, which holds as long as $p\le2$.
Second, if $(\frac1p,\frac1q) \in [B,D]$ then the resulting estimate is a consequence of the Stein--Tomas restriction theorem, and so we may restrict our attention to the region $ABD \setminus [B,D]$. Finally, setting $r=q$ in \eqref{e:LorentzImproved} and using the embedding $L^{p} \subset L^{p,q}$, which holds whenever $p\leq q$, yields \eqref{e:power-weight} on ${\rm int}\, ABD \cup [B,D]$ with the critical power \eqref{e:gamma}.

\begin{proof}[Proof of Theorem \ref{p:w-rest}] It is convenient to begin with Part {\it (2)}.
We first prove \eqref{e:RestWeak} for $(\frac1p,\frac1q) \in (A, D)$, where $q=p'$. Our goal is therefore to show that
\begin{equation}\label{e:0801-1}
\big\| \widehat{gd\sigma} \langle \cdot \rangle^{-\frac{1}{q}} \big\|_{L^{{q,\infty}}(\mathbb{R}^n)} \lesssim \| g \|_{L^{q',1}(\mathbb{S}^{n-1})}
\end{equation}
for $2 < q < \infty$.
From now on, we fix an arbitrary $q_*\in (2,\infty)$ and prove \eqref{e:0801-1} with $q=q_*$.
The first step is to write $$\mathbb{R}^n = \bigcup_{j=0}^\infty \mathcal{A}_j,$$ where
\[
\mathcal{A}_0 = B(0,1),\quad \mathcal{A}_j = B(0,2^{j+1}) \setminus B(0,2^j),
\]
and show that
\begin{equation}\label{e:local}
\big\| \widehat{gd\sigma} \langle \cdot \rangle^{-\frac1q} \1_{\mathcal{A}_j} \big\|_{L^{{q}}(\mathbb{R}^n)} \lesssim \| g \|_{L^{q'}(\mathbb{S}^{n-1})}
\end{equation}
for all $2\le {q} \le \infty$, uniformly in $j$. By analytic interpolation this will follow from the extreme cases $q=2$ and $q=\infty$. The latter follows immediately from the elementary estimate
$\| \widehat{gd\sigma} \|_\infty \lesssim \| g \|_1$. For $q=2$ we apply 
the weighted extension estimate \eqref{MizTak}, which is known for radial weights (see \cite{BRV,CS}),
with weight $w(x) = \langle x \rangle^{-1}\1_{\mathcal{A}_j}(x)$. This results in
\begin{align*}
\big\| \widehat{gd\sigma} \langle \cdot \rangle^{-\frac12} \1_{\mathcal{A}_j} \big\|_{L^{{2}}(\mathbb{R}^n)}
&\lesssim
\big\|X[\langle \cdot \rangle^{-1} \1_{\mathcal{A}_j}] \big\|_\infty \| g \|_2
\sim
2^{-j} \big\|X[\1_{\mathcal{A}_j}] \big\|_\infty \| g \|_2 \sim \| g \|_2,
\end{align*} uniformly in $j$, as required.
In order to use the estimates \eqref{e:local} to bound the sum in $j$, we use an argument of Bourgain \cite{Bourgain}, and in particular, Lemma 2.3 of Lee and Seo \cite{LeeSeo}.
Let us write
$$
\big\| \widehat{gd\sigma} \langle \cdot \rangle^{-\frac{1}{q_*}} \big\|_{L^{{q_*,\infty}}(\mathbb{R}^n)}  \sim \bigg\| \sum_{j} 2^{-\frac j{q_*}} \widehat{gd\sigma}   \1_{\mathcal{A}_j} \bigg\|_{L^{{q_*,\infty}}(\mathbb{R}^n)} =: \bigg\| \sum_j f_j \bigg\|_{L^{{q_*,\infty}}(\mathbb{R}^n)},
$$
and
choose $q_0, q_1$ satisfying $2< q_0 < q_* <q_1 < \infty$. By \eqref{e:local},
$$
\| f_j \|_{L^{q_i}(\mathbb{R}^n)} \sim 2^{j(\frac1{q_i} - \frac1{q_*})}  \big\| \widehat{gd\sigma} \langle \cdot \rangle^{-\frac1{q_i}} \1_{\mathcal{A}_j}  \big\|_{L^{q_i}(\mathbb{R}^n)} \lesssim 2^{j(\frac1{q_i} - \frac1{q_*})} \| g \|_{L^{q_i'}(\mathbb{S}^{n-1})},
$$
uniformly in $j$, for each $i=1,2$.
Since $\frac1q_1 - \frac1{q_*} < 0 < \frac1{q_0} - \frac1{q_*}$, by Lemma 2.3 of \cite{LeeSeo}, we conclude that
$$
\bigg\| \sum_j f_j \bigg\|_{L^{q_*,\infty}(\mathbb{R}^n)} \lesssim \| g \|_{L^{q_*,1}(\mathbb{S}^{n-1})}.
$$
This completes the proof of \eqref{e:0801-1} with $q=q_*$.

To complete the proof of Part {\it (2)}, we must also establish \eqref{e:RestWeak} on the segment $(A,B)$. However, this argument is similar to that for $(A,D)$ above, and so we leave the details to the reader.

We now turn to Part {\it (1)}. By Part {\it (2)} and complex interpolation, \eqref{e:RestWeak}  holds for all $(\frac1p,\frac1q) \in ABD\setminus\{A\}$ under the condition \eqref{e:gamma}. So our task is to improve \eqref{e:RestWeak} with respect to Lorentz exponents, and we do this using a real interpolation argument. For each $\gamma \in \mathbb{R}$ we define the line
$$
\ell(\gamma) = \left\{ (x,y) \in ABD: \gamma = \tfrac{n+1}{2}y - \tfrac{n-1}{2}(1-x) \right\}.
$$
Note  that \eqref{e:gamma} holds if $(\frac1p,\frac1q) \in \ell(\gamma)$, and that $\ell(0) = [B,D]$ and $\ell(\frac12) = \{A\}$. 
Fix a $\gamma_* \in (0,\frac12)$ and denote by  $T_{\gamma_*}$ the linear operator $g \mapsto \widehat{gd\sigma} \langle \cdot \rangle^{-\gamma_*}$. Since
\begin{equation}\label{e:goal0105}
\| T_{\gamma_*}g \|_{L^{q,\infty} (\mathbb{R}^n)} = \big\| \widehat{gd\sigma} \langle \cdot \rangle^{-\gamma_*}  \big\|_{L^{q,\infty}(\mathbb{R}^n)} \lesssim \| g \|_{L^{p,1}(\mathbb{S}^{n-1})}
\end{equation}
for all $(\frac1p,\frac1q) \in \ell(\gamma_*)$, real interpolation (see \cite{BerghLofstrom} for example) reveals that
$$
\| T_{\gamma_*}g \|_{L^{q,r}(\mathbb{R}^n)} \lesssim \| g \|_{L^{p,r}(\mathbb{S}^{n-1})}
$$
for all $(\frac1p,\frac1q) \in \ell(\gamma_*)$ and all $r\in [1,\infty]$. This establishes \eqref{e:LorentzImproved}. 
\end{proof}

\end{document}